\documentclass[pdflatex,sn-mathphys-num]{sn-jnl}

\usepackage{graphicx}%
\usepackage{multirow}%
\usepackage{amsmath,amssymb,amsfonts,mathtools}%
\usepackage{amsthm}%
\usepackage{mathrsfs}%
\usepackage[title]{appendix}%
\usepackage{xcolor}%
\usepackage{textcomp}%
\usepackage{manyfoot}%
\usepackage{booktabs}%
\usepackage{algorithm}%
\usepackage{algorithmicx}%
\usepackage{algpseudocode}%
\usepackage{listings}%
\usepackage{bm,mathtools}%

\mathtoolsset{showonlyrefs}

\theoremstyle{thmstyleone}%
\newtheorem{theorem}{Theorem}[section]
\newtheorem{proposition}[theorem]{Proposition}
\newtheorem{lemma}[theorem]{Lemma}

\theoremstyle{thmstyletwo}
\newtheorem{remark}[theorem]{Remark}

\theoremstyle{thmstylethree}
\newtheorem{definition}[theorem]{Definition}

\newcommand{\R}{\mathbb{R}}
\newcommand{\T}{\mathbb{T}}

\newcommand{\dd}{\,\mathrm{d}}

\begin{document}

\title[Optimal error estimates for half-way bounce-back]{Optimal error estimates for the half-way bounce-back lattice Boltzmann method for the Stokes equations}

\author*{\fnm{Kai} \sur{Koike}}\email{kaikoike.math@keio.jp}

\affil{\orgdiv{Department of Mathematics, Hiyoshi Campus},
  \orgname{Keio University},
  \orgaddress{\city{Yokohama 223-8521}, \country{Japan}}}

\abstract{%
We give a mathematical proof of the optimal convergence rates for the D2Q9 BGK lattice Boltzmann method with the half-way bounce-back rule for the incompressible Stokes equations in a flat channel. The convergence rates are second-order for the velocity and first-order for the pressure as the lattice spacing $h$ tends to zero, in agreement with formal analyses and numerical experiments, whereas the available rigorous convergence theorems only yield an $O(h^{1/2})$ bound for the velocity error. A key step in the proof is a decomposition of the leading boundary consistency error into macroscopic and kinetic components. These components are absorbed by suitably constructed Stokes and discrete Knudsen layer correctors, respectively. Incorporating these correctors into the prediction function used in previous rigorous analyses, we obtain a refined prediction function with consistency errors of sufficiently high order. Combined with the known weighted $L^2$-stability estimate, this gives the optimal convergence rates.
}

\keywords{Lattice Boltzmann method, bounce-back rule, boundary error, Stokes equations, Knudsen layer, convergence}

\pacs[MSC Classification]{65M12, 65M15, 76D07, 76M28}

\maketitle

\tableofcontents

\section{Introduction}\label{sec:intro}

The Lattice Boltzmann method (LBM) is a computational fluid dynamics method based on ideas from the kinetic theory of gases. The basic idea is to construct a discrete velocity kinetic model on lattice sites with simplified collision processes so that the macroscopic quantities approximate solutions to the target continuum fluid dynamics equations---which are the Stokes equations in this paper. Its simple implementation, scalability, and straightforward handling of complex geometries have attracted the attention of many practitioners. On the other hand, its indirect formulation makes its mathematical analysis less direct than that of conventional methods such as finite-difference or finite-volume schemes. The validity of the LBM relies on some discrete versions of the theory of fluid dynamic limit in the kinetic theory of gases; cf.~\cite{Sone2002}. The corresponding asymptotic theory for the LBM is usually carried out based on the Chapman--Enskog expansions. There is an extensive literature on formal asymptotic analyses for the LBM; we refer to the reviews~\cite{ChenDoolen1998,Succi2001,aidun2010lattice,LallemandLuoKrafczykYong2021} and the references therein. From a mathematical point of view, however, the use of Chapman--Enskog expansions as a rigorous approximation procedure involves delicate issues concerning regularity and truncation; see for example~\cite{SaintRaymond2013}. Rigorous convergence analyses therefore require quantitative control of approximation errors beyond the formal asymptotic expansions.

Despite these difficulties, rigorous convergence results for the LBM and closely related schemes have been obtained in several different settings. Elton, Levermore, and Rodrigue~\cite{EltonLevermoreRodrigue1995} obtained convergence theorems for LBM schemes to convection--diffusion equations using truncated Hilbert expansions. Dellacherie proved convergence theorems for LBM schemes targeting 1D convection--diffusion equations based on equivalence to finite-difference type schemes and discrete maximum principles~\cite{dellacherie2014construction}. For the finite-difference formulation of the LBM, we also refer to the work by Junk~\cite{Junk_finite} and Bellotti, Graille, and Massot~\cite{BellottiGrailleMassot2022,Bellotti2023Truncation}. Caetano, Dubois, and Graille~\cite{CaetanoDuboisGraille2024}, Aregba-Driollet~\cite{AregbaDriollet2024}, and Aregba-Driollet and Bellotti~\cite{AregbaDriolletBellotti2026} proved convergence of LBM schemes to entropy solutions of hyperbolic conservation laws. These results skillfully use structures specific to their respective regimes, such as equivalence to finite-difference schemes, discrete maximum principles, monotonicity, or entropy inequalities. For incompressible flow, Junk and Yong proved convergence theorems for a continuous-in-space LBM scheme for the Navier--Stokes equations~\cite{JunkYong2003}. Subsequently, Junk and Yang~\cite{JunkYang2008,JunkYang2009} showed convergence theorems for LBM schemes for the Stokes and the Navier--Stokes equations based on the asymptotic analyses by Junk, Klar, and Luo~\cite{junk2005asymptotic}, Junk and Yang~\cite{JunkYang2005}, and the $L^2$-stability estimate of Junk and Yong~\cite{JunkYong2009}. These works for incompressible flows utilize truncated Hilbert expansions as in~\cite{EltonLevermoreRodrigue1995} and are most directly relevant to our analysis in this paper.

A central ingredient in the analysis for incompressible flows cited above is the \emph{prediction function}. Starting from the solution $(u,p)$ to the target Stokes or Navier--Stokes equations, based on a formal Hilbert-type expansion analyzed in~\cite{junk2005asymptotic,JunkYang2005}, they construct a distribution function
\[
    \hat f_h
    =
    \sum_{k=0}^{5}h^k f^{(k)},
\]
where $h$ is the lattice spacing. The prediction function can be inserted into the LBM scheme, so that its consistency error in the interior and on the boundary can be estimated by Taylor expansion. Combining these consistency estimates with the weighted $L^2$-stability of~\cite{JunkYong2009}, Junk and Yang~\cite{JunkYang2008,JunkYang2009} obtained rigorous convergence results both on periodic domains and, with the bounce-back rule, on bounded domains. In the periodic case, their estimate gives the optimal second-order convergence for the velocity and first-order convergence for the pressure.

The situation is considerably more subtle in the presence of a boundary. The bounce-back rule is one of the simplest and most widely used boundary treatments for the LBM. When the physical wall is located half a lattice spacing away from the nearest layer of lattice sites---the so-called half-way configuration---formal asymptotic analyses and numerical experiments predict second-order convergence for the velocity and first-order convergence for the pressure to solutions to Stokes or Navier--Stokes equations with the homogeneous Dirichlet boundary conditions; see~\cite{ChenDoolen1998,Succi2001,aidun2010lattice,LallemandLuoKrafczykYong2021} and~\cite{JunkYang2005}. By contrast, the rigorous convergence estimates of Junk and Yang~\cite{JunkYang2008,JunkYang2009} yield only an $O(h^{1/2})$ velocity error in the half-way configuration, and the convergence is not proved for the pressure. The origin of this loss is the fact that the consistency error of the prediction function $\hat{f}_h$ for the bounce-back rule is $O(h^3)$; after taking the weighted $L^2$-norm on the boundary and summing over $O(h^{-2})$ time steps (due to diffusive time scaling), one obtains only an $O(h^{3/2})$ bound for the $L^2$-norm of the difference between the prediction function $\hat{f}_h$ and the actual LBM solution. And after taking the velocity and pressure moments, they become $O(h^{1/2})$ and $O(h^{-1/2})$, respectively. Junk and Yang already observed that these estimates are likely to be
too coarse~\cite{JunkYang2008,JunkYang2009}.

Concerning the issue above on boundary errors, the work of Wei\ss~\cite{weiss2006numerical} is particularly relevant to us. He studied a one-dimensional Goldstein--Taylor-type LBM scheme targeting the heat equation on a bounded interval. For several boundary conditions, he established stability and second-order convergence of the macroscopic density. This provides an early example in which a sharp convergence rate on a bounded domain is obtained for a LBM scheme. An important ingredient of his analysis is the introduction of correctors designed to remove the leading term in the boundary consistency error~\cite[Section~5]{weiss2006numerical}.

The purpose of the present paper is to rigorously obtain the optimal convergence rates in a canonical setting in which the half-way bounce-back rule is expected to exhibit second-order velocity and first-order pressure convergence. Namely, we consider the linear D2Q9 BGK lattice Boltzmann method in the periodic flat channel
\[
    \Omega=\mathbb T\times(0,1)
\]
with the bounce-back rule under the diffusive scaling. The physical walls are placed half a lattice spacing away from the first and last layers of lattice sites. Under suitable smoothness and initialization assumptions, we prove that, for every fixed $T>0$,
\[
    \sup_{0\leq nh^2\leq T}
    \|U_h^n-u(nh^2)\|_{\ell_h^2}
    \leq
    C_T h^2,
    \qquad
    \sup_{0\leq nh^2\leq T}
    \|P_h^n-p(nh^2)\|_{\ell_h^2}
    \leq
    C_T h,
\]
where $(U_{h}^{n},P_{h}^{n})$ are the velocity and pressure computed from the LBM scheme, $(u,p)$ is the solution to the Stokes equations in $\Omega$ with the homogeneous Dirichlet boundary conditions, and $\| \cdot \|_{\ell_{h}^{2}}$ is the $L^2$-norm for lattice functions. The constant $C_T>0$ is independent of the lattice spacing $h>0$. To the best of our knowledge, this is the first rigorous estimate recovering the formally and numerically predicted second-order velocity accuracy and first-order pressure accuracy of the half-way bounce-back rule LBM for incompressible flows in a bounded domain.

The key point of the proof is a refinement of the prediction function $\hat{f}_h$ by adding some correctors based on a decomposition into macroscopic and kinetic components of the boundary consistency error. The macroscopic part is absorbed into correctors constructed from Stokes equations in the spirit of the technique of Dong, Ying, and Zhang for a MAC scheme for Stokes equations with Dirichlet boundary conditions~\cite{DongYingZhang2020}. The remaining kinetic part is absorbed into Knudsen layer correctors. The idea of introducing suitably constructed correctors to remove unwanted terms in the boundary error can be seen in the work of Wei\ss~\cite{weiss2006numerical}. However, in his setting, the construction of the correctors is purely algebraic. On the other hand, in the present multidimensional incompressible problem, the construction of the correctors requires solving Stokes equations with carefully chosen boundary data and Knudsen layer equations and is substantially more involved. After introducing the correctors, the resulting refined prediction function has a pointwise boundary residual of $O(h^5)$ and a pointwise interior residual of the form
\[
    O(h^6)
    +
    O\left(h^5\Gamma^{d}\right)
\]
for some $0 \leq \Gamma <1$, where $d$ is the distance to the closest wall in lattice units. Combining these bounds with the $L^2$-stability estimate of~\cite{JunkYong2009}, together with a suitable initialization, we obtain an $O(h^3)$ bound for the $L^2$-norm of the difference between the refined prediction function and the actual LBM solution. After taking the velocity and pressure moments, the optimal $O(h^2)$ velocity error and $O(h)$ pressure error follow.

Boundary layers of Knudsen type are, of course, classical in the kinetic theory of gases. For the Boltzmann equation, Knudsen layers form an essential part of the derivation of slip and no-slip boundary conditions for fluid dynamical equations. We refer to the books by Sone~\cite{Sone2002,Sone2007} for a systematic treatment. Knudsen layers in lattice kinetic models have also been analyzed; we refer for example to~\cite{CornubertDHumieresLevermore1991,GinzburgRouxSilva2015}. The novelty of the present work is therefore not the occurrence of a Knudsen layer itself, but its incorporation, together with the outer Stokes correctors, into a rigorous error analysis that
recovers the sharp convergence rate for the macroscopic quantities.

The paper is organized as follows. In Section~\ref{sec:formulation}, we formulate the LBM scheme considered in this paper, recall the convergence and stability results in~\cite{JunkYang2008,JunkYong2009}, and state our main theorem. Section~\ref{sec:correctors} is devoted to the construction of the Stokes and Knudsen layer correctors and the refined prediction function. The required interior and boundary residual estimates are established here. In Section~\ref{sec:convergence}, we combine these estimates with the $L^2$-stability estimate of~\cite{JunkYong2009} to prove the main
theorem. Section~\ref{sec:conclusion} contains concluding remarks and possible extensions.

\section{Formulation, known results, and the main theorem}
\label{sec:formulation}
\subsection{Geometry, D2Q9 velocity set, and notation}\label{sec:geometry}
We consider the channel
\begin{equation}
  \Omega=\T\times(0,1),
\end{equation}
where $\T \coloneqq \R / \mathbb{Z}$ is the 1D torus. We note that the boundary of $\Omega$ is
\begin{equation}
    \partial \Omega=\T \times \{ 0, 1 \}.
\end{equation}
Let $M \geq 2$ be a fixed integer and set $h=M^{-1}$. The domain $\Omega$ is discretized by the square lattice
\begin{equation}
    \Omega_h
    \coloneqq
    \left\{z_{j,\ell}=\left(jh,\left(\ell+\frac12\right)h\right) :
    j\in\mathbb Z/M\mathbb Z,\ \ell=0,\ldots,M-1\right\}.
\end{equation}
Correspondingly, we write
\begin{equation}
    \T_h \coloneqq \{ jh : j \in \mathbb{Z}/M\mathbb{Z} \}.
\end{equation}
Thus the physical walls are located half a lattice spacing away from the first and the last layers of lattice sites, namely, $z_{j,\ell}$ with $\ell=0,M-1$; this is the half-way configuration considered throughout the paper.

We adopt the D2Q9 velocity set, namely, the following 9 velocities:
\begin{equation}
\begin{aligned}
    &c_0=(0,0)^T,\quad c_1=(1,0)^T,\quad c_2=(0,1)^T,\quad c_3=(-1,0)^T,\quad c_4=(0,-1)^T,\\
    &c_5=(1,1)^T,\quad c_6=(-1,1)^T,\quad c_7=(-1,-1)^T,\quad c_8=(1,-1)^T.
\end{aligned}
\end{equation}
For readability, we often use the directional notation
\[
    E=1,\quad N=2,\quad W=3,\quad S=4,\quad
    NE=5,\quad NW=6,\quad SW=7,\quad SE=8.
\]
For example, $c_E=c_1=(1,0)^T$. Next, for $i \in \{ 0,\ldots,8 \}$, we denote by $i^*$ the index such that $c_{i^*}=-c_i$. For example, $1^*=3$ or equivalently $E^*=W$. To each velocity, we associate the weights 
\begin{equation}
    w_0=\frac49,\qquad
    w_1=\cdots=w_4=\frac19,\qquad
    w_5=\cdots=w_8=\frac1{36}.
\end{equation}

Let $T>0$ and define
\begin{equation}
    N_h
    \coloneqq
    \max \{ n \in \mathbb{N}_0 : nh^2 \leq T \}.
\end{equation}
For an integer $n$ with $0 \leq n \leq N_h$, we set $t_n=nh^2$.

For $\rho_0 \in \R$ and $j_0 \in \R^2$, we define the equilibrium distribution with density $\rho_0$ and momentum $j_0$ as the vector $F^{\mathrm{eq}}(\rho_0,j_0)=(F_{i}^{\mathrm{eq}}(\rho_0,j_0))_{i=0}^{8} \in \R^9$ with components
\begin{equation}
    F_i^{\mathrm{eq}}(\rho_0,j_0) \coloneqq w_i (\rho_0+3c_i \cdot j_0), \qquad i \in \{ 0,\ldots,8 \}.
\end{equation}
In addition, for a vector $f=(f_i)_{i=0}^{8} \in \R^9$, we define its density and momentum by
\begin{equation}\label{eq:rho_j}
    \rho_f \coloneqq \sum_{i=0}^{8}f_i, \qquad j_f \coloneqq \sum_{i=0}^{8}c_i f_i.
\end{equation}
Then the equilibrium projection of $f$ is defined as the vector
\begin{equation}
    \Pi f
    \coloneqq
    F^{\mathrm{eq}}(\rho_f,j_f) \in \R^9.
\end{equation}
For $\tau>0$, the BGK relaxation operator $J \colon \R^9 \to \R^9$ is defined by
\begin{equation}
    Jf \coloneqq \frac{1}{\tau}(\Pi f-f)=\frac{1}{\tau}\left( F^{\mathrm{eq}}(\rho_f,j_f)-f \right).
\end{equation}
Let $I \colon \R^9 \to \R^9$ be the identity operator and set
\begin{equation}\label{eq:def-gamma}
    \gamma
    \coloneqq
    1-\frac{1}{\tau}.
\end{equation}
The operator $\mathscr C \colon \R^9 \to \R^9$ is defined by
\begin{equation}\label{eq:def-collision-operator}
    \mathscr C
    \coloneqq
    I+J=\Pi+\gamma(I-\Pi).
\end{equation}

For $f \in \R^9$, we write
\begin{equation}\label{def:f-norm}
    \| f \|
    \coloneqq
    \sqrt{\sum_{i=0}^{8}w_{i}^{-1}f_{i}^{2}}.
\end{equation}
This weighted norm is of course equivalent to the standard Euclidean norm. We introduce the function space
\begin{equation}
    X_h
    \coloneqq
    \{ f \colon \Omega_h \to \R^9 \}
\end{equation}
endowed with the norm
\begin{equation}\label{def:norm-Xh}
    \| f \|_{X_h}
    \coloneqq
    \sqrt{\sum_{z \in \Omega_h}h^2 \| f(z) \|^2}.
\end{equation}
For $\varphi \colon \Omega \to \R^d$ or $\varphi \colon \Omega_h \to \R^d$ with $d=1$ or $2$, we write
\begin{equation}\label{eq:lh2}
    \| \varphi \|_{\ell_{h}^{2}} = \sqrt{\sum_{z \in \Omega_h} h^2 |\varphi(z)|^2},
\end{equation}
where $|\varphi(z)|$ is the standard Euclidean norm of $\R^d$.

\subsection{LBM scheme with the bounce-back rule}\label{sec:scheme}
We now introduce a LBM scheme that determines a sequence of $\R^9$-valued functions
\begin{equation}
    f_h^n \colon \Omega_h \to \R^9, \qquad 0 \leq n \leq N_h
\end{equation}
given initial data $f_{h}^{0} \colon \Omega_h \to \R^9$. In what follows, components of $f_{h}^{n}$ are written as
\begin{equation}
    f_{h}^{n}=(f_{h,i}^{n})_{i=0}^8.
\end{equation}
The update rules of the LBM scheme are as follows: for $0 \leq n \leq N_h-1$, $z \in \Omega_h$, and $i \in \{ 0,\ldots,8 \}$, we impose
\begin{equation}\label{eq:LBM}
    f_{h,i}^{n+1}(z+c_i h) = (\mathscr C f_{h}^{n}(z))_i \qquad \text{if $z+c_i h \in \Omega_h$}
\end{equation}
and
\begin{equation}\label{eq:bounce-back}
    f_{h,i^*}^{n+1}(z) = (\mathscr C f_{h}^{n}(z))_i \qquad \text{if $z+c_i h \notin \Omega_h$}.
\end{equation}
The second equation~\eqref{eq:bounce-back} is the bounce-back rule. Written out explicitly,~\eqref{eq:LBM} and~\eqref{eq:bounce-back} are
\begin{equation}
    f_{h,i}^{n+1}(z+c_i h) = f_{h,i}^{n}(z)+\frac{1}{\tau}\left( F_i^{\mathrm{eq}}(\rho_{f_{h}^{n}(z)},j_{f_{h}^{n}(z)})-f_{h,i}^{n}(z) \right) \qquad \text{if $z+c_i h \in \Omega_h$}
\end{equation}
and
\begin{equation}
    f_{h,i^*}^{n+1}(z) = f_{h,i}^{n}(z)+\frac{1}{\tau}\left( F_i^{\mathrm{eq}}(\rho_{f_{h}^{n}(z)},j_{f_{h}^{n}(z)})-f_{h,i}^{n}(z) \right) \qquad \text{if $z+c_i h \notin \Omega_h$},
\end{equation}
respectively. Defining the streaming operator $\widetilde S \colon X_h \to X_h$ by
\begin{equation}\label{eq:streaming}
    (\widetilde{S}f)_i(z)=
    \begin{dcases}
        f_i(z+c_i h) & (\text{if $z+c_i h \in \Omega_h$}), \\
        f_{i^*}(z) & (\text{if $z+c_i h \notin \Omega_h$})
    \end{dcases}
\end{equation}
for $f \in X_h$, the LBM scheme~\eqref{eq:LBM} and~\eqref{eq:bounce-back} can be written in the compact form
\begin{equation}\label{eq:LBM-compact}
    \widetilde{S}f_{h}^{n+1}=\mathscr C f_{h}^{n} \qquad (0 \leq n \leq N_h-1).
\end{equation}

\subsection{Known results: $L^2$-stability and a convergence theorem}
Let us review some results in~\cite{JunkYang2008,JunkYong2009} that are particularly relevant for us. Let $f_{h}^{n}$ be the solution to the LBM scheme~\eqref{eq:LBM-compact}. Define the associated macroscopic velocity and the pressure by
\begin{equation}\label{eq:macro-intro}
    U_h^n \coloneqq \frac{1}{h}j_{f_{h}^{n}}=\frac1h\sum_{i=0}^8c_i f_{h,i}^n,
    \qquad
    P_h^n \coloneqq \frac{1}{3h^2}\left( \rho_{f_{h}^{n}}-1 \right)=\frac1{3h^2}\left(\sum_{i=0}^8 f_{h,i}^n-1\right).
\end{equation}
Set the viscosity coefficient $\nu$ to
\begin{equation}
    \nu=\frac{\tau}{3}-\frac{1}{6}.
\end{equation}
Note that $\nu>0$ is equivalent to $\tau>1/2$. Let $\psi \colon \Omega \to \R^2$ be a divergence-free vector field satisfying $\psi=0$ on $\partial \Omega$. Upon taking the initial data $f_{h}^{0}$ appropriately depending on $\psi$, Junk and Yang proved in~\cite{JunkYang2008} that $(U_{h}^{n},P_{h}^{n})$ converges as $h \to 0$ to the solution to the Stokes equations
\begin{equation}\label{eq:stokes-intro}
\begin{dcases}
    \partial_tu+\nabla p=\nu\Delta u &\text{in $(0,T)\times\Omega$},\\
    \nabla\cdot u=0 &\text{in $(0,T)\times\Omega$},\\
    u=0 &\text{on $(0,T)\times\partial\Omega$}, \\
    u(0,\cdot)=\psi &\text{in $\Omega$}
\end{dcases}
\end{equation}
with periodicity in $x$ and the normalization $\int_\Omega p(t,x,y)\dd x \dd y=0$. A more precise statement is described below. The proof is based on an $L^2$-stability estimate and consistency analysis for the prediction function detailed below. First, we recall the $L^2$-stability estimate.

\begin{proposition}[{\cite[Lemma~2.2]{JunkYong2009}}]\label{prop:stability}
    We have
    \begin{equation}
        \| \widetilde S \|_{X_h \to X_h}=1
    \end{equation}
    and when $\tau \geq 1/2$, it holds that
    \begin{equation}
        \| \mathscr C \|_{X_h \to X_h} \leq 1.
    \end{equation}
\end{proposition}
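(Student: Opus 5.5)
We treat the two operators separately, working throughout with the weighted inner product $\langle f,g\rangle_{X_h}=\sum_{z\in\Omega_h}h^2\sum_i w_i^{-1}f_i(z)g_i(z)$ that induces $\|\cdot\|_{X_h}$.

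For $\widetilde S$, the plan is to show that it is a bijection of the index set $\Omega_h\times\{0,\ldots,8\}$ onto itself, composed with coordinate assignment. The weights satisfy $w_i=w_{i^*}$, so such a permutation is an isometry. Concretely, we will show that the map $(z,i)\mapsto \sigma(z,i)$ is a bijection, where
\begin{equation}
    \sigma(z,i)=
    \begin{cases}
        (z+c_ih,\, i) & \text{if } z+c_ih\in\Omega_h,\\
        (z,\, i^*) & \text{otherwise}.
    \end{cases}
\end{equation}
Then $(\widetilde Sf)_i(z)=f_{\sigma(z,i)}$, and each target has the same weight, since either $i$ is unchanged or $i\mapsto i^*$. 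It suffices to prove injectivity, because the set is finite.

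We check injectivity as follows. Two streamed pairs with the same image must have the same $i$ and the same $z$. Two bounced pairs likewise coincide. The remaining case is a streamed pair $(z,i)$ and a bounced pair $(z',i')$ with $(z+c_ih,i)=(z',i'^*)$. Then $z'=z+c_ih$ and $i'=i^*$. But $z'+c_{i'}h=z\in\Omega_h$, which contradicts the assumption that $(z',i')$ bounces. Hence $\|\widetilde Sf\|_{X_h}=\|f\|_{X_h}$ for all $f$, and so the operator norm equals $1$.

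For $\mathscr C$, the operator acts pointwise in $z$, so it suffices to bound the $9\times9$ map on $(\R^9,\|\cdot\|)$ from \eqref{def:f-norm}. The key step is to show that $\Pi$ is the orthogonal projection with respect to the weighted inner product $\langle f,g\rangle=\sum_i w_i^{-1}f_ig_i$. This projection maps onto $V=\operatorname{span}\{w_i,\,w_ic_{i,x},\,w_ic_{i,y}\}$.

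To verify this, we first check that $\Pi$ is idempotent. This uses the moment identities $\sum_i w_i=1$, $\sum_i w_ic_i=0$, and $\sum_i w_ic_ic_i^T=\tfrac13 I$, which give $\rho_{\Pi f}=\rho_f$ and $j_{\Pi f}=j_f$. Next we check that $f-\Pi f\perp V$. Indeed, $\langle f-\Pi f, w\,\phi\rangle=\sum_i(f_i-(\Pi f)_i)\phi_i$ for $\phi\in\{1,c_x,c_y\}$, and this is a difference of equal moments, hence zero.

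With this in hand, $\mathscr C=\Pi+\gamma(I-\Pi)$ is self-adjoint, with eigenvalues $1$ and $\gamma$. Therefore
\begin{equation}
    \|\mathscr Cf\|^2=\|\Pi f\|^2+\gamma^2\|(I-\Pi)f\|^2\le\|f\|^2 \quad\text{whenever } |\gamma|\le1.
\end{equation}
Since $\gamma=1-1/\tau$, the condition $|\gamma|\le1$ is equivalent to $\tau\ge1/2$. Summing the pointwise bound over $z$ with weight $h^2$ gives $\|\mathscr C\|_{X_h\to X_h}\le1$.

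The main obstacle is the bijection argument at the walls: one must rule out a streamed value and a bounced value landing on the same slot. The orthogonality of $\Pi$ is a routine moment computation.
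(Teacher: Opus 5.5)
Your proof is correct. The paper gives no proof of its own and simply cites \cite[Lemma~2.2]{JunkYong2009}, whose argument is essentially yours: bounce-back streaming is a weight-preserving permutation of the populations (you rule out the mixed streamed/bounced collision correctly), and $\Pi$ is the $w^{-1}$-weighted orthogonal projection, so $\mathscr C$ has spectrum $\{1,\gamma\}$ with $|\gamma|\le 1$ for $\tau\ge 1/2$. Your permutation argument also gives the invertibility of $\widetilde S$ and $\|\widetilde S^{-1}\|_{X_h\to X_h}=1$, which the paper uses later.
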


In the remainder of the paper, we assume that
\begin{equation}
    \tau > \frac{1}{2}.
\end{equation}
As noted above, this corresponds to the positivity of the viscosity coefficient $\nu$. Constants appearing in our bounds may blow up as $\tau$ approaches $1/2$. For brevity, we fix $\tau>1/2$ and shall not consider the dependence of constants on $\tau$ hereafter.

We next recall the construction of the prediction function
\begin{equation}
    \hat{f}_{h} \colon [0,T] \times \Omega \to \R^9
\end{equation}
that is crucial in their proof. Note that the prediction function $\hat{f}_h$ is a function defined on the physical domain $\Omega$ and not on the lattice sites $\Omega_h$. First, let $(w,q)$ solve the modified Stokes equations
\begin{equation}\label{eq:Junk_higher_Stokes}
    \begin{dcases}
        \partial_t w+\nabla q=\nu \Delta w-\frac{\tau}{3}\nabla p & \text{in $(0,T) \times \Omega$}, \\
        \nabla \cdot w=-\partial_t p & \text{in $(0,T) \times \Omega$}, \\
        w=0 & \text{on $(0,T) \times \partial \Omega$}, \\
        w(0,\cdot)=w_{\mathrm{in}} & \text{in $\Omega$}
    \end{dcases}
\end{equation}
with periodicity in $x$ and the normalization $\int_{\Omega}q(t,x,y) \dd x \dd y=0$. The initial data $w_{\mathrm{in}}$ is required to satisfy the standard compatibility conditions of sufficiently high order. Define $f^{(k)} \colon [0,T] \times \Omega \to \R^9$ for $k=0,\ldots,5$ by
\begin{align}
    \label{eq:Junk_prediction}
    \begin{aligned}
    f^{(0)}
    &= F^{\mathrm{eq}}(1,0), \\
    f^{(1)}
    &= F^{\mathrm{eq}}(0,u), \\
    f^{(2)}
    &= F^{\mathrm{eq}}(3p,0)
    - \tau (V\cdot\nabla)f^{(1)}, \\
    f^{(3)}
    &= F^{\mathrm{eq}}(0,w)
    - \tau
    \left(
        \partial_t f^{(1)}
        +(V\cdot\nabla)f^{(2)}
        +\frac12 (V\cdot\nabla)^2 f^{(1)}
    \right), \\
    f^{(4)}
    &= F^{\mathrm{eq}}(3q,0)
    - \tau
    \left(
        \partial_t f^{(2)}
        +(V\cdot\nabla)f^{(3)}
        +\frac12 (V\cdot\nabla)^2 f^{(2)}
        +D_3f^{(1)}
    \right), \\
    f^{(5)}
    &= -\tau
    \left(
        \partial_t f^{(3)}
        +(V\cdot\nabla)f^{(4)}
        +\frac12 (V\cdot\nabla)^2 f^{(3)}
        +D_4f^{(1)}
        +D_3f^{(2)}
    \right),
    \end{aligned}
\end{align}
where
\begin{equation}
    (V\cdot\nabla)^l f
    \coloneqq
    \left(
        (c_0\cdot\nabla)^l f_0,
        \ldots,
        (c_8\cdot\nabla)^l f_8
    \right)^{T}
\end{equation}
and
\begin{equation}
    D_l
    \coloneqq
    \sum_{2a+b=l}
    \frac{\partial_t^a (V\cdot\nabla)^b}{a!\,b!}
\end{equation}
for $l \in \mathbb{N}$. Then the prediction function $\hat{f}_h \colon [0,T] \times \Omega \to \R^9$ is defined by
\begin{equation}\label{eq:Ph}
    \hat{f}_h \coloneqq \sum_{k=0}^{5}h^k f^{(k)}.
\end{equation}
The density and the momentum of $\hat{f}_h=(\hat{f}_{h,i})_{i=0}^{8}$ are related with $(u,p)$ and $(w,q)$ by the formulas
\begin{equation}\label{eq:Ph_moments}
    \sum_{i=0}^{8}\hat{f}_{h,i}=1+3h^2p+3h^4q, \qquad
    \sum_{i=0}^{8}c_i \hat{f}_{h,i}=hu+h^3w.
\end{equation}
See~\cite[p.~1487]{JunkYang2008}.\footnote{There is a missing factor of $3$ in their formula.}

The asymptotic analysis in~\cite{JunkYang2005} shows that the prediction function $\hat{f}_h$ satisfies the LBM scheme~\eqref{eq:LBM-compact} up to some order in $h$. More precisely, we have the following proposition.

\begin{proposition}[{\cite[Theorem~3]{JunkYang2008}}]\label{prop:Junk_error}
    Assume that $\tau>1/2$ and that the solution $(u,p)$ to~\eqref{eq:stokes-intro} is $C^5$-regular in $t$ and $x$. Then there exists a constant $C_T>0$ depending only on $T$ such that the prediction function $\hat{f}_h$ defined by~\eqref{eq:Junk_prediction} and~\eqref{eq:Ph} satisfies
    \begin{equation}\label{eq:Junk_residual}
        \widetilde{S}\hat{f}_{h}(t_{n+1})=\mathscr C\hat{f}_{h}(t_n)+\hat{r}_{h}^{n}+\widehat{R}_{h}^{n}
    \end{equation}
    for some $\hat{r}_{h}^{n},\widehat{R}_{h}^{n} \in X_h$ satisfying for $z \in \Omega_h$ and $0 \leq n \leq N_h-1$,
    \begin{equation}\label{eq:Junk_residual_order}
        \| \hat{r}_{h}^{n}(z) \| \leq C_T h^6, \qquad \| \widehat{R}_{h}^{n}(z) \| \leq C_T h^3,
    \end{equation}
    and
    \begin{equation}\label{eq:R_support}
        \widehat{R}_{h,i}^{n}(z)=0 \qquad (\text{for $i \in \{ 0,\ldots,8 \}$ with $z+c_i h \in \Omega_h$}).
    \end{equation}
     In particular, taking $C_T$ larger if necessary,\footnote{In what follows, by abuse of notation, we shall use the symbol $C_T$ to denote a generic positive constant depending only on $T$ that could possibly vary from place to place.}
    \begin{equation}\label{eq:Junk_residual_order_sum}
        \| \hat{r}_{h}^{n} \|_{X_h} \leq C_T h^6, \qquad \| \widehat{R}_{h}^{n} \|_{X_h} \leq C_T h^{7/2}.
    \end{equation}
\end{proposition}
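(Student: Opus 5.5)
The plan is to insert $\hat f_h$ into the scheme and Taylor expand, handling interior sites and boundary sites separately; this is the route of \cite{JunkYang2008,JunkYang2005}.

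\emph{Interior sites.} Fix $z\in\Omega_h$ and $i$ with $z+c_ih\in\Omega_h$. Then $(\widetilde S\hat f_h(t_{n+1}))_i(z)=\hat f_{h,i}(t_n+h^2,z+c_ih)$. I would Taylor expand this around $(t_n,z)$ in the form
\[
\sum_{l\ge0}h^l D_l\hat f_{h,i}(t_n,z),
\]
since $h^2$ in time pairs with $h$ in space and this reproduces the operators $D_l$. Next I would collect powers $h^k$ for $k=0,\dots,5$ and compare with $\mathscr C\hat f_h=\hat f_h+J\hat f_h$.

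Because $J$ is linear with kernel the equilibria, the identity at order $h^k$ reads
\[
\sum_{m+l=k,\,l\ge1} D_l f^{(m)}=\tfrac1\tau\bigl(\Pi f^{(k)}-f^{(k)}\bigr).
\]
Here $D_1=V\cdot\nabla$ and $D_2=\partial_t+\tfrac12(V\cdot\nabla)^2$. The recursion~\eqref{eq:Junk_prediction} is designed exactly so that the non-equilibrium part of $f^{(k)}$ equals $-\tau$ times the lower-order terms. What remains is a solvability condition: the lower-order terms must have zero density and momentum moments. I would check this order by order using the D2Q9 moment identities $\sum w_i c_i c_i=\tfrac13 I$ and the fourth-moment isotropy. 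These conditions reduce to:
\begin{itemize}
\item $\nabla\cdot u=0$ at $k=2$;
\item the Stokes system~\eqref{eq:stokes-intro} at $k=3$, with $\nu=\tau/3-1/6$ produced by the $\tfrac12(V\cdot\nabla)^2$ and $-\tau(V\cdot\nabla)^2$ contributions;
\item $\nabla\cdot w=-\partial_tp$ at $k=4$;
\item the modified system~\eqref{eq:Junk_higher_Stokes} at $k=5$.
\end{itemize}
For $k\le5$ the order-$h^k$ identity then holds exactly. The leftover consists of the Taylor remainders and products $h^{m+l}D_lf^{(m)}$ with $m+l\ge6$. These are pointwise $O(h^6)$, with constants given by sup norms of derivatives of $(u,p,w,q)$ up to the available order on $[0,T]$. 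I set this to be $\hat r_h^n$.

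\emph{Boundary sites.} If $z+c_ih\notin\Omega_h$, the left side is instead $\hat f_{h,i^*}(t_{n+1},z)$. I would define $\widehat R^n_{h,i}(z)$ as the difference between the true residual and the interior-type expression, which gives~\eqref{eq:R_support} by construction. The interior-type expression is itself evaluated via the extension of $\hat f_h$ past the wall; $\hat f_h$ is smooth up to $y=1+h/2$ if $(u,p)$ is extended smoothly, or one can expand only to the boundary.

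The key point is to show that $\hat f_{h,i^*}(t_{n+1},z)-\hat f_{h,i}(t_{n+1},z+c_ih)=O(h^3)$. I would expand both around the wall point $z+c_ih/2$, which lies on $\partial\Omega$ in the half-way configuration. The antisymmetric part $f_i-f_{i^*}$ at orders $h^1$ and $h^2$ involves $3w_ic_i\cdot u$ and $-\tau w_i(c_i\cdot\nabla)(3c_i\cdot u)$ plus its reflection. The first vanishes because $u=0$ on the wall. At order $h^2$, the half-step Taylor shift $\pm\tfrac h2(c_i\cdot\nabla)f^{(1)}$ combines with the $-\tau(V\cdot\nabla)f^{(1)}$ term. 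These cancel in the even/odd combination because $F^{\mathrm{eq}}(3p,0)$ is even in $c_i$ and $(c_i\cdot\nabla)(c_i\cdot u)$ is even, so the mismatch vanishes. Tangential derivatives of $u$ also vanish on the wall. The remaining $O(h^3)$ terms are bounded by $C_T$ via $C^5$ regularity, giving the pointwise bound in~\eqref{eq:Junk_residual_order}.

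\emph{Norm bounds.} $\widehat R^n_h$ is supported on the two boundary rows, which contain $2M=O(h^{-1})$ sites. Hence
\[
\|\widehat R^n_h\|_{X_h}^2\le 2M\,h^2\,C_T^2h^6=O(h^7),
\]
giving $C_Th^{7/2}$. Since $\sum_z h^2=1$, the interior term gives $\|\hat r^n_h\|_{X_h}\le C_Th^6$.

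The main obstacle is the order-by-order moment bookkeeping that pins down $\nu$ and the modified system, together with verifying the $O(h^2)$ cancellation at the wall, where odd and even parts under $i\leftrightarrow i^*$ must be tracked carefully.
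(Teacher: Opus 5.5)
Your route is the same as the one the paper relies on; the paper itself only cites Junk--Yang. You expand $\hat f_{h,i}(t_n+h^2,z+c_ih)=\sum_l h^lD_l\hat f_{h,i}(t_n,z)$ and match orders $h^1,\dots,h^5$ exactly under the solvability conditions $\Pi S_k=0$, where $S_k=\sum_{l\ge1}D_lf^{(k-l)}$. You then expand the bounce-back mismatch about the wall point $z+c_ih/2$ and count the $O(h^{-1})$ boundary sites. The $k=2,3$ conditions, the wall analysis up to order $h^2$, and the norm bounds are correct. Two small remarks:
\begin{itemize}
\item At order $h^2$ on the wall the two contributions vanish separately. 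The half-step shifts cancel because $f^{(1)}_i+f^{(1)}_{i^*}=0$, and $f^{(2)}_{i^*}=f^{(2)}_i$ by evenness. Nothing needs to "combine", and tangential derivatives play no role.
\item Your split puts the extension-based Taylor remainder on boundary links into $\hat r$. It differs from the paper's $r_h/R_h$ split only by an $O(h^6)$ term on those links, so either version of the statement follows.
\end{itemize}

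The step that fails as written is the claim that the $k=4,5$ solvability conditions reduce to the displayed system \eqref{eq:Junk_higher_Stokes}.

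At $k=4$: with $f^{(2)}=F^{\mathrm{eq}}(3p,0)-\tau(V\cdot\nabla)f^{(1)}$ you have $\rho_{f^{(2)}}=3p$ and $j_{f^{(3)}}=w$. The density of $S_4$ is then $\nabla\cdot w+3\partial_tp+\tfrac12\Delta p$. Since $\Delta p=0$ for Stokes, the condition is $\nabla\cdot w=-3\partial_tp$, not $-\partial_tp$.

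At $k=5$: count $\partial_x$ as weight $1$, $\partial_t$ as weight $2$, and $u,p,w,q$ as weights $0,1,2,3$. Then each $f^{(m)}$ with $m\ge1$ is homogeneous of weight $m-1$, so $S_5$ is homogeneous of weight $4$. The momentum condition is therefore a Stokes-type equation for $(w,q)$ whose forcing is built from weight-four derivatives of $(u,p)$. For D2Q9 this forcing includes non-isotropic fourth derivatives of $u$ for generic $\tau$, because the sixth velocity moment is not isotropic. A weight-two term like $-\tfrac\tau3\nabla p$ cannot occur.

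Consequently, if $(w,q)$ are taken from the displayed system, then $\Pi S_4=F^{\mathrm{eq}}(2\partial_tp,0)\neq0$ in general, and the interior residual is only $O(h^4)$.

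The repair is to let the two conditions you actually derive define $(w,q)$, with $w=0$ on $\partial\Omega$. This is compatible because $\partial_tp$ has zero mean, and it is what the underlying asymptotic analysis does. By parity ($S_k$ has the parity of $k$), only the density condition is active at $k=4$ and only the momentum condition at $k=5$. With that choice the rest of your argument goes through unchanged, since the wall analysis does not involve $(w,q)$ below order $h^3$.
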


The functions $\hat{r}_{h}^{n}$ and $\widehat{R}_{h}^{n}$ give the consistency error of the prediction function $\hat{f}_h$ to the LBM scheme~\eqref{eq:LBM-compact} at the interior and the boundary, respectively. In particular, the second inequality in~\eqref{eq:Junk_residual_order} means that the bounce-back rule~\eqref{eq:bounce-back} is satisfied up to $O(h^3)$.

Combining the $L^2$-stability estimate in Proposition~\ref{prop:stability} and the consistency analysis in Proposition~\ref{prop:Junk_error}, they showed the following convergence theorem. We shall also recall their proof as it is important to understand the necessary refinement of the prediction function in this paper.

\begin{theorem}[{\cite[Theorem~5]{JunkYang2008}}]\label{thm:Junk_convergence}
    Assume that $\tau>1/2$ and that the solution $(u,p)$ to~\eqref{eq:stokes-intro} is $C^5$-regular in $t$ and $x$. Let $f_{h}^{n}$ be the solution to the LBM scheme~\eqref{eq:LBM-compact} and let $\hat{f}_h$ be the prediction function defined by~\eqref{eq:Junk_prediction} and~\eqref{eq:Ph}. Then there exists $C_T>0$ depending only on $T$ such that
    \begin{equation}\label{eq:thm_Junk_f}
        \| f_{h}^{n}-\hat{f}_h(t_n) \|_{X_h} \leq C_T\left( \| f_{h}^{0}-\hat{f}_h(0) \|_{X_h}+h^{3/2} \right)
    \end{equation}
    for $0 \leq n \leq N_h$. In particular, $(U_{h}^{n},P_{h}^{n})$ defined by~\eqref{eq:macro-intro} satisfies
    \begin{equation}\label{eq:thm_Junk_U}
        \| U_{h}^{n}-u(t_n) \|_{\ell_{h}^{2}} \leq C_T\left( \frac{1}{h}\| f_{h}^{0}-\hat{f}_h(0) \|_{X_h}+h^{1/2} \right)
    \end{equation}
    and
    \begin{equation}\label{eq:thm_Junk_P}
        \| P_{h}^{n}-p(t_n) \|_{\ell_{h}^{2}} \leq C_T\left( \frac{1}{h^2}\| f_{h}^{0}-\hat{f}_h(0) \|_{X_h}+h^{-1/2} \right).
    \end{equation}
\end{theorem}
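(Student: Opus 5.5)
The plan is the standard energy argument: set $e^n \coloneqq f_h^n-\hat f_h(t_n)\in X_h$, derive a one-step recursion for $e^n$ from the scheme and the consistency identity, and close it with the $L^2$-stability of Proposition~\ref{prop:stability}. Subtracting~\eqref{eq:Junk_residual} from~\eqref{eq:LBM-compact} gives
\begin{equation}
    \widetilde S e^{n+1}=\mathscr C e^n-\hat r_h^n-\widehat R_h^n .
\end{equation}
This uses that $\widetilde S$ and $\mathscr C$ are linear. Linearity of $\mathscr C$ holds because $\Pi$ is linear in $f$: the equilibrium $F^{\mathrm{eq}}(\rho_0,j_0)$ is linear in $(\rho_0,j_0)$, and these are linear moments of $f$.

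Next we invert the streaming step. The map $\widetilde S$ is a permutation of the entries of $f$ (either a shift or the bounce-back swap $i\leftrightarrow i^*$ at the same site). The weights satisfy $w_i=w_{i^*}$, so $\widetilde S$ is an isometry of $X_h$, consistent with $\|\widetilde S\|=1$ in Proposition~\ref{prop:stability}. Hence $\|e^{n+1}\|_{X_h}=\|\widetilde S e^{n+1}\|_{X_h}$. Applying $\|\mathscr C\|_{X_h\to X_h}\le 1$ (valid since $\tau>1/2$) yields
\begin{equation}
    \|e^{n+1}\|_{X_h}\le \|e^n\|_{X_h}+\|\hat r_h^n\|_{X_h}+\|\widehat R_h^n\|_{X_h}.
\end{equation}

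Summing over $n$ and using~\eqref{eq:Junk_residual_order_sum} together with $n\le N_h\le T h^{-2}$ gives
\begin{equation}
    \|e^n\|_{X_h}\le \|e^0\|_{X_h}+Th^{-2}C_T(h^6+h^{7/2})\le \|e^0\|_{X_h}+C_T h^{3/2},
\end{equation}
which is~\eqref{eq:thm_Junk_f}. The boundary estimate $h^{7/2}$ in~\eqref{eq:Junk_residual_order_sum} comes from~\eqref{eq:R_support}: $\widehat R$ vanishes except at the $O(h^{-1})$ sites in the layers $\ell=0,M-1$, each carrying a pointwise bound $C_T h^3$ and weight $h^2$.

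For the macroscopic bounds, the moment maps are bounded: by Cauchy--Schwarz with the weights, $|\rho_g|+|j_g|\le C\|g\|$ pointwise. By~\eqref{eq:Ph_moments},
\begin{align}
    U_h^n-u(t_n)&=h^{-1}j_{e^n}+h^2 w(t_n),\\
    P_h^n-p(t_n)&=(3h^2)^{-1}\rho_{e^n}+h^2 q(t_n),
\end{align}
where the $\ell_h^2$-norms are taken over $\Omega_h$. Since $w$ and $q$ are bounded on $[0,T]\times\Omega$, their contributions are $O(h^2)$, which is absorbed into $C_T h^{1/2}$ and $C_T h^{-1/2}$. The remaining terms give~\eqref{eq:thm_Junk_U} and~\eqref{eq:thm_Junk_P}, with prefactors $h^{-1}$ and $h^{-2}$ respectively.

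None of these steps is a real obstacle, since the hard input (the consistency estimate) is Proposition~\ref{prop:Junk_error}. The only delicate points are checking that $\widetilde S$ is exactly norm-preserving, including at the bounce-back sites where $w_i=w_{i^*}$ is needed, and counting the boundary sites carefully to get the $h^{7/2}$ factor.
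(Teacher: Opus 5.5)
Your proposal is correct and follows essentially the same route as the paper. Both derive the error recursion $\widetilde S e^{n+1}=\mathscr C e^n-\hat r_h^n-\widehat R_h^n$, close it with the $L^2$-stability of Proposition~\ref{prop:stability}, sum over $O(h^{-2})$ steps, and convert to velocity and pressure bounds via~\eqref{eq:Ph_moments}; the paper writes the iteration through $A_h=\widetilde S^{-1}\mathscr C$ and Duhamel's principle, which is equivalent to your one-step triangle inequality, and your check that $\widetilde S$ is a weight-preserving permutation justifies the bound $\|\widetilde S^{-1}\|_{X_h\to X_h}=1$ that the paper takes directly from Proposition~\ref{prop:stability}.
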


\begin{proof}
    Define $e_{h}^{n} \in X_h$ for $0 \leq n \leq N_h$ by
    \begin{equation}
        e_{h}^{n}
        \coloneqq
        f_{h}^{n}-\hat{f}_{h}(t_n).
    \end{equation}
    Set
    \begin{equation}
        A_h
        \coloneqq
        \widetilde{S}^{-1} \mathscr C.
    \end{equation}
    By Proposition~\ref{prop:stability}, we have
    \begin{equation}\label{eq:stability}
        \| \widetilde{S}^{-1} \|_{X_h \to X_h}=1,
        \qquad
        \| A_h \|_{X_h \to X_h} \leq 1.
    \end{equation}
    By~\eqref{eq:LBM-compact} and~\eqref{eq:Junk_residual}, we have
    \begin{align}
        f_{h}^{n+1}
        & =A_h f_{h}^{n}, \\
        \hat{f}_{h}(t_{n+1})
        & =A_h \hat{f}_{h}(t_n)
        +
        \widetilde S^{-1} \hat{r}_{h}^{n}
        +
        \widetilde S^{-1} \widehat{R}_{h}^{n}.
    \end{align}
    Subtracting the second equation from the first, we obtain
    \begin{equation}
        e_{h}^{n+1}=A_h e_{h}^{n}
        -
        \widetilde S^{-1} \hat{r}_{h}^{n}
        -
        \widetilde S^{-1} \widehat{R}_{h}^{n}.
    \end{equation}
    Hence by Duhamel's principle,
    \begin{equation}\label{eq:Junk_error_sum}
        e_{h}^{n}
        =
        (A_h)^n e_{h}^{0}
        -
        \sum_{k=0}^{n-1}(A_{h})^{n-k-1}\widetilde{S}^{-1}\hat{r}_{h}^{k}
        -
        \sum_{k=0}^{n-1}(A_{h})^{n-k-1}\widetilde{S}^{-1}\widehat{R}_{h}^{k}
    \end{equation}
    for $1 \leq n \leq N_h$. From~\eqref{eq:stability} and Proposition~\ref{prop:Junk_error}, it follows that
    \begin{align}
        \| e_{h}^{n} \|_{X_h}
        & \leq
        \| e_{h}^{0} \|_{X_h}
        +
        \sum_{k=0}^{n-1}\| \hat{r}_{h}^{k} \|_{X_h}
        +
        \sum_{k=0}^{n-1}\| \widehat{R}_{h}^{k} \|_{X_h} \\
        & \leq 
        \| f_{h}^{0}-\hat{f}_{h}(0) \|_{X_h}
        +
        \sum_{k=0}^{n-1}C_T h^{7/2} \\
        & \leq
        \| f_{h}^{0}-\hat{f}_{h}(0) \|_{X_h}
        +C_T h^{3/2}.
    \end{align}
    This proves~\eqref{eq:thm_Junk_f}, which combined with~\eqref{eq:macro-intro} and~\eqref{eq:Ph_moments} yields
    \begin{align}
        \| U_{h}^{n}-u(t_n)-h^2 w(t_n) \|_{\ell_h^2}
        & \leq C_T h^{1/2}, \\
        \| P_{h}^{n}-p(t_n)-h^2q(t_n) \|_{\ell_h^2}
        & \leq C_T h^{-1/2}
    \end{align}
    for $0 \leq n \leq N_h$. The bounds~\eqref{eq:thm_Junk_U} and~\eqref{eq:thm_Junk_P} follow immediately from these.
\end{proof}

Theorem~\ref{thm:Junk_convergence} implies that if we choose $f_{h}^{0}$ so that $\| f_{h}^{0}-\hat{f}_h(0) \|_{X_h}=O(h^{3/2})$, we have the $1/2$-order convergence for the velocity
\begin{equation}
    \| U_{h}^{n}-u(t_n) \|_{\ell_h^2} \leq C_T h^{1/2}.
\end{equation}
This convergence rate does not improve by choosing the initial data $f_{h}^{0}$ meticulously. The pressure $P_{h}^{n}$, on the other hand, is not guaranteed to converge to the pressure $p$ of the Stokes equations~\eqref{eq:stokes-intro}. However, as mentioned in the introduction, formal analyses and numerical experiments suggest $O(h^2)$ error for the velocity and $O(h)$ error for the pressure.

\subsection{Main theorem}

As noted above, the $1/2$-order convergence for the velocity guaranteed by Theorem~\ref{thm:Junk_convergence} is suboptimal. For the pressure, it does not even guarantee the convergence. In this paper, we improve Theorem~\ref{thm:Junk_convergence} and prove the optimal second-order convergence for the velocity and first-order convergence for the pressure.

The basic strategy of our proof is to refine the prediction function $\hat{f}_h$ by adding Stokes and Knudsen layer corrections. This refinement improves the $O(h^3)$ pointwise estimates in Proposition~\ref{prop:Junk_error} for the boundary residual $\widehat{R}_{h}^{n}$ to $O(h^5)$. Then by similar calculations as in the proof of Theorem~\ref{thm:Junk_convergence}, the second-order convergence for the velocity and first-order convergence for the pressure follow immediately. The construction of the Stokes correctors is inspired by the boundary error analysis technique of Dong, Ying, and Zhang for a MAC scheme for Stokes equations with Dirichlet boundary conditions~\cite{DongYingZhang2020}. The Knudsen layer correctors are discrete analogues of the Knudsen layer correctors in the kinetic theory of gases; see~\cite[Section~3]{Sone2002}.

To state our main theorem, let us first introduce a terminology on the initial data $f_{h}^{0}$ for the LBM scheme~\eqref{eq:LBM-compact}.

\begin{definition}\label{def:3rd-initialization}
    We call $f_{h}^{0} \in X_h$ a third-order initialization if there exists a constant $C_T>0$ depending only on $T>0$ such that
    \begin{equation}
        \| f_{h}^{0}-\hat{f}_{h}(0) \|_{X_h} \leq C_T h^3,
    \end{equation}
    where $\hat{f}_{h}$ is the prediction function defined by~\eqref{eq:Junk_prediction} and~\eqref{eq:Ph}.
\end{definition}

An example of a third-order initialization is given by
\begin{equation}
    f_{h}^{0}(z) \coloneqq \sum_{k=0}^{2}h^k f^{(k)}(0,z),
    \qquad
    z \in \Omega_h,
\end{equation}
where $f^{(k)}$ is defined by~\eqref{eq:Junk_prediction}.

We now state our main theorem, which gives the optimal second-order convergence for the velocity and first-order convergence for the pressure.

\begin{theorem}\label{thm:optimal_convergence}
    Assume that $\tau>1/2$ and that the solution $(u,p)$ to~\eqref{eq:stokes-intro} is $C^5$-regular in $t$ and $x$. Let $f_{h}^{0}$ be a third-order initialization and let $f_{h}^{n}$ be the solution to the LBM scheme~\eqref{eq:LBM-compact}. Then there exists a constant $C_T>0$ depending only on $T$ such that
    \begin{equation}
        \| U_{h}^{n}-u(t_n) \|_{\ell_{h}^{2}} \leq C_T h^2
    \end{equation}
    and
    \begin{equation}
        \| P_{h}^{n}-p(t_n) \|_{\ell_{h}^{2}} \leq C_T h
    \end{equation}
    for $0 \leq n \leq N_h$. Here, $t_n=nh^2$ and $(U_{h}^{n},P_{h}^{n})$ are defined by~\eqref{eq:macro-intro}.
\end{theorem}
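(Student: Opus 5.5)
The plan is to build a refined prediction function $\tilde f_h=\hat f_h+g_h$ on $[0,T]\times\Omega$, where $g_h$ is a sum of correctors of size $O(h^3)$. It should satisfy
\[
    \widetilde S\tilde f_h(t_{n+1})=\mathscr C\tilde f_h(t_n)+\tilde r_h^n+\widetilde R_h^n ,
\]
with $\widetilde R_h^n$ supported as in~\eqref{eq:R_support} and pointwise $O(h^5)$. The interior residual should satisfy $\|\tilde r_h^n(z)\|\le C_T(h^6+h^5\Gamma^{d(z)})$ for some $0\le\Gamma<1$, where $d(z)$ is the distance of $z$ to the nearest wall in lattice units. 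Given these bounds, the Duhamel argument in the proof of Theorem~\ref{thm:Junk_convergence}, together with~\eqref{eq:stability}, applies verbatim.

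\textbf{Summing the errors.} Summing $\Gamma^{2d}$ over lattice layers gives $\|h^5\Gamma^{d}\|_{X_h}\le C h^{5}\cdot h^{1/2}$. The boundary term satisfies $\|\widetilde R_h^n\|_{X_h}\le Ch^{11/2}$. Summing over $N_h\sim Th^{-2}$ steps therefore yields $O(h^{7/2})+O(h^4)$. Since $\|g_h(0)\|_{X_h}=O(h^3)$, a third-order initialization still satisfies $\|f_h^0-\tilde f_h(0)\|_{X_h}=O(h^3)$, and hence $\|f_h^n-\tilde f_h(t_n)\|_{X_h}\le C_Th^3$. Taking moments as in~\eqref{eq:macro-intro}, with the factors $h^{-1}$ and $h^{-2}$, and using~\eqref{eq:Ph_moments} gives velocity and pressure errors of order $h^2$ and $h$. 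The correctors' own moments need only be $O(h^3)$ in momentum and $O(h^3)$ in density, which is consistent with their size.

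\textbf{Constructing the correctors.} First compute the leading $O(h^3)$ boundary residual $\widehat R_h^n$ explicitly by Taylor expanding the bounce-back rule~\eqref{eq:bounce-back} at the layers $\ell=0$ and $\ell=M-1$. The expansion involves the wall offset $h/2$, and its $h^3$ coefficient is a known linear combination of traces of $\partial_y^2u$, $\partial_y p$, $\partial_t u$, etc. on $\partial\Omega$, restricted to the incoming directions $S,SE,SW$ (respectively $N,NE,NW$). I then decompose this coefficient into two parts.
\begin{itemize}
\item[(i)] A \emph{macroscopic part}: the component that can be produced by a Dirichlet boundary datum $h^2 b(t,x)$ for the velocity. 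This is removed by adding $h^3F^{\mathrm{eq}}(0,v)+h^4F^{\mathrm{eq}}(3\pi,0)+\cdots$, where $(v,\pi)$ solves Stokes with $v=b$ on $\partial\Omega$ and $\int_{\partial\Omega}b\cdot n=0$ (or the flux is compensated). Higher-order terms are then built exactly as in~\eqref{eq:Junk_prediction}, so that the interior residual remains $O(h^6)$, in the spirit of~\cite{DongYingZhang2020}.
\item[(ii)] A \emph{kinetic part}: the remaining part, orthogonal to what fluid data can absorb. This is removed by a discrete Knudsen layer $h^3K(t,x,\ell)$, which solves the $x$-independent half-space problem $K_i(\ell+c_{i,y})=(\mathscr CK(\ell))_i$ (freezing $t,x$) with bounce-back data at $\ell=0$.
\end{itemize}

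\textbf{Main obstacle.} The hardest step is the solvability of this half-space problem with exponential decay $\Gamma^\ell$. I would reduce it to a finite-dimensional linear recurrence in $\ell$ for the $9$ components. Here $\gamma=1-1/\tau\in(-1,1)$ governs the decay of non-equilibrium modes. The analysis requires splitting the transfer matrix into decaying, growing, and neutral (conserved-flux) eigenspaces. The boundary data must be shown to lie in the decaying subspace modulo precisely the neutral directions, namely the normal mass flux and the tangential slip. This is exactly what dictates the choice of the decomposition in the previous paragraph: the slip component is fed into the Stokes corrector's boundary datum $b$. Because $K$ depends smoothly on $(t,x)$ through its data, applying the scheme to $h^3K$ produces tangential and temporal derivative errors of size $h^4\Gamma^\ell$. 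These are handled by one further Knudsen corrector at order $h^4$, leaving $O(h^5\Gamma^{d})$ interior and $O(h^5)$ boundary residuals. Finally, the Knudsen layers from the two walls interact only through $\Gamma^{M}$, which is negligible.
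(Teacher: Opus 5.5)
Your overall architecture matches the paper's: a refined prediction function, the splitting of the $h^3$ bounce-back error along $\R^3=\operatorname{Ran}\mathcal C_\pm\oplus\operatorname{span}\{\kappa_\pm\}$ into a Stokes part and a Knudsen part, and then Duhamel with $L^2$-stability and moment extraction. However, your correction stops one round too early. After the Stokes corrector $h^2v$ and the layers $h^3K+h^4K_1$, the bounce-back residual is still $O(h^4)$, not $O(h^5)$. Its $h^4$ coefficient has three sources:
\begin{itemize}
\item the $h^4$ term of the bounce-back error of $\hat f_h$ itself;
\item the next-order mismatch of $h^2\bigl(\mathcal P_h[v,\pi]-F^{\mathrm{eq}}(1,0)\bigr)$, since Lemma~\ref{lem:pred_error_inhom} only gives $h\,\mathcal C_\pm b+O(h^2)$ and the $O(h^2)$ term contains tangential derivatives of $b$ (e.g.\ $-c_{ix}\partial_xF^{\mathrm{eq}}_i(0,b)$ at the lower wall);
\item the wall mismatch of $K_1$.
\end{itemize}
Choosing the homogeneous part of $K_1$ can only remove the $\kappa_\pm$-component. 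The $\operatorname{Ran}\mathcal C_\pm$-component (slip and normal flux at order $h^4$) is generically nonzero, and by your own neutral-mode count no decaying layer can absorb it. Left in place, it gives $\|\widetilde R_h^n\|_{X_h}\sim h^{9/2}$, so the Duhamel sum is only $O(h^{5/2})$. The argument then yields only $O(h^{3/2})$ velocity and $O(h^{1/2})$ pressure errors.

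The missing step is a second round. Decompose the new $h^4$ boundary coefficient as $R^{\pm,(4)}+\mathcal C_\pm g_1^\pm=\beta_\pm\kappa_\pm$. Then add a second Stokes corrector $h^3v_1$ with Dirichlet data $g_1^\pm$ (a velocity shift of $O(h^3)$, harmless for the moments) and a homogeneous layer $-h^4\beta_\pm\gamma^m\tilde\kappa_\pm$. The tangential error of this layer is already $O(h^5\Gamma^m)$, so no further tangential layer is needed. This is the paper's $u^{[2]}=u+h^2v_0+h^3v_1$ together with $\Phi_{1,h}^\pm$.

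Second, both Stokes correctors need zero net normal flux, $-\int_\T(g^-)_y\dd x+\int_\T(g^+)_y\dd x=0$, and ``or the flux is compensated'' does not establish this. Knudsen layers carry no mass flux (the components of $\kappa_\pm$ sum to zero), and a Stokes solution with nonzero net boundary flux does not exist. The paper derives the condition from exact conservation of the total mass $M_h[f]=h^2\sum_z\sum_if_i(z)$ under $\widetilde S$ and $\mathscr C$, applied to $\widetilde S F(t+h^2)-\mathscr CF(t)=r_h[F]+R_h[F]$:
\begin{itemize}
\item the left side is $O(h^6)$ in mass because the pressures have zero mean;
\item the interior residual has total mass $O(h^6)$ (at the second round this uses that $h^5\Gamma^{m}$ summed over the lattice is $O(h^6)$);
\item hence the boundary contribution $h^4\sum_\pm\int_\T\mu(R^{\pm,(3)})\dd x+O(h^5)$ (resp.\ $h^5\sum_\pm\int_\T\mu(R^{\pm,(4)})\dd x+O(h^6)$), with $\mu$ the component sum, must have vanishing leading coefficient.
\end{itemize}
Since $\mu(\mathcal C_\pm g)=\mp g_y$ and $\mu(\kappa_\pm)=0$, this is exactly the flux condition.

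Finally, the half-space problem is less of an obstacle than you expect, so the transfer-matrix analysis is unnecessary. Since $\tilde\kappa_\pm$ has zero density and momentum, $\mathscr C\tilde\kappa_\pm=\gamma\tilde\kappa_\pm$, and $\gamma^m\tilde\kappa_\pm$ is an explicit decaying solution. The tangential layer $\gamma^m\chi$ exists because $\lambda_-\cdot V_x\tilde\kappa_-=0$ for the left null vector $\lambda_-$ of $D_\gamma-\mathscr C$.
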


The rest of the paper is organized as follows. In Section~\ref{sec:correctors}, we construct our refined prediction function, which is the core of our analysis. Then combining this with Proposition~\ref{prop:stability}, Theorem~\ref{thm:optimal_convergence} is proved in Section~\ref{sec:convergence} in a similar fashion as Theorem~\ref{thm:Junk_convergence}.

\section{Refined prediction function}\label{sec:correctors}

\subsection{Prediction operator and residual operators}
We first define the prediction operator $\mathcal{P}_h$ as follows. Let $(U,P)$ be a solution to the Stokes equations
\begin{equation}\label{eq:Stokes}
\begin{dcases}
    \partial_tU+\nabla P=\nu\Delta U &\text{in $(0,T)\times\Omega$},\\
    \nabla\cdot U=0 &\text{in $(0,T)\times\Omega$}\\
\end{dcases}
\end{equation}
with periodicity in $x$ and the normalization $\int_\Omega P(t,x,y)\dd x \dd y=0$. Note that we do not specify the boundary and the initial data. Let $(W,Q)$ be the solution to the modified Stokes equations
\begin{equation}\label{eq:higher_Stokes}
    \begin{dcases}
        \partial_t W+\nabla Q=\nu \Delta W-\frac{\tau}{3}\nabla P & \text{in $(0,T) \times \Omega$}, \\
        \nabla \cdot W=-\partial_t P & \text{in $(0,T) \times \Omega$}, \\
        W=0 & \text{on $(0,T) \times \partial \Omega$}, \\
        W(0,\cdot)=W_{\mathrm{in}} & \text{in $\Omega$}
    \end{dcases}
\end{equation}
with periodicity in $x$ and the normalization $\int_{\Omega}Q(t,x,y) \dd x \dd y=0$. The initial data $W_{\mathrm{in}}$ is required to satisfy the standard compatibility conditions of sufficiently high order and to depend linearly on $(U,P)$. An explicit construction of such initial data is rather lengthy and is omitted here. Now, define $\hat{f}^{(k)}[U,P] \colon [0,T] \times \Omega \to \R^9$ as $f^{(k)}$ in~\eqref{eq:Junk_prediction} with $(u,p)$ and $(w,q)$ replaced by $(U,P)$ and $(W,Q)$. Then set
\begin{equation}\label{def:P}
    \mathcal{P}_h[U,P]
    \coloneqq
    \sum_{k=0}^{5}h^k \hat{f}^{(k)}[U,P].
\end{equation}
We note that $\mathcal{P}_h[U,P]$ is affine in $(U,P)$, or more precisely, $\mathcal{P}_h[U,P] - F^{\mathrm{eq}}(1,0)$ is linear in $(U,P)$.

Next, we define the residual operators $\mathcal{E}_h$, $r_h$, $R_h$, and $R_{h}^{\pm}$. For either a function $F \colon [0,T] \times \Omega \to \R^9$ or $F \colon [0,T] \times \Omega_h \to \R^9$, we introduce the total residual of $F$ as the function
\begin{equation}
    \mathcal{E}_h[F]=(\mathcal{E}_{h,i}[F])_{i=0}^{8} \colon [0,T-h^2] \times \Omega_h \to \R^9
\end{equation}
defined by
\begin{equation}\label{def:E}
    \mathcal{E}_h[F](t)
    \coloneqq
    \widetilde{S} F(t+h^2)-\mathscr{C} F(t).
\end{equation}
In components, the definition can be written as
\begin{equation}
    \mathcal{E}_{h,i}[F](t,z)
    =
    \begin{dcases}
        F_i(t+h^2,z+c_i h)-(\mathscr{C}F(t,z))_i & (z+c_i h \in \Omega_h), \\
        F_{i^*}(t+h^2,z)-(\mathscr{C}F(t,z))_i & (z+c_i h \notin \Omega_h)
    \end{dcases}
\end{equation}
The total residual $\mathcal{E}_h[F]$ can be decomposed as
\begin{equation}\label{eq:E_rR}
    \mathcal{E}_h[F] = r_h[F] + R_h[F],
\end{equation}
where
\begin{align}
    r_h[F]
    =(r_{h,i}[F])_{i=0}^{8} & \colon [0,T-h^2] \times \Omega_h \to \R^9, \\
    R_h[F]
    =(R_{h,i}[F])_{i=0}^{8} & \colon [0,T-h^2] \times \Omega_h \to \R^9
\end{align}
are defined by
\begin{align}
    r_{h,i}[F](t,z)
    & \coloneqq
    \begin{dcases}
        \mathcal{E}_{h,i}[F](t,z) & (z+c_i h \in \Omega_h), \\
        0 & (z+c_i h \notin \Omega_h),
    \end{dcases}
    \label{def:interior-residual}
    \\
    R_{h,i}[F](t,z)
    & \coloneqq
    \begin{dcases}
        0 & (z+c_i h \in \Omega_h), \\
        \mathcal{E}_{h,i}[F](t,z) & (z+c_i h \notin \Omega_h).
    \end{dcases}
    \label{def:boundary-residual}
\end{align}
The functions $r_h[F]$ and $R_h[F]$ are called the interior residual and the boundary residual, respectively. We note that these are related to the functions $\hat{r}_{h}^{n},\widehat{R}_{h}^{n} \in X_h$ appearing in Proposition~\ref{prop:Junk_error} as follows:
\begin{equation}
    \hat{r}_{h}^{n}(z)
    =
    r_h[\hat{f}_h](t_n,z),
    \qquad
    \widehat{R}_{h}^{n}(z)
    =
    R_h[\hat{f}_h](t_n,z).
\end{equation}
For $x\in\T_h$, we write
\begin{equation}
    z_x^-
    \coloneqq
    \left(x,\frac h2\right),
    \qquad
    z_x^+
    \coloneqq
    \left(x,1-\frac h2\right).
\end{equation}
We also introduce
\begin{align}
    R_h^{-}[F]=(R_{h,i}^{-}[F])_{i \in \{ N,NE,NW \}}
    & \colon [0,T-h^2] \times \T_h \to \R^3, \\
    R_h^{+}[F]=(R_{h,i}^{+}[F])_{i \in \{ S,SW,SE \}}
    & \colon [0,T-h^2] \times \T_h \to \R^3
\end{align}
defined by
\begin{align}\label{eq:full_to_partial_residual}
\begin{aligned}
    R_{h,i}^{-}[F](t,x)
    & \coloneqq
    R_{h,i^*}[F](t,z_{x}^{-}), \qquad i \in \{ N,NE,NW \}, \\
    R_{h,i}^{+}[F](t,x)
    & \coloneqq
    R_{h,i^*}[F](t,z_{x}^{+}), \qquad i \in \{ S,SW,SE \}.
\end{aligned}
\end{align}
We remind the reader that $i^*$ is the index such that $c_{i^*}=-c_i$.

We next state two lemmas on asymptotic expansions of the interior and boundary residuals of $\mathcal{P}_h[U,P]$ defined above. To this end, we introduce two operators $\mathcal{C}_{\pm}$. For $G=(G_x,G_y)^T \in \R^2$, let
\begin{equation}\label{eq:def-Cm}
    \mathcal C_-G
    \coloneqq
    \begin{pmatrix}
        F_{N}^{\mathrm{eq}}(0,G) - F_{S}^{\mathrm{eq}}(0,G) \\
        F_{NE}^{\mathrm{eq}}(0,G) - F_{SW}^{\mathrm{eq}}(0,G) \\
        F_{NW}^{\mathrm{eq}}(0,G) - F_{SE}^{\mathrm{eq}}(0,G)
    \end{pmatrix}
    =
    \begin{pmatrix}
        \dfrac23G_y\\[1mm]
        \dfrac16(G_x+G_y)\\[1mm]
        \dfrac16(-G_x+G_y)
    \end{pmatrix}
\end{equation}
and
\begin{equation}\label{eq:def-Cp}
    \mathcal C_+G
    \coloneqq
     \begin{pmatrix}
        F_{S}^{\mathrm{eq}}(0,G) - F_{N}^{\mathrm{eq}}(0,G) \\
        F_{SW}^{\mathrm{eq}}(0,G) - F_{NE}^{\mathrm{eq}}(0,G) \\
        F_{SE}^{\mathrm{eq}}(0,G) - F_{NW}^{\mathrm{eq}}(0,G)
    \end{pmatrix}
    =
    \begin{pmatrix}
        -\dfrac23G_y\\[1mm]
        -\dfrac16(G_x+G_y)\\[1mm]
        \dfrac16(G_x-G_y)
    \end{pmatrix}.
\end{equation}
Let $(U,P)$ and $(W,Q)$ be the functions defined by~\eqref{eq:Stokes} and~\eqref{eq:higher_Stokes}, respectively, and write
\begin{equation}
    r_h[U,P]
    \coloneqq
    r_h[\mathcal{P}_h[U,P]],
    \qquad
    R_h^{\pm}[U,P]
    \coloneqq
    R_h^{\pm}[\mathcal{P}_h[U,P]].
\end{equation}
Denote by
\begin{equation}
    G^{\pm}
    \colon
    [0,T] \times \T \to \R^2
\end{equation}
the boundary data of $U$, that is,
\begin{equation}
    G^{-}(t,x)
    \coloneqq
    U(t,x,0),
    \qquad
    G^{+}(t,x)
    \coloneqq
    U(t,x,1).
\end{equation}

\begin{lemma}[Homogeneous boundary data]\label{lem:pred_error}
    Assume that $G^{\pm} \equiv 0$. For the interior residual $r_h[U,P]$, we have
    \begin{equation}
        r_{h}[U,P]=O(h^6)
    \end{equation}
    uniformly in $[0,T-h^2] \times \Omega_h$. Moreover, for the boundary residuals $R_{h}^{\pm}[U,P]$, there exist smooth functions
    \begin{equation}
        R^{\pm,(3)}[U,P]
        \colon
        [0,T] \times \T \to \R^3
    \end{equation}
    independent of $h>0$ such that
    \begin{equation}
        R_{h}^\pm[U,P]
        =
        h^3 R^{\pm,(3)}[U,P]
        +
        O(h^4)
    \end{equation}
    uniformly in $[0,T-h^2] \times \T_h$.
\end{lemma}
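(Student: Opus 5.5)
The plan is to repeat the consistency analysis behind Proposition~\ref{prop:Junk_error}, now with $(u,p),(w,q)$ replaced by $(U,P),(W,Q)$, and to keep track of the $h$-dependence of the boundary terms.

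\textbf{Interior residual.} Fix $z\in\Omega_h$ and $i$ with $z+c_ih\in\Omega_h$. Since $\mathcal{P}_h[U,P]$ is defined on all of $[0,T]\times\Omega$, I Taylor expand $\mathcal{P}_{h,i}[U,P](t+h^2,z+c_ih)$ about $(t,z)$ in powers of $h$:
\[
\sum_{l\ge0}h^l D_l\,\mathcal{P}_{h,i}[U,P](t,z),
\]
with $D_0=I$. I then collect the coefficients of $h^0,\dots,h^5$ in
\[
\sum_l h^lD_l\mathcal{P}_h-\mathscr{C}\mathcal{P}_h .
\]
Because $\mathscr{C}=I+J$ and $J$ vanishes on equilibria, the order-$k$ coefficient reads
\[
\sum_{l=1}^{k}D_l\hat f^{(k-l)}-J\hat f^{(k)}.
\]
The recursion~\eqref{eq:Junk_prediction} is designed so that $J\hat f^{(k)}$ equals this sum, up to the equilibrium part. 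The equilibrium part is killed by $\Pi$ precisely when its density and momentum moments vanish. Those solvability conditions, taken at orders $3$ and $4$, are the Stokes system~\eqref{eq:Stokes} for $(U,P)$ and the modified system~\eqref{eq:higher_Stokes} for $(W,Q)$. This is the computation of~\cite{JunkYang2005,JunkYang2008}, and it does not use the boundary values of $U$. The order-$6$ Taylor remainder is bounded by $C_T h^6$, using $C^5$ regularity of $(U,P)$ together with enough regularity of $(W,Q)$. Hence $r_h[U,P]=O(h^6)$ uniformly.

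\textbf{Boundary residual.} Consider the lower wall, so $z=z_x^-$ and $i\in\{S,SW,SE\}$ with $i^*\in\{N,NE,NW\}$. The residual is
\[
\mathcal{P}_{h,i^*}(t+h^2,z_x^-)-\bigl(\mathscr{C}\mathcal{P}_h(t,z_x^-)\bigr)_i .
\]
I rewrite $(\mathscr{C}\mathcal{P}_h)_i$ using the interior identity just proved, applied at the fictitious node $z_x^-+c_ih$. This node lies at height $-h/2$, outside $\Omega$, so I either extend $(U,P,W,Q)$ smoothly across $y=0$ or reexpand around the wall point $(x,0)$. The residual then becomes
\[
\mathcal{P}_{h,i^*}(t+h^2,z_x^-)-\mathcal{P}_{h,i}(t+h^2,z_x^-+c_ih)+O(h^6).
\]
I Taylor expand both terms about the wall point $(t,x,0)$. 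Since the two nodes are symmetric about that point, the displacements are $\pm c_{i^*}h/2$ in space together with a shift $h^2$ in time.

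\textbf{Low orders.} At order $h^0$, the $i$ and $i^*$ components of $F^{\mathrm{eq}}(1,0)$ agree. At order $h^1$, the difference of first-order equilibria is $-\mathcal{C}_-G^-$, which is zero by the assumption $G^-\equiv0$. At order $h^2$ there are two kinds of terms. The pressure terms $F^{\mathrm{eq}}(3P,0)$ are even in $c$ and cancel. The gradient terms combine the $-\tau (V\cdot\nabla)\hat f^{(1)}$ contributions with the half-step Taylor shifts $\pm\tfrac{h}{2}(c\cdot\nabla)\hat f^{(1)}$. Both are odd in $c$ in a way that makes the combination even. Because $(c_i\cdot\nabla)$ and $(c_{i^*}\cdot\nabla)$ carry opposite signs while $F^{\mathrm{eq}}(0,U)$ is odd, the order-$2$ contributions cancel. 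This is the familiar second-order accuracy of half-way bounce-back.

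\textbf{Leading term and the main obstacle.} The first surviving term is at order $h^3$. It is a polynomial combination of derivatives at $(t,x,0)$ of $U$, $P$, and $W$ up to order $2$, and $W|_{y=0}=0$. I define $R^{-,(3)}[U,P](t,x)$ to be this polynomial. It is smooth and independent of $h$, and the $O(h^4)$ remainder is uniform in $[0,T-h^2]\times\T_h$. The upper wall is handled by the symmetric argument with $\mathcal{C}_+$ and $z_x^+$.

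I expect the main obstacle to be the order-$2$ cancellation together with the extension issue at the ghost node. Every evaluation point must be treated consistently through expansions about the wall, and the order-$1$ and order-$2$ tangential-derivative terms of $U$ must be seen to vanish because $G^-\equiv0$ implies $\partial_xU=0$ on the wall. I would organize these computations componentwise for $N$, $NE$, $NW$. Apart from that, the argument is bookkeeping.
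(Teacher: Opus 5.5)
Your proposal is correct and follows essentially the paper's route. The paper simply defers to the Taylor-expansion consistency analysis of Junk and Yang, which you reproduce: the interior residual cancels through the moment (solvability) conditions of the recursion, and the boundary residual is reduced via the interior identity (with a smooth extension across the wall) to $\mathcal{P}_{h,i^*}(t+h^2,z_x^-)-\mathcal{P}_{h,i}(t+h^2,z_x^-+c_ih)+O(h^6)$ and then expanded about the wall. Two slips are harmless: the solvability conditions sit at orders $2$--$5$ (density at even orders, momentum at odd orders), and for diagonal $i$ the two nodes are symmetric about $(x+c_{ix}h/2,0)$ rather than $(x,0)$, which only moves the $h^3$ coefficient by $O(h^4)$ (about that midpoint, the parity $f^{(k)}_{i^*}=(-1)^k f^{(k)}_i$ makes all even orders, in particular $h^2$, cancel).
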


\begin{proof}
    The proof is basically an application of Taylor's theorem to $\mathcal{P}_h[U,P]$. For details, we refer to the proof of Proposition~\ref{prop:Junk_error} in~\cite{JunkYang2008}, which further relies on the asymptotic analysis in~\cite{JunkYang2005}.
\end{proof}

\begin{lemma}[General boundary data]\label{lem:pred_error_inhom}
    For the interior residual $r_h[U,P]$, we have
    \begin{equation}
        r_{h}[U,P]=O(h^6)
    \end{equation}
    uniformly in $[0,T-h^2] \times \Omega_h$. For the boundary residuals $R_{h}^{\pm}[U,P]$, we have
    \begin{equation}\label{eq:residual_Cpm}
        R_{h}^\pm[U,P]
        =
        h\mathcal{C}_{\pm} G^{\pm}
        +
        O(h^2)
    \end{equation}
    uniformly in $[0,T-h^2] \times \T_h$.
\end{lemma}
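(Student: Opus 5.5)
The plan is to treat the two claims separately. For the interior residual, the key observation is that $r_h[U,P]$ only involves directions $i$ with $z+c_ih\in\Omega_h$. There the residual is $\hat f_i(t+h^2,z+c_ih)-(\mathscr C\hat f(t,z))_i$, which is a purely local expression in the smooth fields $(U,P,W,Q)$ and their derivatives at $(t,z)$. The boundary values of $U$ never enter. Hence the Taylor expansion argument behind Lemma~\ref{lem:pred_error} (i.e.\ the interior part of Proposition~\ref{prop:Junk_error}, from~\cite{JunkYang2008,JunkYang2005}) applies verbatim. Concretely, I expand $\hat f_i(t+h^2,z+c_ih)=\sum_{a,b}h^{2a+b}\partial_t^a(c_i\cdot\nabla)^b\hat f_i/(a!b!)$ and group by powers of $h$. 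The orders $h^0,\dots,h^5$ then cancel exactly by the recursive definition~\eqref{eq:Junk_prediction} together with the facts that $(U,P)$ solves~\eqref{eq:Stokes} and $(W,Q)$ solves the first two equations of~\eqref{eq:higher_Stokes}. Only the PDEs in the interior are used, never $G^\pm=0$. The Taylor remainder is $O(h^6)$ uniformly, given sufficient smoothness of $(U,P)$ and of $(W,Q)$ up to the boundary.

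For the boundary residual, I work at $z=z_x^-$ with $i\in\{S,SW,SE\}$, so that $i^*\in\{N,NE,NW\}$. By definition,
\[
R_{h,i^*}^-[U,P](t,x)=\hat f_{i^*}(t+h^2,z_x^-)-(\mathscr C\hat f(t,z_x^-))_{i}.
\]
The step I need is to rewrite this using the interior identity just established. The ghost point $z_x^-+c_ih=(x+c_{i,x}h,-h/2)$ lies outside $\Omega$, but the fields extend smoothly a distance $O(h)$ past the wall; alternatively one can Taylor expand directly, which is what I would actually do. The consequence is that $(\mathscr C\hat f(t,z))_i=\hat f_i(t+h^2,z+c_ih)+O(h^6)$, so
\[
R_{h,i^*}^-=\hat f_{i^*}(t+h^2,z_x^-)-\hat f_i(t+h^2,z_x^-+c_ih)+O(h^6).
\]
Both points lie within $h/2$ of the wall point $(x,0)$, up to an $O(h)$ shift in $x$. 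So I Taylor expand each around $(t,x,0)$ to order $h^1$, which leaves an $O(h^2)$ remainder:
\[
\hat f_{i^*}-\hat f_i=\bigl(F^{\mathrm{eq}}_{i^*}(1,0)-F^{\mathrm{eq}}_i(1,0)\bigr)+h\bigl(F^{\mathrm{eq}}_{i^*}(0,U)-F^{\mathrm{eq}}_i(0,U)\bigr)\big|_{(t,x,0)}+O(h^2).
\]
The zeroth-order term vanishes because $w_i=w_{i^*}$. The first-order term is exactly the corresponding entry of $\mathcal C_-G^-$ from~\eqref{eq:def-Cm}, since $F^{\mathrm{eq}}_{i^*}(0,G)=-F^{\mathrm{eq}}_i(0,G)$ and the ordering $N,NE,NW$ matches $i^*$. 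The top wall $z_x^+$ is symmetric and produces $\mathcal C_+G^+$. All constants are uniform in $t\in[0,T-h^2]$ and $x\in\T_h$ by the assumed regularity.

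The main obstacle is justifying the rewrite of the collision output through the streamed interior identity when the streaming target lies outside $\Omega$. If extension is awkward, I would instead expand $\mathscr C\hat f$ directly. At order $h^0$ it gives $F^{\mathrm{eq}}(1,0)$, and at order $h^1$ it gives $\Pi\hat f^{(1)}+\gamma(I-\Pi)\hat f^{(1)}=F^{\mathrm{eq}}(0,U)$, because $\hat f^{(1)}$ is an equilibrium. So only the order-$h^0$ and $h^1$ terms need checking, and the $O(h^2)$ accuracy demanded by~\eqref{eq:residual_Cpm} is exactly what this crude expansion delivers.
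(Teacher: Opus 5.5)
Your proposal is correct. The fallback in your last paragraph is essentially the paper's proof. The paper writes $\mathcal P_h[U,P]=F^{\mathrm{eq}}(1,0)+hF^{\mathrm{eq}}(0,U)+O(h^2)$ and uses $\mathscr C F^{\mathrm{eq}}(\rho_0,j_0)=F^{\mathrm{eq}}(\rho_0,j_0)$. This gives $R_{h,i}=h\{F^{\mathrm{eq}}_{i^*}(0,U(t,z))-F^{\mathrm{eq}}_i(0,U(t,z))\}+O(h^2)$ at a boundary node, after which $U(t,z_x^\pm)$ is replaced by $G^\pm(t,x)+O(h)$. The interior claim is handled exactly as you propose: it is deferred to the Taylor argument behind Lemma~\ref{lem:pred_error}, which never uses the boundary values.

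Your primary route genuinely differs. You rewrite $(\mathscr C\hat f(t,z))_i$ as the streamed value at the ghost node $z+c_ih$, up to $O(h^6)$. This is also valid, provided you extend $(U,P,W,Q)$ smoothly across the wall. The cancellations at orders $h^0,\dots,h^5$ use the PDEs only at the base point $(t,z)\in\Omega$, and the Taylor remainder needs only smoothness of the extension along the segment to the ghost node.

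For the $O(h^2)$ claim, however, this route is overkill. You invoke sixth-order consistency and an extension argument only to discard everything beyond order $h$, whereas the direct expansion needs neither. What the ghost-node identity, for $i\in\{S,SW,SE\}$,
\[
R_{h,i^*}^-[U,P](t,x)=\hat f_{i^*}(t+h^2,z_x^-)-\hat f_i(t+h^2,z_x^-+c_ih)+O(h^6)
\]
does buy is a systematic way to expand the boundary residual through order $h^5$, by Taylor expanding a difference of two prediction values about the wall point $(x,0)$. That would be the natural tool for computing coefficients such as $R^{\pm,(3)}$ or $R^{\pm,[1,1,1],(4)}$ explicitly. The paper's cruder expansion cannot provide this, and the paper only asserts those coefficients' existence.
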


\begin{proof}
    The estimate of the interior residual $r_{h}[U,P]$ can be done similarly to the case of $G^{\pm} \equiv 0$ in Lemma~\ref{lem:pred_error}. Here, let us only consider the boundary residuals. By the definition of $\mathcal{P}_h[U,P]$, we have
    \begin{equation}
        \mathcal{P}_h[U,P]
        =
        F^{\mathrm{eq}}(1,0)
        +
        h F^{\mathrm{eq}}(0,U)
        +
        O(h^2).
    \end{equation}
    Therefore, for $0 \leq t \leq T-h^2$, $i \in \{ 0,\ldots,8 \}$, and $z \in \Omega_h$ with $z+c_i h \notin \Omega_h$, we have
    \begin{align}
        R_{h,i}[\mathcal{P}_h[U,P]](t,z)
        & =
        \mathcal{P}_{h,i^*}[U,P](t+h^2,z)
        -
        (\mathscr C \mathcal{P}_h[U,P](t,z))_i \\
        & =
        F_{i^*}^{\mathrm{eq}}(1,0)
        -
        (\mathscr C F^{\mathrm{eq}}(1,0))_i
        \\
        & \qquad
        +h \left\{ F_{i^*}^{\mathrm{eq}}(0,U(t,z))
        -
        (\mathscr C F^{\mathrm{eq}}(0,U(t,z)))_i \right\}
        +
        O(h^2) \\
        & =
        h\left\{ F_{i^*}^{\mathrm{eq}}(0,U(t,z)) - F_{i}^{\mathrm{eq}}(0,U(t,z)) \right\}
        +
        O(h^2).
    \end{align}
    Here, we used $\mathscr{C}F^{\mathrm{eq}}(\rho_0,j_0)=F^{\mathrm{eq}}(\rho_0,j_0)$ for any $\rho_0 \in \R$ and $j_0 \in \R^2$. Since
    \begin{equation}
        U(t,z_{x}^{\pm})
        =
        G^{\pm}(t,x)+O(h)
    \end{equation}
    uniformly in $(t,x) \in [0,T] \times \T_h$ by Taylor's theorem, taking into account~\eqref{eq:full_to_partial_residual},~\eqref{eq:def-Cm}, and~\eqref{eq:def-Cp}, we obtain~\eqref{eq:residual_Cpm}.
\end{proof}

In particular, applying Lemma~\ref{lem:pred_error} to the prediction function $\hat{f}_h$ defined by~\eqref{eq:Junk_prediction} and~\eqref{eq:Ph}, we immediately obtain the following proposition.

\begin{proposition}\label{prop:leading-boundary}
    Let
    \begin{equation}
        \hat{r}_{h} \coloneqq r_h [ \hat{f}_h ],
        \qquad
        \widehat{R}_{h}^{\pm}
        \coloneqq
        R_{h}^{\pm}[\hat{f}_h].
    \end{equation}
    For the interior residual $\hat{r}_h$, we have
    \begin{equation}\label{eq:boundary-leading}
        \hat{r}_{h}=O(h^6)
    \end{equation}
    uniformly in $[0,T-h^2] \times \Omega_h$. Moreover, for the boundary residuals $\widehat R_{h}^{\pm}$, there exist smooth functions
    \begin{equation}
        R^{\pm,(3)}
        \colon
        [0,T] \times \T \to \R^3
    \end{equation}
    independent of $h>0$ such that
    \begin{equation}\label{eq:boundary-expansion-pm}
        \widehat R_{h}^\pm
        =
        h^3 R^{\pm,(3)}
        +
        O(h^4)
    \end{equation}
    uniformly in $[0,T-h^2] \times \T_h$.
\end{proposition}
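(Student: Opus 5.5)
This is a direct specialization of Lemma~\ref{lem:pred_error}, so the plan is to check that $\hat f_h$ is of the form $\mathcal{P}_h[U,P]$ with homogeneous boundary data and then read off the conclusions.

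\emph{Identifying $\hat f_h$ with $\mathcal{P}_h[u,p]$.} Take $(U,P)=(u,p)$, the solution of~\eqref{eq:stokes-intro}. It solves~\eqref{eq:Stokes} with periodicity in $x$ and $\int_\Omega p\dd x\dd y=0$, so it is admissible in the definition of $\mathcal{P}_h$. Choose $W_{\mathrm{in}}=w_{\mathrm{in}}$, the initial datum used in~\eqref{eq:Junk_higher_Stokes}. Then~\eqref{eq:higher_Stokes} coincides with~\eqref{eq:Junk_higher_Stokes}, and by uniqueness for the (modified) Stokes problem with the normalization of $Q$, we get $(W,Q)=(w,q)$. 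The remark that $W_{\mathrm{in}}$ should depend linearly on $(U,P)$ matters only for the later use of $\mathcal{P}_h$ on correctors; here we only need that $w_{\mathrm{in}}$ is some compatible datum, which~\eqref{eq:Junk_higher_Stokes} already requires. Since $\hat f^{(k)}[u,p]$ is defined by~\eqref{eq:Junk_prediction} with exactly these inputs, $\hat f^{(k)}[u,p]=f^{(k)}$ for $k=0,\dots,5$. By~\eqref{eq:Ph} and~\eqref{def:P} this gives $\hat f_h=\mathcal{P}_h[u,p]$, and hence
\[
\hat r_h=r_h[u,p], \qquad \widehat R_h^\pm=R_h^\pm[u,p].
\]

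\emph{Applying Lemma~\ref{lem:pred_error}.} The boundary data are $G^\pm(t,x)=u(t,x,0)$ and $u(t,x,1)$, and both vanish by the no-slip condition in~\eqref{eq:stokes-intro}. So the hypothesis $G^\pm\equiv0$ of Lemma~\ref{lem:pred_error} holds. The lemma then gives $\hat r_h=O(h^6)$ uniformly on $[0,T-h^2]\times\Omega_h$, which is~\eqref{eq:boundary-leading}. It also gives~\eqref{eq:boundary-expansion-pm} with $R^{\pm,(3)}\coloneqq R^{\pm,(3)}[u,p]$. These functions are smooth and independent of $h$, and uniformity holds on $[0,T-h^2]\times\T_h$.

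\emph{Where care is needed.} The only point needing attention is regularity. The $O(\cdot)$ constants in Lemma~\ref{lem:pred_error} come from Taylor remainders of $f^{(k)}$, which contain derivatives of $(u,p,w,q)$ up to roughly order five or six. We must check that the $C^5$ (or smooth) hypothesis on $(u,p)$, together with high-order compatibility of $w_{\mathrm{in}}$, makes $(w,q)$ regular enough up to $\partial\Omega$ on $[0,T]$. This is the same regularity used in Proposition~\ref{prop:Junk_error}, so I would simply invoke it. The constants then depend only on $T$, through these norms.
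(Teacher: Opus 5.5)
Your proposal is correct and matches the paper's argument. The paper obtains this proposition immediately by recognizing $\hat f_h=\mathcal{P}_h[u,p]$ and applying Lemma~\ref{lem:pred_error}, whose hypothesis $G^\pm\equiv 0$ holds by the no-slip condition in~\eqref{eq:stokes-intro}; your checks that $W_{\mathrm{in}}=w_{\mathrm{in}}$ gives $(W,Q)=(w,q)$, and on regularity, are sound bookkeeping that the paper leaves implicit.
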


\subsection{First Stokes corrector}\label{sec:first-stokes}

In this section, we construct our first corrector to the prediction function $\hat{f}_h$ as a solution to the Stokes equations with appropriately chosen Dirichlet boundary data. The role of the corrector is to cancel the macroscopic part of the term $h^3 R^{\pm,(3)}$ in~\eqref{eq:boundary-expansion-pm}.

Let
\begin{equation}\label{eq:def-kappa-minus-plus}
    \kappa_-
    \coloneqq
    \begin{pmatrix}
        2\\-1\\-1
    \end{pmatrix},
    \qquad
    \kappa_+
    \coloneqq
    \begin{pmatrix}
        2\\-1\\-1
    \end{pmatrix}.
\end{equation}
Their embeddings into $\R^9$ are denoted by adding tildes:
\begin{equation}\label{eq:kappa-embeddings}
    \tilde \kappa_-
    \coloneqq
    2e_N-e_{NE}-e_{NW},
    \qquad
    \tilde \kappa_+
    \coloneqq
    2e_S-e_{SW}-e_{SE},
\end{equation}
where $(e_i)_{i=0}^{8}$ is the standard basis of $\R^9$. They have zero density and zero momentum:
\begin{equation}\label{eq:kappa-zero-moments}
    \sum_{i=0}^8\tilde \kappa_{\pm,i}=0,
    \qquad
    \sum_{i=0}^8c_i \tilde \kappa_{\pm,i}=0.
\end{equation}
We note that the maps $\mathcal C_\pm$ defined by~\eqref{eq:def-Cm} and~\eqref{eq:def-Cp} are injective and that $\lambda \coloneqq (1,-2,-2)^T \in (\operatorname{Ran} \mathcal{C}_{\pm})^{\perp}$. Since $\lambda \cdot \kappa_{\pm} = 6 \neq 0$, we have the direct sum decomposition
\begin{equation}\label{eq:boundary-direct-sum}
    \R^3
    =
    \operatorname{Ran}\mathcal C_\pm
    \oplus
    \operatorname{span}\{\kappa_\pm\}.
\end{equation}
Hence there exist unique functions
\[
    g_0^\pm \colon [0,T] \times \T \to \R^2,
    \qquad
    \alpha_\pm \colon [0,T] \times \T \to \R
\]
such that
\begin{equation}\label{eq:def-g0-alpha-pm}
    R^{\pm,(3)}+\mathcal C_\pm g_0^\pm
    =
    \alpha_\pm\kappa_\pm,
\end{equation}
where $R^{\pm,(3)}$ are the functions in Proposition~\ref{prop:leading-boundary}. We prove that $g_{0}^{\pm}$ satisfy the admissibility condition for the Stokes equations~\eqref{eq:first-Stokes-corrector}, namely, that the total outward flux vanishes.

\begin{lemma}\label{lem:first-flux-compatibility}
    The functions $g_0^\pm$ satisfy
    \begin{equation}\label{eq:first-flux-compatibility}
        -\int_{\T}
            (g_0^-)_y(t,x) \dd x
        +
        \int_{\T}
            (g_0^+)_y(t,x) \dd x
        =
        0
    \end{equation}
    for every $t\in[0,T]$.
\end{lemma}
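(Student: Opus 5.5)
The plan is to reduce the flux condition to a statement about the boundary residual $R^{\pm,(3)}$ alone, and then to derive that statement from exact mass conservation of the bounce-back scheme.

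\textbf{Step 1: a functional that sees only $G_y$.} Take $\mu\coloneqq(1,1,1)^T$. Then $\mu\cdot\kappa_\pm=2-1-1=0$. From \eqref{eq:def-Cm} and \eqref{eq:def-Cp},
\[
\mu\cdot\mathcal C_-G=\tfrac23G_y+\tfrac13G_y=G_y,
\qquad
\mu\cdot\mathcal C_+G=-G_y .
\]
Pairing \eqref{eq:def-g0-alpha-pm} with $\mu$ therefore gives
\[
(g_0^-)_y=-\mu\cdot R^{-,(3)},
\qquad
(g_0^+)_y=\mu\cdot R^{+,(3)}.
\]
Substituting these, \eqref{eq:first-flux-compatibility} becomes
\[
\int_{\T}\bigl(\mu\cdot R^{-,(3)}+\mu\cdot R^{+,(3)}\bigr)(t,x)\dd x=0 .
\]
This says that the total mass defect of the leading boundary residual vanishes.

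\textbf{Step 2: sum the total residual over the whole lattice.} Sum $\mathcal E_h[\hat f_h](t)$ over all $z\in\Omega_h$ and all $i$. The operator $\widetilde S$ only permutes the entries of an element of $X_h$: each pair $(z,i)$ with $z+c_ih\notin\Omega_h$ picks up $f_{i^*}(z)$, and the remaining pairs realize a bijection. Also, $\mathscr C$ preserves density, since $\rho_{\Pi f}=\rho_f$. Hence
\[
\sum_{z}\sum_i\mathcal E_{h,i}[\hat f_h](t,z)=\sum_z\bigl(\rho_{\hat f_h}(t+h^2,z)-\rho_{\hat f_h}(t,z)\bigr).
\]
By \eqref{eq:Ph_moments}, the right-hand side equals $3h^2\sum_z\bigl(p(t+h^2)-p(t)\bigr)+3h^4\sum_z\bigl(q(t+h^2)-q(t)\bigr)$. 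This is $3h^4\sum_z\partial_tp+O(h^4)$.

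The lattice sum $h^2\sum_z\partial_tp$ is a midpoint rule. It is exact up to spectral accuracy in $x$ by periodicity and accurate to $O(h^2)$ in $y$ because the $y$-nodes are cell centers. It therefore equals $\partial_t\int_\Omega p+O(h^2)=O(h^2)$, using the normalization of $p$. So the whole sum is $O(h^4)$.

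\textbf{Step 3: split the sum into interior and boundary parts.} Use \eqref{eq:E_rR} with Proposition~\ref{prop:leading-boundary}. The interior residual contributes at most $O(h^6)\cdot O(h^{-2})=O(h^4)$. The boundary part is supported on the $2M$ sites $z_x^\pm$. By \eqref{eq:full_to_partial_residual} its components are exactly those of $\widehat R_h^\pm$, so it contributes
\[
\sum_{x\in\T_h}\mu\cdot\bigl(\widehat R_h^-+\widehat R_h^+\bigr)
=h^3\sum_{x\in\T_h}\mu\cdot\bigl(R^{-,(3)}+R^{+,(3)}\bigr)+O(h^3).
\]
Comparing the two computations and dividing by $h^2$ gives
\[
h\sum_{x\in\T_h}\mu\cdot\bigl(R^{-,(3)}+R^{+,(3)}\bigr)(t,x)=O(h).
\]
Since $R^{\pm,(3)}$ are smooth, the left-hand side converges to the integral in Step 1 as $h=1/M\to0$. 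That integral is therefore zero, for each $t$; for $t$ not of the form $nh^2$, one uses continuity in $t$ of the residual expansion.

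\textbf{Main obstacle.} The delicate point is the bookkeeping in Step 2. One has to check that bounce-back streaming is genuinely a permutation of all the $9|\Omega_h|$ entries, so that mass is conserved exactly. One also has to track the $h$-powers carefully: the $O(h^4)$ errors must be strictly smaller than the $h^2\int\mu\cdot R^{(3)}$ term, and this is what uses the cell-centered midpoint accuracy together with $\int_\Omega p=0$. If some hidden $O(h^3)$ term in the density sum survived, I would refine the argument with one more order of the expansion of $\hat f_h$.
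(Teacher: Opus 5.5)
Your proposal is correct and follows essentially the same route as the paper. Both arguments pair the defining relation for $g_0^\pm$ with $\mu=(1,1,1)^T$ to reduce the flux condition to $\int_{\T}\mu\cdot\bigl(R^{-,(3)}+R^{+,(3)}\bigr)\dd x=0$, and then obtain this from exact mass conservation of $\widetilde S$ and $\mathscr C$, the $O(h^6)$ interior residual, and the zero mean of $p$ via the midpoint rule. The only differences are cosmetic: you use unnormalized lattice sums (the paper's $M_h$ carries a factor $h^2$, so your $O(h^4)$ bounds are its $O(h^6)$), and you spell out why bounce-back streaming permutes the entries, which the paper merely asserts.
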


\begin{proof}
    Let
    \begin{equation}\label{eq:mu}
        \mu(R)
        \coloneqq
        R_a+R_b+R_c, \qquad R=
        \begin{pmatrix}
            R_a \\
            R_b \\
            R_c
        \end{pmatrix}
        \in \R^3.
    \end{equation}
    Then since
    \[
        \mu(\mathcal C_{\pm} g_{0}^{\pm})=\mp (g_{0}^{\pm})_y, \qquad
        \mu(\kappa_\pm)=0,
    \]
    equation~\eqref{eq:def-g0-alpha-pm} implies
    \begin{equation}\label{eq:g0-normal-flux}
        -(g_0^-)_y
        =
        \mu(R^{-,(3)}),
        \qquad
        (g_0^+)_y
        =
        \mu(R^{+,(3)}).
    \end{equation}
    For a lattice function $f \colon \Omega_h \to \R^9$, define its total mass by
    \begin{equation}\label{eq:operator_mass}
        M_h[f]
        \coloneqq
        h^2
        \sum_{z\in\Omega_h}
        \sum_{i=0}^8 f_i(z).
    \end{equation}
    It is easy to check that the streaming and collision processes preserve the total mass, i.e.,
    \begin{equation}
        M_h[\widetilde Sf]
        =
        M_h[f],
        \qquad
        M_h[\mathscr Cf]
        =
        M_h[f].
        \label{eq:mass-preservation-operators}
    \end{equation}
    Let $\hat{f}_h$ be the prediction function defined by~\eqref{eq:Ph}. We recall from~\eqref{eq:Ph_moments} that
    \begin{equation}\label{eq:prediction-density}
        \sum_{i=0}^8\hat f_{h,i}
        =
        1+3h^2p+3h^4q.
    \end{equation}
    By the periodicity in $x$ and the half-way configuration of our lattice sites, $h^2 \sum_{z \in \Omega_h}\varphi(z)$ is a second-order approximation of $\int_{\Omega}\varphi(x,y) \dd x \dd y$ for a function $\varphi \colon \Omega \to \R$. Since $p$ and $q$, and hence $\partial_t p$ and $\partial_t q$, have zero spatial mean, this implies
    \begin{equation}
        h^2
        \sum_{z\in\Omega_h}
        \partial_t p(t,z)
        =
        O(h^2),
        \qquad
        h^2
        \sum_{z\in\Omega_h}
        \partial_t q(t,z)
        =
        O(h^2).
    \end{equation}
    Consequently, by Taylor's theorem, we obtain
    \begin{align}\label{eq:fhat_time_taylor}
        \begin{aligned}
            M_h\left[ \hat f_h(t+h^2) \right]
            -
            M_h\left[ \hat f_h(t) \right]
            & =
            3h^2
            \left\{
                h^2 \sum_{z \in \Omega_h}
                \left(
                    p(t+h^2,z) - p(t,z)
                \right)
            \right\}
            \\
            & \qquad +
            3h^4
            \left\{
                h^2 \sum_{z \in \Omega_h}
                \left(
                    q(t+h^2,z) - q(t,z)
                \right)
            \right\}
            \\
            & =
            O(h^6).
        \end{aligned}
    \end{align}
    By definition, we have
    \begin{equation}
        \mathcal{E}_h[\hat{f}_h](t) = r_h[\hat{f}_h](t) + R_h[\hat{f}_h](t).
    \end{equation}
    Applying $M_h$ to this and using~\eqref{def:E} and~\eqref{eq:mass-preservation-operators}, we obtain
    \begin{equation}
        M_h[\hat{f}_h(t+h^2)] - M_h[\hat{f}_h(t)] = M_h[r_h[\hat{f}_h](t)] + M_h[R_h[\hat{f}_h](t)].
    \end{equation}
    Hence by~\eqref{eq:full_to_partial_residual},~\eqref{eq:boundary-leading},~\eqref{eq:fhat_time_taylor}, and the fact that $R_h[\hat{f}_h(t)]$ is supported on the set of $z \in \Omega_h$ with $z+c_i h \notin \Omega_h$, we get
    \begin{equation}
        h^2
        \sum_{j=0}^{M-1}
        \left\{
            \mu\left(\widehat R_{h}^-(t,jh)\right)
            +
            \mu\left(\widehat R_{h}^+(t,jh)\right)
        \right\}
        =
        O(h^6).
        \label{eq:boundary-mass-balance}
    \end{equation}
    Using~\eqref{eq:boundary-expansion-pm}, this becomes
    \[
        h^4
        \left[
            h\sum_{j=0}^{M-1}
            \left\{
                \mu(R^{-,(3)}(t,jh))
                +
                \mu(R^{+,(3)}(t,jh))
            \right\}
        \right]
        =
        O(h^5).
    \]
    Letting $h\to0$ gives
    \begin{equation}
        \int_{\T}
            \mu(R^{-,(3)}(t,x))\dd x
        +
        \int_{\T}
            \mu(R^{+,(3)}(t,x))\dd x
        =
        0.
    \end{equation}
    The conclusion follows from~\eqref{eq:g0-normal-flux}.
\end{proof}

Let $v_{0,\mathrm{in}}$ be a smooth divergence-free vector that is periodic in $x$. Taking $g_{0}^{\pm}$ as the boundary data, let $(v_0,\pi_0)$ be the solution to the Stokes equations
\begin{equation}\label{eq:first-Stokes-corrector}
\begin{dcases}
    \partial_t v_0+\nabla\pi_0
    =
    \nu\Delta v_0
    &\text{in $(0,T)\times\Omega$},
    \\
    \nabla\cdot v_0=0
    &\text{in $(0,T)\times\Omega$},
    \\
    v_0(t,x,0)=g_0^-(t,x)
    &\text{for $(t,x)\in(0,T)\times\T$},
    \\
    v_0(t,x,1)=g_0^+(t,x)
    &\text{for $(t,x)\in(0,T)\times\T$},
    \\
    v_0(0,\cdot)=v_{0,\mathrm{in}}
    &\text{in $\Omega$}
\end{dcases}
\end{equation}
with periodicity in $x$ and normalization $\int_\Omega\pi_0(t,x,y)\dd x\dd y=0$. The necessary admissibility condition is guaranteed by Lemma~\ref{lem:first-flux-compatibility}. The initial data $v_{0,\mathrm{in}}$ is required to satisfy the standard compatibility conditions of sufficiently high order. Set
\begin{equation}\label{eq:first-outer-correction}
    u^{[1]}
    \coloneqq
    u+h^2v_0,
    \qquad
    p^{[1]}
    \coloneqq
    p+h^2\pi_0.
\end{equation}
We refine the prediction function $\hat{f}_h$ to
\begin{equation}\label{eq:corrected_pred_1}
    \hat f_h^{[1]}
    \coloneqq
    \mathcal P_h[u^{[1]},p^{[1]}]
    =
    \mathcal P_h[u+h^2 v_0,p+h^2 \pi_0].
\end{equation}
Concerning the interior and the boundary residuals of $\hat{f}_{h}^{[1]}$, we have the following proposition.

\begin{proposition}\label{prop:asymptotics-[1]}
    Let
    \begin{equation}
        \hat{r}_{h}^{[1]} \coloneqq r_h [ \hat{f}_h^{[1]} ],
        \qquad
        \widehat{R}_{h}^{\pm,[1]}
        \coloneqq
        R_{h}^{\pm}[\hat{f}_h^{[1]}].
    \end{equation}
    For the interior residual $\hat{r}_h^{[1]}$, we have
    \begin{equation}\label{eq:interior-after-first-Stokes-kappa}
        \hat{r}_{h}^{[1]}=O(h^6)
    \end{equation}
    uniformly in $[0,T-h^2] \times \Omega_h$. Moreover, for the boundary residuals $\widehat R_{h}^{\pm,[1]}$, we have
    \begin{equation}\label{eq:boundary-after-first-Stokes-kappa}
        \widehat R_{h}^{\pm,[1]}
        =
        h^3 \alpha_{\pm} \kappa_{\pm}
        +
        O(h^4)
    \end{equation}
    uniformly in $[0,T-h^2] \times \T_h$, where $\alpha_{\pm}$ are defined by~\eqref{eq:def-g0-alpha-pm}.
\end{proposition}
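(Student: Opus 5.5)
The proof will use the affine structure of $\mathcal P_h$ together with the linearity of the residual operators. Since $\mathcal P_h[U,P]-F^{\mathrm{eq}}(1,0)$ is linear in $(U,P)$, and $\widetilde S$ and $\mathscr C$ are linear, we will write
\[
    \hat f_h^{[1]}
    =
    \mathcal P_h[u,p]
    +
    h^2\bigl(\mathcal P_h[v_0,\pi_0]-F^{\mathrm{eq}}(1,0)\bigr),
\]
where $\mathcal P_h[u,p]=\hat f_h$. For this to hold we need the auxiliary $(W,Q)$ built from $(u+h^2v_0,p+h^2\pi_0)$ to be the sum of those built from $(u,p)$ and $h^2(v_0,\pi_0)$. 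This follows from the linearity of \eqref{eq:higher_Stokes} and from the assumption that $W_{\mathrm{in}}$ depends linearly on $(U,P)$. Next we check that $F^{\mathrm{eq}}(1,0)$ has zero total residual: $\mathscr C F^{\mathrm{eq}}(1,0)=F^{\mathrm{eq}}(1,0)$, and $\widetilde S$ maps a constant field to itself, because in the bounce-back case $F^{\mathrm{eq}}_{i^*}(1,0)=F^{\mathrm{eq}}_i(1,0)$ since $w_{i^*}=w_i$. It follows that $\mathcal E_h$, $r_h$ and $R_h^{\pm}$ are additive across this splitting. Concretely, $\mathcal E_h[\mathcal P_h[v_0,\pi_0]-F^{\mathrm{eq}}(1,0)]=\mathcal E_h[\mathcal P_h[v_0,\pi_0]]$, and likewise for $r_h$ and $R_h^\pm$.

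For the interior residual, Proposition~\ref{prop:leading-boundary} gives $r_h[\hat f_h]=O(h^6)$. Lemma~\ref{lem:pred_error_inhom}, applied to $(U,P)=(v_0,\pi_0)$, gives $r_h[v_0,\pi_0]=O(h^6)$, and this still holds with nonzero boundary data $g_0^\pm$. After multiplying by $h^2$ the total is $O(h^6)$, which proves \eqref{eq:interior-after-first-Stokes-kappa}.

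For the boundary residual we combine three facts:
\[
    \widehat R_h^{\pm,[1]}
    =
    \widehat R_h^{\pm}
    +
    h^2 R_h^{\pm}[v_0,\pi_0],
    \qquad
    \widehat R_h^\pm = h^3R^{\pm,(3)}+O(h^4),
    \qquad
    R_h^\pm[v_0,\pi_0]=h\,\mathcal C_\pm g_0^\pm+O(h^2).
\]
The first is the additivity above. The second is Proposition~\ref{prop:leading-boundary}. The third is \eqref{eq:residual_Cpm} with $G^\pm=g_0^\pm$, which holds because $v_0$ has boundary data $g_0^\pm$ by \eqref{eq:first-Stokes-corrector}. Summing,
\[
    \widehat R_h^{\pm,[1]}
    =
    h^3\bigl(R^{\pm,(3)}+\mathcal C_\pm g_0^\pm\bigr)
    +
    O(h^4)
    =
    h^3\alpha_\pm\kappa_\pm+O(h^4)
\]
by the definition \eqref{eq:def-g0-alpha-pm}.

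The main obstacle is regularity: the $O(\cdot)$ constants in Lemma~\ref{lem:pred_error_inhom} must be uniform for $(v_0,\pi_0)$. This requires $(v_0,\pi_0)$ to be smooth enough, say $C^5$ up to the boundary with the corresponding $(W,Q)$ also regular. To secure this we need $g_0^\pm$ to be smooth, which holds because it is a fixed linear image of the smooth $R^{\pm,(3)}$ through the decomposition \eqref{eq:boundary-direct-sum}. We also need $v_{0,\mathrm{in}}$ to satisfy the compatibility conditions with $g_0^\pm(0)$, and we need the flux condition of Lemma~\ref{lem:first-flux-compatibility}, which is what makes \eqref{eq:first-Stokes-corrector} solvable. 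I would state these as standing assumptions and invoke standard Stokes regularity theory in the periodic channel. Beyond that, the only remaining check is the additivity of $\mathcal E_h$ across the splitting, including at the bounce-back sites.
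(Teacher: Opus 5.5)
Your proposal is correct and follows essentially the same route as the paper. You split $\hat f_h^{[1]}-\hat f_h=h^2\bigl(\mathcal P_h[v_0,\pi_0]-F^{\mathrm{eq}}(1,0)\bigr)$, use that the constant equilibrium has zero residual together with linearity of the residual operators, and then combine Proposition~\ref{prop:leading-boundary}, Lemma~\ref{lem:pred_error_inhom} with $G^\pm=g_0^\pm$, and \eqref{eq:def-g0-alpha-pm}. Your extra remarks on the linear dependence of $W_{\mathrm{in}}$ on $(U,P)$ and on the regularity of $(v_0,\pi_0)$ only make explicit assumptions that the paper leaves implicit.
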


\begin{proof}
    We first note that
    \begin{equation}\label{eq:fh1-fh}
        \hat{f}_{h}^{[1]}
        -
        \hat{f}_{h}
        =
        h^2 \left( \mathcal{P}_h[v_0,\pi_0] - F^{\mathrm{eq}}(1,0) \right).
    \end{equation}
    Since the constant equilibrium $F^{\mathrm{eq}}(1,0)$ has zero interior and boundary residuals, the linearity of the residual operators $r_h$ and $R_{h}^{\pm}$ gives
    \begin{equation}
        r_{h}[\hat{f}_{h}^{[1]}]
        -
        r_{h}[\hat{f}_h]
        =
        h^2 r_{h}[\mathcal{P}_h[v_0,\pi_0]]
    \end{equation}
    and
    \begin{equation}
        R_{h}^{\pm}[\hat{f}_{h}^{[1]}]
        -
        R_{h}^{\pm}[\hat{f}_h]
        =
        h^2 R_{h}^{\pm}[\mathcal{P}_h[v_0,\pi_0]].
    \end{equation}
    Then the interior estimate~\eqref{eq:interior-after-first-Stokes-kappa} follows immediately from Lemma~\ref{lem:pred_error_inhom} and~\eqref{eq:boundary-leading}. Similarly, the boundary estimate~\eqref{eq:boundary-after-first-Stokes-kappa} follows from Lemma~\ref{lem:pred_error_inhom},~\eqref{eq:boundary-expansion-pm}, and~\eqref{eq:def-g0-alpha-pm}.
\end{proof}

\subsection{First Knudsen layer correctors}\label{sec:phi0}

In this section, we construct Knudsen layer correctors. The role of the correctors is to cancel the $O(h^3)$ kinetic part $h^3 \alpha_{\pm} \kappa_{\pm}$ of the boundary residuals $\widehat{R}_{h}^{\pm,[1]}$ in~\eqref{eq:boundary-after-first-Stokes-kappa} while keeping the interior residual $\hat{r}_{h}^{[1]}$ at least $O(h^5)$. For this purpose, we need to introduce several discrete Knudsen layer profiles.

We remind the reader that we assume $\tau>1/2$ and $\gamma \in \R$ is defined by~\eqref{eq:def-gamma}. We set
\begin{equation}
    \Gamma
    \coloneqq
    |\gamma|, \qquad 0\leq\Gamma<1.
\end{equation}
We note that~\eqref{eq:kappa-zero-moments} implies $\rho_{\tilde \kappa_{\pm}}=0$ and $j_{\tilde \kappa_{\pm}}=0$. Hence
\begin{equation}\label{eq:collision-kappa}
    \mathscr C\tilde \kappa_\pm
    =
    \gamma \tilde \kappa_\pm.
\end{equation}
For $\ell \in \{ 0,\ldots,M-1 \}$, define the normal lattice distances from the two walls by
\begin{equation}
    m_-(\ell)\coloneqq\ell,
    \qquad
    m_+(\ell)\coloneqq M-1-\ell.
\end{equation}

\subsubsection{Principal Knudsen layer correctors}
\label{sec:first-non-tangential-Knudsen}
We define the principal Knudsen layer profiles by
\begin{equation}\label{eq:def-Phi0-pm}
    \Phi_0^\pm(t,x,m)
    \coloneqq
    -\alpha_\pm(t,x)\gamma^m \tilde \kappa_\pm,
    \qquad
    (t,x,m) \in [0,T] \times \T \times \mathbb N_0
\end{equation}
and their lattice realizations by
\begin{equation}\label{eq:def-Phi0h-pm}
    \Phi_{0,h}^\pm(t,z_{j,\ell})
    \coloneqq
    \Phi_0^\pm
    \left(
        t,jh,m_\pm(\ell)
    \right),
    \qquad
    (t,j,\ell) \in [0,T] \times \mathbb{Z}/M\mathbb{Z} \times \{ 0,\ldots,M-1 \}.
\end{equation}
Here and in what follows, we adopt the convention $0^0=1$. We note that $\Phi_{0,h}^{\pm}$ decay exponentially: we have
\begin{equation}\label{eq:Phi0-decay}
    |\Phi_{0,h}^\pm(t,z_{j,\ell})|
    \leq
    C_T\Gamma^{m_\pm(\ell)}
\end{equation}
for some constant $C_T>0$ depending only on $T$. Thus these corrections are localized within $O(1)$ lattice layers of the boundary. Concerning the interior and the boundary residuals of the principal Knudsen layer correctors $\Phi_{0,h}^{-}$, we have the following two propositions.

\begin{proposition}
    \label{prop:boundary-Phi0}
    We have
    \begin{equation}\label{eq:boundary-Phi0}
        R_{h}^{\pm}[\Phi_{0,h}^{\pm}]
        =
        -\alpha_{\pm} \kappa_{\pm}
        +
        O(h^2)
    \end{equation}
    uniformly in $[0,T-h^2] \times \T_h$.
\end{proposition}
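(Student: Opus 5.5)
We verify the claim by direct computation from the definitions, treating the lower wall (the upper one is symmetric). Fix $x=jh\in\T_h$ and $t\in[0,T-h^2]$. At $z_x^-$ we have $\ell=0$, so $m_-(0)=0$. The outgoing directions are those $i$ with $z_x^-+c_ih\notin\Omega_h$, namely $i\in\{S,SW,SE\}$. By \eqref{eq:full_to_partial_residual}, for $i\in\{N,NE,NW\}$ we have
\begin{equation}
    R^-_{h,i}[\Phi^-_{0,h}](t,x)=\Phi^-_{0,h,i}(t+h^2,z_x^-)-\bigl(\mathscr C\Phi^-_{0,h}(t,z_x^-)\bigr)_{i^*}.
\end{equation}

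The two terms are evaluated as follows.

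\emph{The collision term.} Since $\Phi^-_{0,h}(t,z)$ is pointwise a scalar multiple of $\tilde\kappa_-$, \eqref{eq:collision-kappa} gives $\mathscr C\Phi^-_{0,h}(t,z_x^-)=\gamma\Phi^-_{0,h}(t,z_x^-)=-\alpha_-(t,x)\gamma\tilde\kappa_-$. Because $\tilde\kappa_-$ is supported on $\{N,NE,NW\}$, its components at $i^*\in\{S,SW,SE\}$ vanish. Hence the collision term contributes exactly zero.

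\emph{The first term.} It equals $-\alpha_-(t+h^2,x)\gamma^0(\tilde\kappa_-)_i=-\alpha_-(t+h^2,x)(\kappa_-)_i$, using the convention $0^0=1$. Taylor expanding in time and using the smoothness of $\alpha_-$ (inherited from $R^{-,(3)}$ through the linear decomposition \eqref{eq:def-g0-alpha-pm}) gives $-\alpha_-(t,x)\kappa_-+O(h^2)$, uniformly in $(t,x)$ because $\partial_t\alpha_-$ is bounded on $[0,T]\times\T$.

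\emph{The upper wall.} Here $m_+(M-1)=0$, $\tilde\kappa_+$ is supported on $\{S,SW,SE\}$, and the outgoing directions are $\{N,NE,NW\}$. The identical computation applies.

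\emph{The cross terms.} The statement as written involves only $R^\pm_h[\Phi^\pm_{0,h}]$ (matching signs), so no cross-wall terms arise. If one also wants $R^\mp_h[\Phi^\pm_{0,h}]$, note the following. At the opposite wall, the collision term is again zero by the support of $\tilde\kappa_\pm$. The streamed component, however, is $\Phi^\pm_{0,h,i}$ with $i$ in the support, which gives $O(\Gamma^{M-1})$, and this is $O(h^2)$ since $\Gamma<1$. When $\Gamma=0$ with $M\geq2$, it is exactly zero.

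The only delicate point is bookkeeping: checking that the index sets line up so that the bounce-back component picks up $\tilde\kappa_\pm$ while the collided outgoing component does not. No real analytic obstacle is expected.
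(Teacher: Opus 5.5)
Your computation is correct and is essentially the paper's own proof. At the wall layer $m_\pm=0$, the collided outgoing component $(\mathscr C\Phi_{0,h}^\pm)_{i^*}=\gamma(\Phi_{0,h}^\pm)_{i^*}$ vanishes because $\tilde\kappa_\pm$ is supported on the incoming directions, which leaves $-\alpha_\pm(t+h^2,x)\kappa_\pm$, and a Taylor expansion in $t$ then gives the claim.

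Your optional cross-wall aside has the roles swapped. There the streamed component vanishes, since its index lies outside the support of $\tilde\kappa_\pm$, and the collided component is the nonzero one, of order $\Gamma^{M}$. Your conclusion that this term is exponentially small, and hence $O(h^2)$, still holds.
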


\begin{proof}
    For $0 \leq t \leq T-h^2$, $i \in \{ 0,\ldots,8 \}$, and $z_{j,\ell} \in \Omega_h$ with $z_{j,\ell}+c_i h \notin \Omega_h$, taking into account~\eqref{eq:kappa-embeddings} and~\eqref{eq:collision-kappa}, we obtain
    \begin{align}
        R_{h,i}[\Phi_{0,h}^{\pm}](t,z_{j,\ell})
        & =
        \Phi_{0,h,i^*}^{\pm}(t+h^2,z_{j,\ell})
        -
        (\mathscr C \Phi_{0,h}^{\pm}(t,z_{j,\ell}))_i \\
        & =
        -\alpha_{\pm}(t+h^2,jh) \tilde{\kappa}_{\pm,i^*}
        +
        \gamma \alpha_{\pm}(t,jh) \tilde{\kappa}_{\pm,i} \\
        & =
        -\alpha_{\pm}(t+h^2,jh) \tilde{\kappa}_{\pm,i^*}.
    \end{align}
    Noting the definition~\eqref{eq:full_to_partial_residual}, we obtain~\eqref{eq:boundary-Phi0} by applying Taylor's theorem.
\end{proof}

\begin{proposition}
\label{prop:interior-Phi0}
For $i \in \{ 0,\ldots,8 \}$ and $z_{j,\ell} \in \Omega_h$ with $z_{j,\ell}+c_i h \in \Omega_h$, we have
\begin{align}\label{eq:interior-Phi0-exact}
    \begin{aligned}
        r_{h,i}[\Phi_{0,h}^{\pm}](t,z_{j,\ell})
        &=
        -
        \left\{
            \alpha_\pm
            (
            t+h^2,jh+c_{ix}h
            )
            -
            \alpha_\pm(t,jh)
        \right\}
        \gamma^{m_\pm(\ell)+1} \tilde\kappa_{\pm,i}
        \\
        &=
        -h
        \partial_x \alpha_\pm(t,jh)
        \gamma^{m_\pm(\ell)+1}
        \left(
            V_x\tilde\kappa_\pm
        \right)_i
        +
        O\left(
            h^2\Gamma^{m_\pm(\ell)+1}
        \right)
    \end{aligned}
\end{align}
uniformly for $0\leq t\leq T-h^2$. Here
\begin{equation}\label{eq:def-Vx}
    V_x
    \coloneqq
    \operatorname{diag}
    \left(
        c_{0x},\ldots,c_{8x}
    \right).
\end{equation}
\end{proposition}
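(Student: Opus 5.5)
The plan is to compute the interior residual directly from the definition~\eqref{def:interior-residual}, using the explicit product form of $\Phi_{0,h}^{\pm}$, and then Taylor expand. Fix $i$ and $z_{j,\ell}$ with $z_{j,\ell}+c_ih\in\Omega_h$. Write $z_{j,\ell}+c_ih=z_{j+c_{ix},\,\ell+c_{iy}}$. By definition,
\begin{equation}
    r_{h,i}[\Phi_{0,h}^{\pm}](t,z_{j,\ell})
    =
    \Phi_{0,h,i}^{\pm}(t+h^2,z_{j+c_{ix},\ell+c_{iy}})
    -
    (\mathscr C\Phi_{0,h}^{\pm}(t,z_{j,\ell}))_i .
\end{equation}
The collision term is handled by~\eqref{eq:collision-kappa}: since $\Phi_{0,h}^{\pm}(t,z_{j,\ell})$ is a scalar multiple of $\tilde\kappa_\pm$, we get
\begin{equation}
    (\mathscr C\Phi_{0,h}^{\pm}(t,z_{j,\ell}))_i=-\alpha_\pm(t,jh)\gamma^{m_\pm(\ell)+1}\tilde\kappa_{\pm,i}.
\end{equation}
For the streamed term, the key observation is that $\tilde\kappa_{\pm,i}\neq0$ only for $i\in\{N,NE,NW\}$ (resp.\ $\{S,SW,SE\}$). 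For each of these indices, $c_{iy}=+1$ (resp.\ $-1$), so $m_\pm(\ell+c_{iy})=m_\pm(\ell)+1$. For all other $i$, both terms vanish trivially, and the formula holds since $\tilde\kappa_{\pm,i}=0$. Hence the streamed term equals $-\alpha_\pm(t+h^2,jh+c_{ix}h)\gamma^{m_\pm(\ell)+1}\tilde\kappa_{\pm,i}$, and subtracting yields the first (exact) line of~\eqref{eq:interior-Phi0-exact}.

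For the second line, I will apply Taylor's theorem to $\alpha_\pm$, which is smooth on $[0,T]\times\T$. This regularity is inherited from $R^{\pm,(3)}$ through the linear decomposition~\eqref{eq:def-g0-alpha-pm}, so its derivatives are bounded by a constant $C_T$. The expansion reads
\begin{equation}
    \alpha_\pm(t+h^2,jh+c_{ix}h)-\alpha_\pm(t,jh)=hc_{ix}\partial_x\alpha_\pm(t,jh)+O(h^2).
\end{equation}
Multiplying by $\gamma^{m_\pm(\ell)+1}\tilde\kappa_{\pm,i}$, bounding $|\gamma|^{m+1}=\Gamma^{m+1}$, and recognizing $c_{ix}\tilde\kappa_{\pm,i}=(V_x\tilde\kappa_\pm)_i$ gives the claimed expansion, with an error uniform in $t$, $j$, and $\ell$.

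The only delicate point is the index bookkeeping in the first step. One must check that streaming into the interior always increases the wall distance by exactly one on the support of $\tilde\kappa_\pm$. In particular, the $\pm$ profiles are each measured from their own wall, so I will verify $m_+(\ell-1)=m_+(\ell)+1$ for the southward directions. The convention $0^0=1$ causes no issue because the exponents are $m+1\geq1$.
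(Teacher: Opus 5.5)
Your proposal is correct and follows essentially the same route as the paper. Like the paper, you compute the collision term via $\mathscr C\tilde\kappa_\pm=\gamma\tilde\kappa_\pm$, use that on the support of $\tilde\kappa_\pm$ streaming raises the wall distance by exactly one, and finish with Taylor's theorem. The only cosmetic difference is that you treat both walls at once, checking $m_+(\ell-1)=m_+(\ell)+1$ explicitly, whereas the paper does the lower wall and calls the upper wall similar.
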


\begin{proof}
We prove the assertion for $\Phi_{0,h}^{-}$; the calculations for $\Phi_{0,h}^{+}$ are similar. Let $z_{j,\ell} \in \Omega_h$ satisfy $z_{j,\ell}+c_i h\in\Omega_h$. By~\eqref{eq:collision-kappa},~\eqref{eq:def-Phi0-pm}, and~\eqref{eq:def-Phi0h-pm}, we obtain
\[
    \left(
        \mathscr C\Phi_{0,h}^{-}(t,z_{j,\ell})
    \right)_i
    =
    -\alpha_-(t,jh)
    \gamma^{m_-(\ell)+1}
    \tilde\kappa_{-,i}.
\]
Since~\eqref{eq:interior-Phi0-exact} is trivial for $i \in \{ 0,\ldots,8 \}$ with $\tilde\kappa_{-,i} = 0$, we assume $\tilde\kappa_{-,i} \neq 0$. In this case, we have $c_{iy}=1$ by~\eqref{eq:kappa-embeddings}, which implies
\[
    m_-(\ell+c_{iy})
    =
    m_-(\ell)+1.
\]
Therefore,
\[
    \Phi_{0,h,i}^{-}
    \left(
        t+h^2,
        z_{j,\ell}+c_i h
    \right)
    =
    -\alpha_-
    \left(
        t+h^2,
        jh+c_{ix} h
    \right)
    \gamma^{m_-(\ell)+1}
    \tilde\kappa_{-,i}.
\]
Thus, by definition, we have
\[
\begin{aligned}
    r_{h,i}[\Phi_{0,h}^{-}](t,z_{j,\ell})
    &=
    -
    \left\{
        \alpha_-
        \left(
            t+h^2,
            jh+c_{ix} h
        \right)
        -
        \alpha_-(t,jh)
    \right\}
    \gamma^{m_-(\ell)+1} \tilde\kappa_{-,i},
\end{aligned}
\]
which proves the first equality in~\eqref{eq:interior-Phi0-exact}. The second is an immediate consequence of Taylor's theorem.
\end{proof}

Using the principal Knudsen layer correctors $\Phi_{0,h}^{\pm}$ defined above, we refine the prediction function $\hat f_{h}^{[1]}$ further to
\begin{align}\label{eq:def-f11}
    \begin{aligned}
        \hat f_h^{[1,1]}
        & \coloneqq
        \hat f_h^{[1]}
        +
        h^3
        \left(
            \Phi_{0,h}^-+\Phi_{0,h}^+
        \right)
        \\
        & =
        \mathcal P_h[u+h^2 v_0,p+h^2 \pi_0]
        +
        h^3
        \left(
            \Phi_{0,h}^-+\Phi_{0,h}^+
        \right).
    \end{aligned}
\end{align}
Concerning the interior and the boundary residuals of $\hat{f}_{h}^{[1,1]}$, we have the following proposition.

\begin{proposition}\label{prop:asymptotics-[1,1]}
    Let
    \begin{equation}
        \hat{r}_{h}^{[1,1]} \coloneqq r_h [ \hat{f}_h^{[1,1]} ],
        \qquad
        \widehat{R}_{h}^{\pm,[1,1]}
        \coloneqq
        R_{h}^{\pm}[\hat{f}_h^{[1,1]}].
    \end{equation}
    For the interior residual $\hat{r}_h^{[1,1]}$, we have
    \begin{align}\label{eq:interior-[1,1]}
        \begin{aligned}
            \hat{r}_{h}^{[1,1]}(t,z_{j,\ell})
            & =
            \sum_{\sigma = +,-}
            \left\{
            -h^4 \partial_x \alpha_{\sigma}(t,jh)
            \gamma^{m_\sigma(\ell)+1}
                V_x\tilde\kappa_{\sigma}
            +
            O\left( h^5 \Gamma^{m_{\sigma}(\ell)+1} \right)
            \right\}
            \\
            & \qquad
            +
            O(h^6)
        \end{aligned}
    \end{align}
    uniformly in $(t,z_{j,\ell}) \in [0,T-h^2] \times \Omega_h$. Moreover, for the boundary residuals $\widehat R_{h}^{\pm,[1,1]}$, we have
    \begin{equation}\label{eq:boundary-[1,1]}
        \widehat R_{h}^{\pm,[1,1]}
        =
        O(h^4)
    \end{equation}
    uniformly in $[0,T-h^2] \times \T_h$.
\end{proposition}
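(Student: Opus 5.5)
The plan is to use linearity of the residual operators. By definition~\eqref{def:E}, $\mathcal E_h$ is linear in $F$. Since the splitting~\eqref{def:interior-residual}--\eqref{def:boundary-residual} depends only on $(z,i)$, the operators $r_h$, $R_h$ and $R_h^\pm$ are also linear. From~\eqref{eq:def-f11} we therefore get
\begin{equation}
    \hat r_h^{[1,1]} = \hat r_h^{[1]} + h^3\left(r_h[\Phi_{0,h}^-]+r_h[\Phi_{0,h}^+]\right),
    \qquad
    \widehat R_h^{\pm,[1,1]} = \widehat R_h^{\pm,[1]} + h^3\left(R_h^\pm[\Phi_{0,h}^-]+R_h^\pm[\Phi_{0,h}^+]\right).
\end{equation}
The interior statement~\eqref{eq:interior-[1,1]} then follows directly. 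The term $\hat r_h^{[1]}$ is $O(h^6)$ by~\eqref{eq:interior-after-first-Stokes-kappa}. Multiplying the expansion~\eqref{eq:interior-Phi0-exact} of Proposition~\ref{prop:interior-Phi0} by $h^3$ gives, for each $\sigma=\pm$, exactly the term $-h^4\partial_x\alpha_\sigma\gamma^{m_\sigma(\ell)+1}V_x\tilde\kappa_\sigma$ plus $O(h^5\Gamma^{m_\sigma(\ell)+1})$. This applies to those $i$ with $z_{j,\ell}+c_ih\in\Omega_h$; for the other $i$ the interior residual vanishes by definition. The leading term also vanishes automatically there: $V_x\tilde\kappa_\sigma$ is supported on the $NE,NW$ (resp. $SW,SE$) components, and the bound still holds componentwise. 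The $O(\cdot)$ constants are uniform because $\alpha_\pm$ is smooth on $[0,T]\times\T$.

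For the boundary statement, consider the lower wall. Proposition~\ref{prop:asymptotics-[1]} gives $\widehat R_h^{-,[1]} = h^3\alpha_-\kappa_- + O(h^4)$. Proposition~\ref{prop:boundary-Phi0} gives $h^3R_h^-[\Phi_{0,h}^-] = -h^3\alpha_-\kappa_- + O(h^5)$. These leading terms cancel.

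The remaining point, which I expect to be the only real subtlety, is the cross term $h^3R_h^-[\Phi_{0,h}^+]$: the effect of the upper-wall Knudsen profile on the lower wall residual. At $z^-_x=z_{j,0}$ we have $m_+(0)=M-1$. The computation in the proof of Proposition~\ref{prop:boundary-Phi0} holds verbatim for any site and gives
\begin{equation}
    R_{h,i}[\Phi_{0,h}^+](t,z_{j,0}) = \Phi^+_{0,h,i^*}(t+h^2,z_{j,0}) - \gamma\,\Phi^+_{0,h,i}(t,z_{j,0}).
\end{equation}
Each of these terms is bounded by $C_T\Gamma^{M-1}$ by~\eqref{eq:Phi0-decay}. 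Since $\Gamma<1$ and $M=h^{-1}$, this is $O(\Gamma^{1/h-1})=O(h^k)$ for every $k$, in particular $O(h)$. Hence the cross term is $O(h^4)$. In the case $\Gamma=0$ the convention $0^0=1$ is used, and $M-1\geq 1$ then makes the term vanish. The upper wall is treated symmetrically, and this yields~\eqref{eq:boundary-[1,1]}.

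A check on the indices: in the boundary computation the index $i$ with $z+c_ih\notin\Omega_h$ at the lower wall lies in $\{S,SW,SE\}$, and $\tilde\kappa_{-,i}=0$ there. So only the $i^*$ term survives, which is consistent with Proposition~\ref{prop:boundary-Phi0}.
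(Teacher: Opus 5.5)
Your proposal is correct and follows the paper's proof. Both use linearity of $r_h$ and $R_h^\pm$ applied to~\eqref{eq:def-f11}, then Propositions~\ref{prop:asymptotics-[1]}--\ref{prop:interior-Phi0}, and dispose of the cross-wall boundary contribution of $h^3\Phi_{0,h}^{\mp}$ as exponentially small.

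One small slip in your interior argument: at a pair $(z_{j,\ell},i)$ with $z_{j,\ell}+c_ih\notin\Omega_h$, only the same-wall leading term vanishes automatically. The opposite-wall term does not vanish; for example, with $\sigma=-$, $\ell=M-1$ and $i\in\{NE,NW\}$ it equals $-h^4\partial_x\alpha_-\gamma^{M}(V_x\tilde\kappa_-)_i\neq0$. It is still $O(h^4\Gamma^{M})=O(h^6)$, by the same exponential-smallness argument you already use for the boundary cross term, so the conclusion is unaffected.
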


\begin{proof}
    The assertions are immediately obtained by applying Propositions~\ref{prop:asymptotics-[1]}--\ref{prop:interior-Phi0} to~\eqref{eq:def-f11}. We remark that the contribution of $h^3 \Phi_{0,h}^{\mp}$ to $\widehat{R}_{h}^{\pm,[1,1]}$ is $O(h^3 \Gamma^{M-1})$ and is exponentially small (note that $M=h^{-1}$).
\end{proof}

\subsubsection{Tangential Knudsen layer correctors}\label{sec:phi1tan}

We next define the tangential Knudsen layer profiles. Their role is to eliminate the $O(h^4)$ terms $-h^4 \partial_x \alpha_\pm(t,jh) \gamma^{m_\pm(\ell)+1} V_x\tilde\kappa_\pm$ in the interior residual estimate~\eqref{eq:interior-[1,1]}.

For a function
\[
\Phi=\Phi(t,x,m) \colon [0,T] \times \T \times \mathbb{N}_0 \to \R^9,
\]
set
\begin{equation}
    (\mathscr T_h^{\pm}\Phi)_i(t,x,m)
    \coloneqq
    \Phi_i
    \left(
        t,x+c_{ix}h,m \mp c_{iy}
    \right)
\end{equation}
whenever the right-hand side is defined, i.e., when $m \mp c_{iy} \geq 0$. Let
\begin{equation}
    \mathscr L_h^{\pm}
    \coloneqq
    \mathscr T_h^{\pm} - \mathscr C.
\end{equation}
Formally applying Taylor's theorem in the tangential variable $x$ to the operator $\mathscr L_{h}^{\pm}$, we obtain
\begin{equation}\label{eq:Lh-expansion-minus}
    \mathscr L_h^{\pm}
    =
    \mathscr L_{(0)}^{\pm}
    +
    h\mathscr L_{(1)}^{\pm}
    +
    O(h^2),
\end{equation}
where
\begin{align}
    (\mathscr L_{(0)}^{\pm}\Phi)_i(t,x,m)
    &=
    \Phi_i(t,x,m \mp c_{iy})
    -
    (\mathscr C\Phi(t,x,m))_i,
    \label{eq:L0-minus}
    \\
    (\mathscr L_{(1)}^{\pm}\Phi)_i(t,x,m)
    &=
    c_{ix}\partial_x
    \Phi_i(t,x,m \mp c_{iy}).
    \label{eq:L1-minus}
\end{align}
In this way, we obtain two operators $\mathscr{L}_{{(0)}}^{\pm}$ and $\mathscr{L}_{{(1)}}^{\pm}$. We note that the expansion~\eqref{eq:Lh-expansion-minus} is only formal and will not be used in the actual proof.

Now, by~\eqref{eq:collision-kappa} and the fact that $c_{iy} = - 1$ if $\tilde{\kappa}_{+,i} \neq 0$ and $c_{iy} = 1$ if $\tilde{\kappa}_{-,i} \neq 0$, we observe that $\Phi_{0}^{\pm}$ defined by~\eqref{eq:def-Phi0-pm} solves
\begin{equation}
    \mathscr L_{(0)}^{\pm}\Phi_0^{\pm}=0.
\end{equation}
Furthermore, we have
\begin{equation}\label{eq:L1-Phi0-minus}
    \mathscr L_{(1)}^{\pm}\Phi_0^{\pm}
    =
    -
    \partial_x\alpha_{\pm}(t,x)
    \gamma^{m+1}
    V_x \tilde \kappa_{\pm}.
\end{equation}
This is actually how $-h^4 \partial_x \alpha_\pm(t,jh) \gamma^{m_\pm(\ell)+1} V_x\tilde\kappa_\pm$ in~\eqref{eq:interior-[1,1]} appeared. In order to cancel this term, we construct a profile
\begin{equation}
    \Phi_{1,\mathrm{tan}}^{\pm} = \Phi_{1,\mathrm{tan}}^{\pm}(t,x,m) \colon [0,T] \times \T \times \mathbb{N}_0 \to \R^9
\end{equation}
such that
\begin{equation}\label{eq:tangential-cancellation-minus}
    \mathscr L_{(0)}^{\pm}
    \Phi_{1,\mathrm{tan}}^{\pm}
    +
    \mathscr L_{(1)}^{\pm}
    \Phi_0^{\pm}
    =
    0.
\end{equation}
To this end, we first need some linear algebraic preparations.

\begin{remark}
    The tangential Knudsen layer equation~\eqref{eq:tangential-cancellation-minus} is analogous to the equation for the higher-order Knudsen layers in the kinetic theory of gases. In the asymptotic theory of the Boltzmann equation, the leading Knudsen layer solves a homogeneous kinetic half-space equation with the tangential variables frozen, whereas the equations for the higher-order Knudsen layers contain tangential derivatives of the lower-order Knudsen layers as inhomogeneous source terms; see for example~\cite[Chapter~3]{Sone2002}.
\end{remark}

For $\gamma\neq0$, define
\begin{equation}\label{eq:def-D-gamma}
    D_\gamma
    \coloneqq
    \operatorname{diag}
    \left(
        \gamma^{c_{0y}},
        \ldots,
        \gamma^{c_{8y}}
    \right).
\end{equation}
For any $\chi\in\R^9$, a simple calculation shows
\begin{equation}\label{eq:L0-gamma-chi}
    \mathscr L_{(0)}^-
    \left(
        \gamma^m\chi
    \right)
    =
    \gamma^m
    (D_\gamma-\mathscr C)\chi.
\end{equation}
Here, $\gamma^m \chi$ is considered as a function of $(t,x,m)$ when applying the operator $\mathscr{L}_{(0)}^{-}$. The next lemma gives some properties of the matrix $D_{\gamma}-\mathscr C$.

\begin{lemma}\label{lem:Dgamma-kernel}
    Suppose that $\gamma\in(-1,1)\setminus\{0\}$. Then
    \begin{equation}
        \ker(D_\gamma-\mathscr C)
        =
        \operatorname{span}\{ \tilde \kappa_- \},
        \qquad
        \operatorname{rank}(D_\gamma-\mathscr C)=8.
        \label{eq:Dgamma-kernel}
    \end{equation}
    The left kernel of $D_{\gamma}-\mathscr C$ is spanned by
    \begin{equation}\label{eq:lambda_minus}
        \lambda_-
        \coloneqq
        e_N-2e_{NE}-2e_{NW},
    \end{equation}
    and the space of $\chi \in \R^9$ solving the linear equation
    \begin{equation}\label{eq:chi-equation}
        (D_\gamma-\mathscr C)\chi
        =
        -\gamma V_x \tilde \kappa_-
    \end{equation}
    is a one-dimensional affine space.
\end{lemma}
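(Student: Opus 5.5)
The plan is to exploit the structure $\mathscr C=\Pi+\gamma(I-\Pi)$, where $\Pi$ depends on $\chi$ only through $(\rho_\chi,j_\chi)$. This reduces the kernel computation to a small linear system in the moments. Then I compute the left kernel directly and obtain solvability of~\eqref{eq:chi-equation} from the Fredholm alternative.

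\emph{Kernel.} The equation $(D_\gamma-\mathscr C)\chi=0$ reads $D_\gamma\chi-\gamma\chi=(1-\gamma)F^{\mathrm{eq}}(\rho_\chi,j_\chi)$. I split the indices by $c_{iy}$.
For $c_{iy}=0$ (indices $0,E,W$), the diagonal entry of $D_\gamma$ is $1$, so $\chi_i=F_i^{\mathrm{eq}}(\rho_\chi,j_\chi)$.
For $c_{iy}=1$ (indices $N,NE,NW$), the left-hand side vanishes, which forces $F_i^{\mathrm{eq}}(\rho_\chi,j_\chi)=0$. These three equations give $j_{\chi,x}=0$ and $\rho_\chi=-3j_{\chi,y}$.
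For $c_{iy}=-1$ (indices $S,SW,SE$), we get $\chi_i=\frac{\gamma}{1+\gamma}F_i^{\mathrm{eq}}(\rho_\chi,j_\chi)$.
Substituting these into the definitions $\rho_\chi=\sum_i\chi_i$ and $j_\chi=\sum_i c_i\chi_i$ gives a small linear system. Its unknowns are $j_{\chi,y}$ and the free combinations of the upper components $\chi_N,\chi_{NE},\chi_{NW}$. I expect its determinant to be a nonzero rational function of $\gamma$ on $(-1,1)\setminus\{0\}$. This step is the only genuine computation, and it is where I expect the main obstacle.

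If the determinant is nonzero, then $\rho_\chi=0$ and $j_\chi=0$. Hence $F^{\mathrm{eq}}=0$, so the middle and lower components vanish. The upper components must then satisfy $\chi_N+\chi_{NE}+\chi_{NW}=0$ and $\chi_{NE}-\chi_{NW}=0$, which gives $\chi\in\operatorname{span}\{\tilde\kappa_-\}$. Conversely, $\tilde\kappa_-$ lies in the kernel by~\eqref{eq:L0-gamma-chi} and~\eqref{eq:collision-kappa}, since $D_\gamma\tilde\kappa_-=\gamma\tilde\kappa_-$. Rank--nullity then gives rank $8$.

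\emph{Left kernel.} For an upper index $i$, the $i$-th row of $D_\gamma-\mathscr C$ equals $\gamma e_i^T-\gamma e_i^T-(1-\gamma)\Pi_i=-(1-\gamma)\Pi_i$. Here $\Pi_i$ denotes the linear functional $\chi\mapsto w_i(\rho_\chi+3c_i\cdot j_\chi)$. Therefore
\[
\lambda_-^T(D_\gamma-\mathscr C)\chi=-(1-\gamma)\Bigl[\tfrac19(\rho+3j_y)-\tfrac2{36}(\rho+3j_x+3j_y)-\tfrac2{36}(\rho-3j_x+3j_y)\Bigr]=0,
\]
where $\rho=\rho_\chi$ and $j=j_\chi$. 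Since the rank is $8$, the left kernel is one-dimensional and hence equals $\operatorname{span}\{\lambda_-\}$.

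\emph{Solvability of~\eqref{eq:chi-equation}.} Note that $V_x\tilde\kappa_-=-e_{NE}+e_{NW}$. This gives $\lambda_-\cdot(-\gamma V_x\tilde\kappa_-)=-\gamma\bigl((-2)(-1)+(-2)(1)\bigr)=0$. So the right-hand side lies in $(\ker(D_\gamma-\mathscr C)^T)^\perp=\operatorname{Ran}(D_\gamma-\mathscr C)$, and a solution exists. The solution set is a particular solution plus $\operatorname{span}\{\tilde\kappa_-\}$, which is a one-dimensional affine space.
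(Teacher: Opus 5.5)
Your outline follows the paper's proof step for step, and each of these pieces is correct:
\begin{itemize}
\item splitting $(D_\gamma-\mathscr C)\chi=0$ by $c_{iy}\in\{1,0,-1\}$;
\item reading off $j_x=0$ and $\rho=-3j_y$ from the rows $N,NE,NW$;
\item checking the left kernel via the fact that those rows equal $-(1-\gamma)\Pi_i$ (the paper phrases this as $\lambda_-^TD_\gamma=\gamma\lambda_-^T$ and $\lambda_-^T\Pi=0$);
\item concluding with the Fredholm alternative using $\lambda_-\cdot V_x\tilde\kappa_-=0$.
\end{itemize}
The gap is that you stop at the one step that carries the content of the lemma: you only \emph{expect} the moment-consistency system to be nondegenerate. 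The rank-$8$ statement, the one-dimensionality of the left kernel, and the one-dimensionality of the solution set of the $\chi$-equation all rest on $\ker(D_\gamma-\mathscr C)=\operatorname{span}\{\tilde\kappa_-\}$. So the proposal as written does not yet prove the lemma.

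The missing computation is two lines, and it is exactly what the paper does. With $j_x=0$ and $j_y=-\rho/3$, the rows $S,SW,SE$ give $\chi_i=\frac{\gamma}{1+\gamma}\,2w_i\rho$. Hence $B\coloneqq\chi_S+\chi_{SW}+\chi_{SE}=\frac{\gamma}{3(1+\gamma)}\rho$.

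On the other hand, $\rho-j_y=\sum_i(1-c_{iy})\chi_i=(\chi_0+\chi_E+\chi_W)+2B=\frac23\rho+2B$. Since $\rho-j_y=\frac43\rho$, this gives $B=\frac13\rho$.

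Equating the two expressions for $B$ gives $-\frac{\rho}{3(1+\gamma)}=0$, hence $\rho=0$ and then $j=0$. The upper components are then fixed by $\rho=0$ and $j_x=0$, exactly as you say.

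The coefficient $-\frac{1}{3(1+\gamma)}$ is nonzero for every admissible $\gamma$, so there are no exceptional values. The hypotheses $\gamma\neq0$ and $\gamma\neq\pm1$ enter only through the definition of $D_\gamma$ and the divisions by $1-\gamma$ and $\gamma^{-1}-\gamma$. With this inserted, your argument is complete and coincides with the paper's.
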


\begin{proof}
    Let $f\in\ker(D_\gamma-\mathscr C)$ and write
    \[
        \rho \coloneqq \rho_f,
        \qquad
        j=(j_x,j_y) \coloneqq j_f.
    \]
    Then the condition that $f \in \ker(D_{\gamma}-\mathscr C)$ can be written as
    \begin{equation}\label{eq:kernel-componentwise}
        \left(
            \gamma^{c_{iy}}-\gamma
        \right)f_i
        =
        (1-\gamma)
        w_i
        \left(
            \rho+3c_i\cdot j
        \right), \qquad i \in \{ 0,\ldots,8 \}.
    \end{equation}
    For $i=N,NE,NW$, since $c_{iy}=1$, equations~\eqref{eq:kernel-componentwise} can be written as
    \[
        \rho+3j_y=0,
        \qquad
        \rho+3(j_x+j_y)=0,
        \qquad
        \rho+3(-j_x+j_y)=0,
    \]
    which gives
    \begin{equation}
        j_x=0,
        \qquad
        j_y=-\frac{\rho}{3}.
        \label{eq:kernel-moments-first}
    \end{equation}
    For $i=0,E,W$, by $c_{iy}=0$ and~\eqref{eq:kernel-moments-first}, equations~\eqref{eq:kernel-componentwise} can be written as
    \begin{equation}\label{eq:f0EW}
        f_0=w_0 \rho=\frac49\rho,
        \qquad
        f_E=w_E \rho=\frac19\rho, \qquad f_W=w_W \rho=\frac19\rho.
    \end{equation}
    For $i=S,SW,SE$, noting that $c_{iy}=-1$, equations~\eqref{eq:kernel-componentwise} can then be written as
    \begin{equation}
        f_i
        =
        \frac{2\gamma}{1+\gamma}
        w_i\rho.
    \end{equation}
    Thus, writing
    \[
        B
        \coloneqq
        f_S+f_{SW}+f_{SE},
    \]
    we have
    \begin{equation}
        B
        =
        \frac{\gamma}{3(1+\gamma)}\rho.
        \label{eq:B-down}
    \end{equation}
    On the other hand, by~\eqref{eq:kernel-moments-first} and~\eqref{eq:f0EW}, we have
    \begin{align}
        \begin{aligned}
            2B
            & =
            \left( f_0+f_E+f_W+f_N+f_{NE}+f_{NW}+f_S+f_{SW}+f_{SE} \right) \\
            & \qquad -\left( f_N+f_{NE}+f_{NW}-f_S-f_{SW}-f_{SE} \right) \\
            & \qquad -\left( f_0+f_E+f_W \right) \\
            & =\rho-j_y-\frac{2}{3}\rho=\frac{2}{3}\rho.
        \end{aligned}
    \end{align}
    Combining this with~\eqref{eq:B-down}, we obtain $\rho=0$; therefore $j=0$ as well by~\eqref{eq:kernel-moments-first}. Equations~\eqref{eq:kernel-componentwise} then imply that all $f_i=0$ except for $i=N,NE,NW$. Hence $\rho=0$ and $j=0$ yield
    \begin{equation}
        f_N+f_{NE}+f_{NW}=0, \qquad f_{NE}-f_{NW}=0.
    \end{equation}
    These relations give
    \begin{equation}
        f
        \in
        \operatorname{span}\{ \tilde \kappa_- \},
    \end{equation}
    which shows
    \begin{equation}
        \ker(D_{\gamma}-\mathscr C) \subset \operatorname{span}\{ \tilde \kappa_- \}.
    \end{equation}
    The other inclusion can be verified by a simple calculation and it proves the first assertion of~\eqref{eq:Dgamma-kernel}; the rank statement follows from this.

    Next, by~\eqref{eq:lambda_minus}, a direct calculation shows
    \begin{equation}\label{eq:lambda_D}
        \lambda_-^{T}
        D_\gamma
        =
        \gamma \lambda_{-}^{T}.
    \end{equation}
    On the other hand, for any $f \in \R^9$, we have
    \begin{align}\label{eq:lambda_Pi}
        \begin{aligned}
            \lambda_{-}^{T}\Pi f
            & =
            \lambda_{-}^{T}F^{\mathrm{eq}}(\rho_f,j_f) \\
            & =
            w_N\left( \rho_f+3(j_f)_y \right) \\
            & \qquad
            -2w_{NE}\left( \rho_f+3(j_f)_x+3(j_f)_y \right)-2w_{NW}\left( \rho_f-3(j_f)_x+3(j_f)_y \right) \\
            & =0.
        \end{aligned}
    \end{align}
    Hence
    \begin{equation}
        \lambda_{-}^{T}\Pi=0.
    \end{equation}
    By~\eqref{eq:lambda_D} and~\eqref{eq:lambda_Pi}, we obtain
    \begin{equation}
        \lambda_{-}^{T}(D_{\gamma}-\mathscr C)
        =
        \lambda_{-}^{T}D_{\gamma}-\lambda_{-}^{T}\left( \gamma I+(1-\gamma) \Pi \right)=0.
    \end{equation}
    Since the left kernel of $(D_{\gamma}-\mathscr C)$ is one-dimensional by the second equality in~\eqref{eq:Dgamma-kernel}, it is spanned by $\lambda_-$. Finally, a direct calculation shows
    \[
        \lambda_-\cdot V_x \tilde \kappa_-
        =
        0.
    \]
    Therefore, by the Fredholm alternative,~\eqref{eq:chi-equation} has a solution $\chi \in \R^9$, and the solution space is a one-dimensional affine space by the first equality in~\eqref{eq:Dgamma-kernel}.
\end{proof}

Let $\chi_{\gamma}^- \in \R^9$ be the unique solution to~\eqref{eq:chi-equation} with the normalizing condition
\begin{equation}\label{eq:chi-normalization}
    (\chi_\gamma^-)_N=0.
\end{equation}
The uniqueness follows from the first equality in~\eqref{eq:Dgamma-kernel} and $\tilde{\kappa}_{-,N}=2$. The next lemma gives some properties of $\chi_{\gamma}^-$.

\begin{lemma}\label{prop:chi-properties}
    Suppose that $\gamma \in (-1,1) \setminus \{ 0 \}$. The vector $\chi_\gamma^-$ satisfies
    \begin{gather}\label{eq:chi-symmetry}
        \begin{gathered}
        (\chi_\gamma^-)_0
        =
        (\chi_\gamma^-)_N
        =
        (\chi_\gamma^-)_S
        =
        0,
        \\
        (\chi_\gamma^-)_E
        =
        -(\chi_\gamma^-)_W,
        \\
        (\chi_\gamma^-)_{NE}
        =
        -(\chi_\gamma^-)_{NW},
        \\
        (\chi_\gamma^-)_{SW}
        =
        -(\chi_\gamma^-)_{SE}.
        \end{gathered}
    \end{gather}
    In particular,
    \begin{equation}\label{eq:chi-density-normal-momentum}
        \sum_{i=0}^8(\chi_\gamma^-)_i=0,
        \qquad
        \sum_{i=0}^8 c_{iy}(\chi_\gamma^-)_i=0.
    \end{equation}
    Furthermore, it holds that
    \begin{equation}\label{eq:chi-tangential-momentum}
        \sum_{i=0}^8 c_{ix}(\chi_\gamma^-)_i
        =
        -\frac{12\gamma}{1-\gamma}.
    \end{equation}
\end{lemma}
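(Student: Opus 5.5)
The plan is to get the symmetry relations~\eqref{eq:chi-symmetry} from a reflection argument combined with the uniqueness of $\chi_\gamma^-$, and then to read off~\eqref{eq:chi-tangential-momentum} from a single component of~\eqref{eq:chi-equation}.

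\emph{Reflection.} Let $\sigma$ be the $9\times9$ permutation matrix of the reflection $x\mapsto -x$. It swaps $E\leftrightarrow W$, $NE\leftrightarrow NW$ and $SE\leftrightarrow SW$, and fixes $0,N,S$. First I check three facts.
\begin{itemize}
\item $\sigma$ commutes with $D_\gamma$, because the reflection preserves $c_{iy}$.
\item $\sigma$ commutes with $\Pi$, hence with $\mathscr C=\Pi+\gamma(I-\Pi)$. The reason is that the weights are reflection invariant and $\rho_{\sigma f}=\rho_f$, $j_{\sigma f}=(-(j_f)_x,(j_f)_y)$, so $\Pi\sigma f=\sigma\Pi f$.
\item $\sigma V_x=-V_x\sigma$, and $\sigma\tilde\kappa_-=\tilde\kappa_-$ by~\eqref{eq:kappa-embeddings}.
\end{itemize}
Set $\chi'\coloneqq-\sigma\chi_\gamma^-$. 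Then
\[
    (D_\gamma-\mathscr C)\chi'
    =-\sigma(D_\gamma-\mathscr C)\chi_\gamma^-
    =\gamma\sigma V_x\tilde\kappa_-
    =-\gamma V_x\sigma\tilde\kappa_-
    =-\gamma V_x\tilde\kappa_-,
\]
so $\chi'$ solves~\eqref{eq:chi-equation}. Also $(\chi')_N=-(\chi_\gamma^-)_N=0$, so $\chi'$ satisfies the normalization~\eqref{eq:chi-normalization}. By the uniqueness established after Lemma~\ref{lem:Dgamma-kernel}, $\chi_\gamma^-=-\sigma\chi_\gamma^-$.

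\emph{Consequences of the symmetry.} On the indices fixed by $\sigma$, the identity $\chi_\gamma^-=-\sigma\chi_\gamma^-$ gives $(\chi_\gamma^-)_0=(\chi_\gamma^-)_N=(\chi_\gamma^-)_S=0$. On the swapped pairs it gives the three antisymmetry relations in~\eqref{eq:chi-symmetry}. Summing over all indices, and summing with weights $c_{iy}$, the components cancel in pairs because each swapped pair shares the same $c_{iy}$. This yields~\eqref{eq:chi-density-normal-momentum}. In particular $\rho_{\chi}=0$ and $(j_\chi)_y=0$, where $\chi=\chi_\gamma^-$.

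\emph{Tangential momentum.} Write $j_x\coloneqq\sum_i c_{ix}(\chi_\gamma^-)_i$. Since $\rho_\chi=0$ and $(j_\chi)_y=0$, component $i$ of~\eqref{eq:chi-equation} reads, as in~\eqref{eq:kernel-componentwise},
\[
    (\gamma^{c_{iy}}-\gamma)(\chi_\gamma^-)_i-(1-\gamma)\,3w_ic_{ix}j_x=-\gamma c_{ix}\tilde\kappa_{-,i}.
\]
I take $i=NE$, where $c_{iy}=1$, $c_{ix}=1$, $w_{NE}=1/36$ and $\tilde\kappa_{-,NE}=-1$. The coefficient of $(\chi_\gamma^-)_{NE}$ vanishes, and the equation reduces to $-(1-\gamma)j_x/12=\gamma$. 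Hence $j_x=-12\gamma/(1-\gamma)$, which is~\eqref{eq:chi-tangential-momentum}.

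No step is a real obstacle. The only points needing care are verifying that $\Pi$ commutes with $\sigma$ and getting the signs right in the reflection identity; existence of $\chi_\gamma^-$ is already supplied by Lemma~\ref{lem:Dgamma-kernel}.
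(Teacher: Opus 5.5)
Your proposal is correct and follows essentially the same route as the paper: the reflection $x\mapsto -x$, together with uniqueness under the normalization $(\chi_\gamma^-)_N=0$, gives the symmetry relations, and the $NE$-component of the defining equation, where $\gamma^{c_{iy}}-\gamma=0$, yields $\sum_i c_{ix}(\chi_\gamma^-)_i=-12\gamma/(1-\gamma)$. You also verify explicitly that the reflection commutes with $\Pi$ and $D_\gamma$ and anticommutes with $V_x$, which the paper only asserts as easy to see.
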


\begin{proof}
    Let $\mathscr R_x\colon\R^9\to\R^9$ denote the reflection with respect to the $y$-axis, namely
    \begin{equation}\label{eq:Rx}
        (\mathscr R_x f)_i=f_{i_x},
        \qquad
        c_{i_x}=(-c_{ix},c_{iy}).
    \end{equation}
    It is easy to see that both $D_\gamma$ and $\mathscr C$ commute with $\mathscr R_x$ and that
    \[
        \mathscr R_x
        \left(
            V_x \tilde \kappa_-
        \right)
        =
        -
        V_x \tilde \kappa_-.
    \]
    Hence, applying $\mathscr R_x$ to~\eqref{eq:chi-equation}, we obtain
    \begin{equation}
        (D_{\gamma}-\mathscr C)(-\mathscr R_x \chi_{\gamma}^{-})=-\gamma V_x \tilde \kappa_-.
    \end{equation}
    Since $(-\mathscr R_x \chi_{\gamma}^{-})_N=0$ by~\eqref{eq:chi-normalization}, from the uniqueness of the solution to~\eqref{eq:chi-equation} with the normalization condition~\eqref{eq:chi-normalization}, it follows that
    \begin{equation}
        \chi_{\gamma}^{-}=-\mathscr R_x \chi_{\gamma}^{-},
    \end{equation}
    which proves~\eqref{eq:chi-symmetry}. Identities~\eqref{eq:chi-density-normal-momentum} follow immediately from~\eqref{eq:chi-symmetry}.

    It remains to compute
    \[
        j_x^\chi
        \coloneqq
        \sum_{i=0}^8c_{ix}(\chi_\gamma^-)_i.
    \]
    Using~\eqref{eq:chi-density-normal-momentum}, we observe that
    \begin{equation}\label{eq:jxchi_1}
        (\Pi \chi_\gamma^-)_{NE}
        =
        3w_{NE} \cdot j_x^\chi
        =
        \frac1{12}j_x^\chi.
    \end{equation}
    Since $\gamma^{(c_{NE})_y}-\gamma=0$, the $NE$-component of~\eqref{eq:chi-equation} reads
    \begin{equation}\label{eq:jxchi_2}
        -(1-\gamma)(\Pi \chi_\gamma^-)_{NE}
        =
        (-\gamma V_x \tilde \kappa_-)_{NE}
        =
        \gamma.
    \end{equation}
    The identity~\eqref{eq:chi-tangential-momentum} then follows from~\eqref{eq:jxchi_1} and~\eqref{eq:jxchi_2}.
\end{proof}

With these preparations, we define the tangential Knudsen layer profile at the lower wall
\begin{equation}
    \Phi_{1,\mathrm{tan}}^{-}
    =
    \Phi_{1,\mathrm{tan}}^{-}(t,x,m)
    \colon
    [0,T] \times \T \times \mathbb{N}_0 \to \R^9
\end{equation}
by
\begin{equation}\label{eq:def-Phi1tan-minus}
    \Phi_{1,\mathrm{tan}}^-(t,x,m)
    \coloneqq
    -
    \partial_x\alpha_-(t,x)
    \gamma^m\chi_\gamma^-.
\end{equation}
For the upper wall, let $\mathscr R_y\colon\R^9\to\R^9$ denote the reflection with respect to the $x$-axis, namely
\begin{equation}
    (\mathscr R_y f)_i=f_{i_y},
    \qquad
    c_{i_y}=(c_{ix},-c_{iy}),
\end{equation}
and define
\begin{equation}
    \chi_\gamma^+
    \coloneqq
    \mathscr R_y\chi_\gamma^-,
    \qquad
    \Phi_{1,\mathrm{tan}}^+(t,x,m)
    \coloneqq
    -
    \partial_x\alpha_+(t,x)
    \gamma^m\chi_\gamma^+.
    \label{eq:def-Phi1tan-plus}
\end{equation}
We next verify that the identity~\eqref{eq:tangential-cancellation-minus} holds.

\begin{lemma}\label{lem:tangential-equation}
    Suppose that $\gamma \in (-1,1) \setminus \{ 0 \}$. Then we have the identity
    \begin{equation}
    \mathscr L_{(0)}^{\pm}
    \Phi_{1,\mathrm{tan}}^{\pm}
    +
    \mathscr L_{(1)}^{\pm}
    \Phi_0^{\pm}
    =
    0.
    \end{equation}
\end{lemma}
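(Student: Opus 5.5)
The identity reduces to linear algebra: both terms are $\gamma^m$ times an $(t,x)$-dependent scalar times a fixed vector in $\R^9$. I treat the lower wall first and then obtain the upper wall by the reflection $\mathscr R_y$.

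\emph{Lower wall.} Since $\partial_x\alpha_-(t,x)$ does not depend on $m$ and $\mathscr L_{(0)}^-$ acts only in $m$ (and pointwise in $i$), the identity \eqref{eq:L0-gamma-chi} gives
\[
    \mathscr L_{(0)}^-\Phi_{1,\mathrm{tan}}^-
    =
    -\partial_x\alpha_-(t,x)\,\gamma^m(D_\gamma-\mathscr C)\chi_\gamma^-
    =
    \partial_x\alpha_-(t,x)\,\gamma^{m+1}V_x\tilde\kappa_-,
\]
where the second equality uses the defining equation \eqref{eq:chi-equation} for $\chi_\gamma^-$. Adding \eqref{eq:L1-Phi0-minus} for the lower sign cancels this exactly. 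I will briefly recheck \eqref{eq:L1-Phi0-minus} from \eqref{eq:L1-minus}. Nonzero components of $\tilde\kappa_-$ have $c_{iy}=1$, so $\Phi_{0,i}^-(t,x,m-1)=-\alpha_-\gamma^{m-1}\tilde\kappa_{-,i}$. I expect an issue about the power of $\gamma$ here and will reconcile the exponent with the stated formula, which has $\gamma^{m+1}$.

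\emph{Upper wall.} I conjugate by $\mathscr R_y$, which maps $\tilde\kappa_-$ to $\tilde\kappa_+$ and commutes with $\mathscr C$ and $V_x$. Let $D_\gamma^+:=\operatorname{diag}(\gamma^{-c_{iy}})$. Then $\mathscr R_y D_\gamma\mathscr R_y=D_\gamma^+$, and for $\mathscr L_{(0)}^+$ (shift $m\mapsto m+c_{iy}$) one has $\mathscr L_{(0)}^+(\gamma^m\chi)=\gamma^m(D_\gamma^+-\mathscr C)\chi$. Applying $\mathscr R_y$ to \eqref{eq:chi-equation} shows that $\chi_\gamma^+$ solves $(D_\gamma^+-\mathscr C)\chi_\gamma^+=-\gamma V_x\tilde\kappa_+$. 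The computation then goes exactly as for the lower wall, with \eqref{eq:L1-Phi0-minus} for the plus sign supplying the cancelling term.

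The main (mild) obstacle is careful bookkeeping of the $m$-shifts and the convention for $m\mp c_{iy}<0$, checking that the identity holds for all $m\ge0$ in the components where the shifted argument is defined. I also need to keep the shift exponent consistent with \eqref{eq:L1-Phi0-minus}: for components with $c_{iy}=0$ there is no shift, while $\tilde\kappa_-$ vanishes on the components with $c_{iy}=-1$.
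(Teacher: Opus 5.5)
Your argument is correct and is essentially the paper's proof. For the lower wall you combine \eqref{eq:L0-gamma-chi} and \eqref{eq:chi-equation} to get $\mathscr L_{(0)}^-\Phi_{1,\mathrm{tan}}^-=\partial_x\alpha_-\gamma^{m+1}V_x\tilde\kappa_-$, which cancels \eqref{eq:L1-Phi0-minus}. For the upper wall you conjugate at the matrix level: you check $\mathscr R_y D_\gamma\mathscr R_y=\operatorname{diag}(\gamma^{-c_{iy}})$ and that $\chi_\gamma^+$ solves the reflected equation. The paper instead proves the operator intertwining $\mathscr L_{(k)}^+\circ\mathscr R_y=\mathscr R_y\circ\mathscr L_{(k)}^-$ and reflects auxiliary lower-wall profiles built from $\alpha_+$. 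The two routes are equivalent.

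One thing must be fixed: the exponent ``issue'' you anticipate does not exist, because you read $\mp$ backwards. In $\mathscr T_h^\pm$ the upper sign goes with $+$. So $\mathscr L_{(k)}^-$ evaluates at $m+c_{iy}$, consistent with $m_-(\ell+c_{iy})=m_-(\ell)+c_{iy}$ in the interior residual, and $\mathscr L_{(k)}^+$ evaluates at $m-c_{iy}$. Hence on the components with $c_{iy}=1$ you get $\Phi_{0,i}^-(t,x,m+1)=-\alpha_-\gamma^{m+1}\tilde\kappa_{-,i}$, and \eqref{eq:L1-Phi0-minus} holds with $\gamma^{m+1}$ exactly as stated.

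Your $m-1$ reading is in fact incompatible with what you used a line earlier. Under that reading, \eqref{eq:L0-gamma-chi} would hold with $\operatorname{diag}(\gamma^{-c_{iy}})$ instead of $D_\gamma$, and even $\mathscr L_{(0)}^-\Phi_0^-=0$ would fail. Likewise, your parenthetical ``shift $m\mapsto m+c_{iy}$'' for $\mathscr L_{(0)}^+$ should read $m\mapsto m-c_{iy}$. Only with that shift does $\mathscr L_{(0)}^+(\gamma^m\chi)=\gamma^m(D_\gamma^+-\mathscr C)\chi$ hold, so your formula is right but the stated shift contradicts it.

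With the convention fixed, the only undefined components are $i\in\{S,SW,SE\}$ at $m=0$ for the lower wall, and $i\in\{N,NE,NW\}$ at $m=0$ for the upper wall. These are exactly the bounce-back directions excluded from the interior residual, so reading the identity componentwise where defined, as you propose, is correct.
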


\begin{proof}
    For $\Phi_{0}^{-}$ and $\Phi_{1,\mathrm{tan}}^{-}$, this is a direct consequence of~\eqref{eq:L1-Phi0-minus},~\eqref{eq:L0-gamma-chi},~\eqref{eq:chi-equation}, and the definition of $\chi_{\gamma}^{-}$. For $\Phi_{0}^{+}$ and $\Phi_{1,\mathrm{tan}}^{+}$, we use the reflection $\mathscr R_y$. We also let $\mathscr R_y$ act on profiles in a pointwise manner. Since $\mathscr C$ commutes with $\mathscr R_y$, the definitions~\eqref{eq:L0-minus} and~\eqref{eq:L1-minus} of
    $\mathscr L_{(0)}^\pm$ and $\mathscr L_{(1)}^\pm$ give
    \begin{equation}\label{eq:L-reflection}
        \mathscr L_{(k)}^+
        \circ
        \mathscr R_y
        =
        \mathscr R_y
        \circ
        \mathscr L_{(k)}^-,
        \qquad
        k=0,1.
    \end{equation}
    Define the auxiliary lower-wall profiles
    \[
        \Psi_0^-(t,x,m)
        \coloneqq
        -\alpha_+(t,x)\gamma^m\tilde\kappa_-,
        \qquad
        \Psi_{1,\mathrm{tan}}^-(t,x,m)
        \coloneqq
        -\partial_x\alpha_+(t,x)\gamma^m\chi_\gamma^-.
    \]
    The lower-wall calculations above are unchanged with $\alpha_-$ replaced by $\alpha_+$, and therefore
    \begin{equation}\label{eq:Psi}
        \mathscr L_{(0)}^-
        \Psi_{1,\mathrm{tan}}^-
        +
        \mathscr L_{(1)}^-
        \Psi_0^-
        =
        0.
    \end{equation}
    Since
    \[
        \mathscr R_y\tilde\kappa_-=\tilde\kappa_+,
        \qquad
        \chi_\gamma^+=\mathscr R_y\chi_\gamma^-,
    \]
    we have
    \[
        \Phi_0^+
        =
        \mathscr R_y\Psi_0^-,
        \qquad
        \Phi_{1,\mathrm{tan}}^+
        =
        \mathscr R_y\Psi_{1,\mathrm{tan}}^-.
    \]
    Hence, by~\eqref{eq:L-reflection} and~\eqref{eq:Psi}, we obtain
    \begin{equation}
        \mathscr L_{(0)}^+
        \Phi_{1,\mathrm{tan}}^+
        +
        \mathscr L_{(1)}^+
        \Phi_0^+
        =
        \mathscr R_y
        \left(
            \mathscr L_{(0)}^-
            \Psi_{1,\mathrm{tan}}^-
            +
            \mathscr L_{(1)}^-
            \Psi_0^-
        \right)
        =0.
    \end{equation}
    This proves the assertion for both walls.
\end{proof}

Now, let
\begin{equation}\label{def:Phi1tan_real}
    \Phi_{1,\mathrm{tan},h}^\pm(t,z_{j,\ell})
    \coloneqq
    \Phi_{1,\mathrm{tan}}^\pm
    \left(
        t,jh,m_\pm(\ell)
    \right),
    \qquad
    (t,j,\ell) \in [0,T] \times \mathbb{Z}/M\mathbb{Z} \times \{ 0,\ldots,M-1 \}
\end{equation}
be the lattice realizations of $\Phi_{1,\mathrm{tan}}^{\pm}$. The next proposition gives the interior residual of $\Phi_{1,\mathrm{tan},h}^{\pm}$.

\begin{proposition}
\label{prop:interior-Phi1tan}
Suppose that $\gamma \in (-1,1) \setminus \{ 0 \}$. For $i \in \{0,\ldots,8\}$ and $z_{j,\ell} \in \Omega_h$ with $z_{j,\ell}+c_i h \in \Omega_h$, we have
\begin{equation}\label{eq:interior-Phi1tan}
    r_{h,i}[\Phi_{1,\mathrm{tan},h}^{\pm}]
    (t,z_{j,\ell})
    =
    \partial_x\alpha_\pm(t,jh)
    \gamma^{m_\pm(\ell)+1}
    \left(
        V_x\tilde\kappa_\pm
    \right)_i
    +
    O\left(
        h\Gamma^{m_\pm(\ell)}
    \right)
\end{equation}
uniformly for $0\leq t\leq T-h^2$.
\end{proposition}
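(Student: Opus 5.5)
I will prove the assertion for the lower wall, $\Phi_{1,\mathrm{tan},h}^{-}$, and then transfer it to the upper wall by the reflection $\mathscr R_y$, as in the proof of Lemma~\ref{lem:tangential-equation}. Fix $z_{j,\ell}\in\Omega_h$ and $i$ with $z_{j,\ell}+c_ih\in\Omega_h$. Then $\ell+c_{iy}\in\{0,\ldots,M-1\}$ and $m_-(\ell+c_{iy})=m_-(\ell)+c_{iy}$. By the definition of $r_h$ and of the lattice realization,
\begin{equation}
    r_{h,i}[\Phi_{1,\mathrm{tan},h}^{-}](t,z_{j,\ell})
    =
    -\partial_x\alpha_-(t+h^2,jh+c_{ix}h)\gamma^{m+c_{iy}}(\chi_\gamma^-)_i
    +\partial_x\alpha_-(t,jh)\gamma^{m}\bigl(\mathscr C\chi_\gamma^-\bigr)_i ,
\end{equation}
where $m=m_-(\ell)$. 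Here I use that $\mathscr C$ acts pointwise and linearly, so $\mathscr C(a\gamma^m\chi)=a\gamma^m\mathscr C\chi$ for scalar $a$.

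Next I split off the frozen-coefficient part. Writing $\partial_x\alpha_-(t+h^2,jh+c_{ix}h)=\partial_x\alpha_-(t,jh)+O(h)$ by Taylor's theorem, which is uniform on $[0,T]\times\T$ since $\alpha_-$ is smooth, the residual becomes
\begin{equation}
    -\partial_x\alpha_-(t,jh)\gamma^m\bigl((D_\gamma-\mathscr C)\chi_\gamma^-\bigr)_i
    +O\bigl(h\,\Gamma^{m+c_{iy}}\bigr).
\end{equation}
Here $\gamma^{m+c_{iy}}=\gamma^m\gamma^{c_{iy}}$ is exactly the $i$-th diagonal entry of $D_\gamma$ times $\gamma^m$; this is identity~\eqref{eq:L0-gamma-chi} in lattice form. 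Since $\Gamma\le 1$ and $|\Gamma^{m+c_{iy}}|\le \Gamma^{-1}\Gamma^{m}$, the error is $O(h\Gamma^{m})$ provided $\gamma\neq0$, with a constant depending on $\tau$, which is fixed. Finally I substitute equation~\eqref{eq:chi-equation}, $(D_\gamma-\mathscr C)\chi_\gamma^-=-\gamma V_x\tilde\kappa_-$. The main term then equals $\partial_x\alpha_-(t,jh)\gamma^{m+1}(V_x\tilde\kappa_-)_i$, which is~\eqref{eq:interior-Phi1tan}.

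For the upper wall, the same computation goes through with $m_+(\ell+c_{iy})=m_+(\ell)-c_{iy}$, which produces the factor $\gamma^{-c_{iy}}$. This is the diagonal entry of $\mathscr R_yD_\gamma\mathscr R_y$. Applying $\mathscr R_y$ to~\eqref{eq:chi-equation}, and using that $\mathscr C$ commutes with $\mathscr R_y$, that $\mathscr R_yV_x=V_x\mathscr R_y$, and that $\mathscr R_y\tilde\kappa_-=\tilde\kappa_+$, gives $(\mathscr R_yD_\gamma\mathscr R_y-\mathscr C)\chi_\gamma^+=-\gamma V_x\tilde\kappa_+$. The rest is identical.

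The only delicate point is the bookkeeping of the exponent. The shift $m\mapsto m\pm c_{iy}$ must be valid, which holds because both endpoints are lattice sites. The decay must also be stated as $\Gamma^{m}$ rather than $\Gamma^{m+1}$, because of the $\gamma^{-1}$ factor when $c_{iy}=\mp1$. This is why the error term in the statement is $O(h\Gamma^{m_\pm(\ell)})$.
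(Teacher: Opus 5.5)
Your proof is correct and follows essentially the same route as the paper: you split off the frozen-coefficient term $-\partial_x\alpha_-(t,jh)\gamma^{m}\bigl((D_\gamma-\mathscr C)\chi_\gamma^-\bigr)_i$ plus a Taylor remainder, substitute the defining equation for $\chi_\gamma^-$, and handle the upper wall by reflection with $\mathscr R_y$. Your explicit remark that the $\gamma^{-1}$ entry of $D_\gamma$ makes the error $O(h\Gamma^{m_\pm(\ell)})$ rather than $O(h\Gamma^{m_\pm(\ell)+1})$ clarifies a point the paper leaves implicit in the factor $(D_\gamma\chi_\gamma^-)_i$.
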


\begin{proof}
    We prove the assertion for $\Phi_{1,\mathrm{tan},h}^{-}$; the claim for $\Phi_{1,\mathrm{tan},h}^{+}$ follows from a reflection argument similar to that in the proof of Lemma~\ref{lem:tangential-equation}. By~\eqref{eq:def-Phi1tan-minus} and~\eqref{def:Phi1tan_real}, we have
    \begin{equation}
        r_{h,i}[\Phi_{1,\mathrm{tan},h}^-](t,z_{j,\ell})
        =
        -\partial_x \alpha_-(t+h^2,jh+c_{ix} h)
        \gamma^{m_-(\ell)+c_{iy}}
        (\chi_\gamma^-)_i
        +
        \partial_x \alpha_-(t,jh)\gamma^{m_-(\ell)}
        (\mathscr C\chi_\gamma^-)_i.
    \end{equation}
    Hence
    \begin{align}
        r_{h,i}[\Phi_{1,\mathrm{tan},h}^-](t,z_{j,\ell})
        & =
        -\partial_x \alpha_-(t,jh)
        \gamma^{m_-(\ell)}
        \left(
            (D_\gamma-\mathscr C)\chi_\gamma^-
        \right)_i
        \\
        & \qquad
        -
        \left\{
            \partial_x \alpha_-(t+h^2,jh+c_{ix} h)-\partial_x \alpha_-(t,jh)
        \right\}
        \gamma^{m_-(\ell)}
        (D_\gamma\chi_\gamma^-)_i.
    \end{align}
    By~\eqref{eq:chi-equation} and the definition of $\chi_{\gamma}^{-}$, the first term on the right-hand side equals
    \[
        \partial_x\alpha_-(t,jh)
        \gamma^{m_-(\ell)+1}
        \left(
            V_x\tilde\kappa_-
        \right)_i.
    \]
    On the other hand, Taylor's theorem gives
    \[
        \partial_x \alpha_-(t+h^2,jh+c_{ix}h)-\partial_x \alpha_-(t,jh)=O(h).
    \]
    Combining these proves~\eqref{eq:interior-Phi1tan} for $\Phi_{1,\mathrm{tan},h}^{-}$.
\end{proof}

Using the tangential Knudsen layer correctors $\Phi_{1,\mathrm{tan},h}^{\pm}$, we refine the prediction function $\hat{f}_h^{[1,1]}$ further to
\begin{align}\label{eq:def-f111}
    \begin{aligned}
        \hat{f}_{h}^{[1,1,1]}
        & \coloneqq
        \hat f_h^{[1,1]}
        +
        h^4
        \left(
            \Phi_{1,\mathrm{tan},h}^{-}+\Phi_{1,\mathrm{tan},h}^{+}
        \right)
        \\
        & =
        \mathcal P_h[u+h^2 v_0,p+h^2 \pi_0]
        +
        h^3
        \left(
            \Phi_{0,h}^-+\Phi_{0,h}^+
        \right)
        +
        h^4
        \left(
            \Phi_{1,\mathrm{tan},h}^{-}+\Phi_{1,\mathrm{tan},h}^{+}
        \right)
    \end{aligned}
\end{align}
when $\gamma \in (-1,1) \setminus \{ 0 \}$. In the case of $\gamma=0$, we do not need an update, and we set
\begin{equation}
    \hat{f}_{h}^{[1,1,1]}
    \coloneqq
    \hat f_h^{[1,1]}.
\end{equation}
Applying Proposition~\ref{prop:interior-Phi1tan}, we obtain the following proposition.

\begin{proposition}\label{prop:asymptotics-[1,1,1]}
    Let
    \begin{equation}
        \hat r_h^{[1,1,1]}
        \coloneqq
        r_h[\hat f_h^{[1,1,1]}],
        \qquad
        \widehat R_h^{\pm,[1,1,1]}
        \coloneqq
        R_h^\pm[\hat f_h^{[1,1,1]}].
    \end{equation}
    For the interior residual $\hat{r}_{h}^{[1,1,1]}$, we have
    \begin{equation}\label{eq:interior-[1,1,1]}
        \hat r_{h}^{[1,1,1]}(t,z_{j,\ell})
        =
        O(h^6)
        +
        \sum_{\sigma=+,-}O(h^5 \Gamma^{m_{\sigma}(\ell)})
    \end{equation}
    uniformly in $(t,z_{j,\ell}) \in [0,T-h^2] \times \Omega_h$. In particular,
    \begin{equation}\label{eq:mass-interior-[1,1,1]}
        h^2
        \sum_{z\in\Omega_h}
        \sum_{i=0}^8
        \hat r_{h,i}^{[1,1,1]}(t,z)
        =O(h^6)
    \end{equation}
    uniformly in $t \in [0,T-h^2]$. Moreover, for the boundary residuals $\widehat{R}_{h}^{\pm,[1,1,1]}$, there exist smooth functions
    \[
        R^{\pm,[1,1,1],(4)}
        \colon
        [0,T] \times \T \to \R^3
    \]
    independent of $h$ such that
    \begin{equation}\label{eq:boundary-[1,1,1]}
        \widehat R_h^{\pm,[1,1,1]}
        =
        h^4R^{\pm,[1,1,1],(4)}
        +
        O(h^5)
    \end{equation}
    uniformly in $[0,T-h^2] \times \T_h$.
\end{proposition}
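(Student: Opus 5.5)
Both residual operators are linear, so I would split $\hat f_h^{[1,1,1]} = \hat f_h^{[1,1]} + h^4(\Phi_{1,\mathrm{tan},h}^- + \Phi_{1,\mathrm{tan},h}^+)$ and combine Proposition~\ref{prop:asymptotics-[1,1]} with Proposition~\ref{prop:interior-Phi1tan}. The case $\gamma=0$ is handled separately and is simple. There $\Gamma^{m}=0$ for $m\geq1$ and $\gamma^{m+1}=0$ for all $m\geq0$, so the $h^4$ term in~\eqref{eq:interior-[1,1]} is absent and~\eqref{eq:interior-[1,1,1]} follows at once.

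\emph{Interior estimate.} For $\gamma\neq0$, multiplying~\eqref{eq:interior-Phi1tan} by $h^4$ gives
\[
h^4\partial_x\alpha_\sigma\gamma^{m_\sigma(\ell)+1}V_x\tilde\kappa_\sigma+O(h^5\Gamma^{m_\sigma(\ell)}).
\]
This cancels the leading $h^4$ term in~\eqref{eq:interior-[1,1]}. What remains is $O(h^6)+\sum_\sigma O(h^5\Gamma^{m_\sigma(\ell)})$, using $\Gamma^{m+1}\le\Gamma^m$.

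\emph{Mass identity~\eqref{eq:mass-interior-[1,1,1]}.} Summing the pointwise bound directly gives
\[
h^2\sum_{z} h^5\Gamma^{m_\sigma(\ell)} \;=\; h^2\cdot M\cdot h^5\sum_{m\ge0}\Gamma^m \;=\; O(h^6),
\]
since there are $M=h^{-1}$ columns and the geometric series $\sum_m\Gamma^m$ converges. The $O(h^6)$ part contributes $O(h^6)$ because the total weight $h^2|\Omega_h|$ equals $1$. So the crude pointwise bound already suffices, and no moment cancellation is needed.

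\emph{Boundary expansion.} The main work is showing that the $O(h^4)$ boundary term has an $h$-independent smooth leading coefficient, not just an $O(h^4)$ bound.

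First, for $\hat f_h^{[1,1]}$ I would refine the proofs of Propositions~\ref{prop:asymptotics-[1]} and~\ref{prop:boundary-Phi0}. By Lemma~\ref{lem:pred_error}, the prediction part $\mathcal P_h[u,p]$ yields $h^3R^{\pm,(3)}+h^4R^{\pm,(4)}+O(h^5)$. Getting this requires carrying the Taylor expansion one order further; this is available since $\mathcal P_h$ is polynomial in $h$ with smooth coefficients. The corrector $h^2\mathcal P_h[v_0,\pi_0]$ is expanded to second order as in Lemma~\ref{lem:pred_error_inhom}: its boundary residual is $h\mathcal C_\pm g_0^\pm+h^2(\text{smooth})+O(h^3)$, where the $h^2$ coefficient comes from $\tfrac h2\partial_y v_0$, from $f^{(2)}$-terms and from $\partial_t$, all evaluated at the wall. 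The Knudsen term $h^3\Phi_{0,h}^\pm$ gives exactly $-h^3\alpha_\pm(t+h^2,x)\kappa_\pm=-h^3\alpha_\pm\kappa_\pm+O(h^5)$, and the opposite-wall contributions are exponentially small.

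Second, for the new term, the boundary residual of $h^4\Phi^\pm_{1,\mathrm{tan},h}$ at $m=0$ is computed exactly as in Proposition~\ref{prop:boundary-Phi0}. For $i$ with $z+c_ih\notin\Omega_h$ it equals
\[
-h^4\partial_x\alpha_\pm(t+h^2,x)(\chi^\pm_\gamma)_{i^*}+h^4\partial_x\alpha_\pm(t,x)(\mathscr C\chi^\pm_\gamma)_i ,
\]
which is $h^4$ times a smooth $h$-independent function plus $O(h^6)$. The opposite-wall term is again $O(h^4\Gamma^{M-1})=O(h^5)$.

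Collecting the $h^4$ coefficients then defines $R^{\pm,[1,1,1],(4)}$. The only delicate point is bookkeeping of the second-order Taylor terms at $z_x^\pm$ for $\mathcal P_h[v_0,\pi_0]$. The requisite regularity is assumed for $v_0,\pi_0,W,Q$ by the compatibility conditions.
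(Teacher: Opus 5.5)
Your proposal is correct and follows essentially the same route as the paper. You use linearity of the residual operators, cancel the $h^4$ interior terms against Proposition~\ref{prop:interior-Phi1tan}, bound the mass sum with the geometric series in $\Gamma$, and Taylor-expand each piece of $\hat f_h^{[1,1,1]}$ to identify the $h$-independent $h^4$ boundary coefficient. Your version only spells out the one-order-further expansion and the exponential smallness of the opposite-wall terms, which the paper compresses into one sentence. There is one harmless slip: the time shift $t+h^2$ does not contribute to the $h^2$ coefficient of $R_h^\pm[\mathcal P_h[v_0,\pi_0]]$; it first enters at order $h^3$.
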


\begin{proof}
    The case of $\gamma=0$ is clearly easier and we shall assume $\gamma \neq 0$. By~\eqref{eq:def-f111} and the linearity of the residual operator $r_h$, we have
    \begin{equation}
        \hat r_h^{[1,1,1]}
        =
        \hat r_h^{[1,1]}
        +
        h^4
        \left(
            r_h[\Phi_{1,\mathrm{tan},h}^-]
            +
            r_h[\Phi_{1,\mathrm{tan},h}^+]
        \right).
    \end{equation}
    The leading $O(h^4)$ terms in~\eqref{eq:interior-[1,1]} are canceled by the leading term in~\eqref{eq:interior-Phi1tan}. This yields~\eqref{eq:interior-[1,1,1]}. Summing~\eqref{eq:interior-[1,1,1]} over the lattice sites $z \in \Omega_h$ and using
    \[
        \sum_{m=0}^{M-1}\Gamma^m\leq\frac1{1-\Gamma}
    \]
    gives~\eqref{eq:mass-interior-[1,1,1]}.
    
    For the boundary residuals, noting that $\Phi_{1,\mathrm{tan},h}^{\pm}$ appear as $O(h^4)$ terms in~\eqref{eq:def-f111}, the bound~\eqref{eq:boundary-[1,1]} and Taylor's theorem imply the existence of smooth functions
    \[
        R^{\pm,[1,1,1],(4)}
        \colon
        [0,T] \times \T \to \R^3
    \]
    such that
    \[
        \widehat R_h^{\pm,[1,1,1]}
        =
        h^4 R^{\pm,[1,1,1],(4)}
        +
        O(h^5).
    \]
    This proves~\eqref{eq:boundary-[1,1,1]}.
\end{proof}

\subsection{Second Stokes and Knudsen layer correctors}\label{sec:second-correctors}

The interior residual and the boundary residuals of $\hat{f}_{h}^{[1,1,1]}$ are
\[
    O(h^6)+\sum_{\sigma=+,-}O(h^5 \Gamma^{m_{\sigma}(\ell)})
\]
and $O(h^4)$, respectively, by~\eqref{eq:interior-[1,1,1]} and~\eqref{eq:boundary-[1,1,1]}. To prove Theorem~\ref{thm:optimal_convergence}, we need to further remove the $O(h^4)$ term in~\eqref{eq:boundary-[1,1,1]}. This will be done in the current section. However, we do not need new constructions and we only need to repeat the constructions of the Stokes and the Knudsen layer correctors in previous sections (the tangential Knudsen layer correctors are unnecessary).

We first repeat the construction of the Stokes corrector. Using again the direct-sum decomposition~\eqref{eq:boundary-direct-sum}, define uniquely
\[
    g_1^\pm \colon [0,T] \times \T \to \R^2,
    \qquad
    \beta_\pm \colon [0,T] \times \T \to \R
\]
by the equations
\begin{equation}\label{eq:def-g1-beta}
    R^{\pm,[1,1,1],(4)}
    +
    \mathcal C_\pm g_1^\pm
    =
    \beta_\pm \kappa_\pm,
\end{equation}
where $R^{\pm,[1,1,1],(4)}$ are the smooth functions in~\eqref{eq:boundary-[1,1,1]}. We check that $g_{1}^{\pm}$ satisfy the necessary admissibility condition.

\begin{lemma}\label{lem:second-flux-compatibility}
    The functions $g_1^\pm$ satisfy
    \begin{equation}\label{eq:second-flux-compatibility}
        -\int_{\T}
            (g_1^-)_y(t,x) \dd x
        +
        \int_{\T}
            (g_1^+)_y(t,x) \dd x
        =
        0
    \end{equation}
    for every $t\in[0,T]$.
\end{lemma}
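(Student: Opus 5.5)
We follow the proof of Lemma~\ref{lem:first-flux-compatibility} step for step, with $\hat f_h$ replaced by $\hat f_h^{[1,1,1]}$ and everything shifted up by one power of $h$. Applying the functional $\mu$ from~\eqref{eq:mu} to~\eqref{eq:def-g1-beta} and using $\mu(\mathcal C_\pm G)=\mp G_y$ and $\mu(\kappa_\pm)=0$, we get
\[
    -(g_1^-)_y=\mu(R^{-,[1,1,1],(4)}),
    \qquad
    (g_1^+)_y=\mu(R^{+,[1,1,1],(4)}).
\]
So it suffices to show that $\int_\T \mu(R^{-,[1,1,1],(4)})\dd x+\int_\T\mu(R^{+,[1,1,1],(4)})\dd x=0$.

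We use the total mass $M_h$ from~\eqref{eq:operator_mass} together with the conservation property~\eqref{eq:mass-preservation-operators}. Applying $M_h$ to $\mathcal E_h[\hat f_h^{[1,1,1]}]=r_h[\hat f_h^{[1,1,1]}]+R_h[\hat f_h^{[1,1,1]}]$ gives
\[
    M_h[\hat f_h^{[1,1,1]}(t+h^2)]-M_h[\hat f_h^{[1,1,1]}(t)]
    =
    M_h[\hat r_h^{[1,1,1]}(t)]
    +
    h^2\sum_{j}\left\{\mu(\widehat R_h^{-,[1,1,1]}(t,jh))+\mu(\widehat R_h^{+,[1,1,1]}(t,jh))\right\}.
\]
By~\eqref{eq:mass-interior-[1,1,1]}, the interior term is $O(h^6)$. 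By~\eqref{eq:boundary-[1,1,1]}, the boundary sum equals
\[
    h^5\left[h\sum_j\left\{\mu(R^{-,[1,1,1],(4)})+\mu(R^{+,[1,1,1],(4)})\right\}\right]+O(h^6).
\]

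The main step is to show that the left-hand side is $O(h^6)$. We split $\hat f_h^{[1,1,1]}$ into three pieces.

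\emph{The piece $\mathcal P_h[u^{[1]},p^{[1]}]$.} Its density is $1+3h^2p^{[1]}+3h^4 Q^{[1]}$, where $Q^{[1]}=q+h^2(\cdot)$ is the $Q$-component from~\eqref{eq:higher_Stokes}. Every pressure-type quantity here has zero spatial mean by the normalizations. Since the half-way lattice sum is a second-order quadrature, its contribution to the time difference is $O(h^2)\cdot h^2\cdot O(h^2)=O(h^6)$, exactly as in~\eqref{eq:fhat_time_taylor}.

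\emph{The piece $h^3\Phi^\pm_{0,h}$.} This has zero density because of~\eqref{eq:kappa-zero-moments}, so it contributes nothing.

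\emph{The piece $h^4\Phi^\pm_{1,\mathrm{tan},h}$.} This has zero density by~\eqref{eq:chi-density-normal-momentum}; the same holds for $\chi^+_\gamma$, since $\mathscr R_y$ only permutes components. So it also contributes nothing.

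Putting these together, the left-hand side is $O(h^6)$. Dividing by $h^5$ and letting $h\to0$ turns the Riemann sum into the integral and gives the required identity.

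The only point that needs care is the first piece: one must check that the $h^2$-shifted pressure $\pi_0$ and the associated $Q$ still have zero mean for every $t$. This holds by the normalizations imposed in~\eqref{eq:first-Stokes-corrector} and~\eqref{eq:higher_Stokes}, so the extra terms only improve the order.
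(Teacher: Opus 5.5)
Your proposal is correct and follows the same route as the paper. The paper's proof only says that the argument of Lemma~\ref{lem:first-flux-compatibility} carries over to $\hat f_h^{[1,1,1]}$, using three facts: the Knudsen layer profiles have zero density, $\pi_0$ (and hence the associated $Q$) has zero mean, and the interior residual has total mass $O(h^6)$. You use exactly these facts and also supply the omitted bookkeeping: the sign identity $\mu(\mathcal C_\pm G)=\mp G_y$, the $h^5$ scaling of the boundary sum, and the check that the mass balance and the interior term are both $O(h^6)$.
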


\begin{proof}
    Note that the Knudsen layer corrections introduced so far have zero density by~\eqref{eq:kappa-zero-moments} and \eqref{eq:chi-density-normal-momentum}, while the pressure $\pi_0$ is normalized to have zero mean. Moreover, we have
    \begin{equation}
        M_h[\hat{r}_{h}^{[1,1,1]}(t)]
        =
        O(h^6)
    \end{equation}
    by~\eqref{eq:mass-interior-[1,1,1]}. Here, the operator $M_h$ is defined by~\eqref{eq:operator_mass}. Using these properties, an argument similar to the proof of Lemma~\ref{lem:first-flux-compatibility} applies to $\hat f_h^{[1,1,1]}$ as well. The details are omitted.
\end{proof}

Let $v_{1,\mathrm{in}}$ be a smooth divergence-free vector that is periodic in $x$. Taking $g_{1}^{\pm}$ as the boundary data, let $(v_1,\pi_1)$ be the solution to the Stokes equations
\begin{equation}\label{eq:second-Stokes-corrector}
\begin{dcases}
    \partial_t v_1+\nabla\pi_1
    =
    \nu\Delta v_1
    &\text{in $(0,T)\times\Omega$},
    \\
    \nabla\cdot v_1=0
    &\text{in $(0,T)\times\Omega$},
    \\
    v_1(t,x,0)=g_1^-(t,x)
    &\text{for $(t,x)\in(0,T)\times\T$},
    \\
    v_1(t,x,1)=g_1^+(t,x)
    &\text{for $(t,x)\in(0,T)\times\T$},
    \\
    v_1(0,\cdot)=v_{1,\mathrm{in}}
    &\text{in $\Omega$}
\end{dcases}
\end{equation}
with periodicity in $x$ and normalization $\int_\Omega\pi_1(t,x,y)\dd x\dd y=0$. The necessary admissibility condition is guaranteed by Lemma~\ref{lem:second-flux-compatibility}. The initial data $v_{1,\mathrm{in}}$ is required to satisfy the standard compatibility conditions of sufficiently high order. Set
\begin{equation}\label{eq:second-outer-correction}
    u^{[2]}
    \coloneqq
    u+h^2v_0+h^3v_1,
    \qquad
    p^{[2]}
    \coloneqq
    p+h^2\pi_0+h^3\pi_1.
\end{equation}
Then let
\begin{align}\label{eq:def-f211}
    \begin{aligned}
        \hat{f}_{h}^{[2,1,1]}
        & \coloneqq
        \mathcal P_h[u^{[2]},p^{[2]}]
        +
        h^3
        \left(
            \Phi_{0,h}^-+\Phi_{0,h}^+
        \right)
        +
        h^4
        \left(
            \Phi_{1,\mathrm{tan},h}^-
            +
            \Phi_{1,\mathrm{tan},h}^+
        \right)
        \\
        & =
        \hat{f}_{h}^{[1,1,1]}
        +
        \left(
            \mathcal P_h[u^{[2]},p^{[2]}]
            -
            \mathcal P_h[u^{[1]},p^{[1]}]
        \right).
    \end{aligned}
\end{align}
when $\gamma \in (-1,1) \setminus \{ 0 \}$. In the case of $\gamma=0$, let
\begin{align}\label{eq:def-f211-gamma_0}
    \begin{aligned}
        \hat{f}_{h}^{[2,1,1]}
        & \coloneqq
        \mathcal P_h[u^{[2]},p^{[2]}]
        +
        h^3
        \left(
            \Phi_{0,h}^-+\Phi_{0,h}^+
        \right)
        \\
        & =
        \hat{f}_{h}^{[1,1,1]}
        +
        \left(
            \mathcal P_h[u^{[2]},p^{[2]}]
            -
            \mathcal P_h[u^{[1]},p^{[1]}]
        \right).
    \end{aligned}
\end{align}
Applying Lemma~\ref{lem:pred_error} and Lemma~\ref{lem:pred_error_inhom}, we obtain the following proposition.

\begin{proposition}\label{prop:asymptotics-[2,1,1]}
    Let
    \begin{equation}
        \hat r_h^{[2,1,1]}
        \coloneqq
        r_h[\hat f_h^{[2,1,1]}],
        \qquad
        \widehat R_h^{\pm,[2,1,1]}
        \coloneqq
        R_h^\pm[\hat f_h^{[2,1,1]}].
    \end{equation}
    For the interior residual $\hat r_h^{[2,1,1]}$, we have
    \begin{equation}\label{eq:interior-[2,1,1]}
        \hat r_h^{[2,1,1]}(t,z_{j,\ell})
        =
        O(h^6)
        +
        \sum_{\sigma=+,-}
        O\left(
            h^5\Gamma^{m_\sigma(\ell)}
        \right)
    \end{equation}
    uniformly in $(t,z_{j,\ell}) \in [0,T-h^2] \times \Omega_h$. Moreover, the boundary residuals $\widehat{R}_{h}^{\pm,[2,1,1]}$ satisfy
    \begin{equation}\label{eq:boundary-[2,1,1]}
        \widehat R_h^{\pm,[2,1,1]}
        =
        h^4\beta_\pm\kappa_\pm
        +
        O(h^5)
    \end{equation}
    uniformly in $[0,T-h^2] \times \T_h$, where $\beta_{\pm}$ are defined by~\eqref{eq:def-g1-beta}.
\end{proposition}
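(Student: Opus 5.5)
The plan is to use the second formula in \eqref{eq:def-f211}: $\hat f_h^{[2,1,1]}$ is $\hat f_h^{[1,1,1]}$ plus the difference $\mathcal P_h[u^{[2]},p^{[2]}]-\mathcal P_h[u^{[1]},p^{[1]}]$. Since $\mathcal P_h[U,P]-F^{\mathrm{eq}}(1,0)$ is linear in $(U,P)$, this difference equals
\[
h^3\left(\mathcal P_h[v_1,\pi_1]-F^{\mathrm{eq}}(1,0)\right).
\]
The residual operators $r_h$ and $R_h^{\pm}$ are linear, and the constant equilibrium $F^{\mathrm{eq}}(1,0)$ has zero residuals. Hence
\[
\hat r_h^{[2,1,1]}=\hat r_h^{[1,1,1]}+h^3 r_h[v_1,\pi_1],\qquad
\widehat R_h^{\pm,[2,1,1]}=\widehat R_h^{\pm,[1,1,1]}+h^3 R_h^{\pm}[v_1,\pi_1].
\]
The argument is the same for $\gamma\neq0$ and for $\gamma=0$, because both definitions share this second form.

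For the interior, Lemma~\ref{lem:pred_error_inhom} applied to $(v_1,\pi_1)$ gives $r_h[v_1,\pi_1]=O(h^6)$, so the added term is $O(h^9)$. Combined with \eqref{eq:interior-[1,1,1]}, this yields \eqref{eq:interior-[2,1,1]}.

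For the boundary, Lemma~\ref{lem:pred_error_inhom} gives
\[
R_h^{\pm}[v_1,\pi_1]=h\,\mathcal C_\pm g_1^{\pm}+O(h^2),
\]
since the boundary data of $v_1$ are $g_1^\pm$. Adding this to \eqref{eq:boundary-[1,1,1]} gives
\[
\widehat R_h^{\pm,[2,1,1]}=h^4\left(R^{\pm,[1,1,1],(4)}+\mathcal C_\pm g_1^\pm\right)+O(h^5).
\]
By \eqref{eq:def-g1-beta} this equals $h^4\beta_\pm\kappa_\pm+O(h^5)$, which is \eqref{eq:boundary-[2,1,1]}.

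The main obstacle is checking that the hypotheses of Lemma~\ref{lem:pred_error_inhom} hold for $(v_1,\pi_1)$. This needs enough smoothness of $(v_1,\pi_1)$ and of the associated $(W,Q)$ up to time $T$, with constants depending only on $T$. That smoothness comes from the regularity of $g_1^\pm$, which are built from the smooth functions $R^{\pm,[1,1,1],(4)}$. It also needs the compatibility conditions on $v_{1,\mathrm{in}}$, and the flux admissibility condition from Lemma~\ref{lem:second-flux-compatibility}. Granting these, the $O(\cdot)$ bounds are uniform and the proof is pure bookkeeping.
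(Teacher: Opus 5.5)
Your proposal is correct and follows the paper's proof essentially step for step: you write $\hat f_h^{[2,1,1]}-\hat f_h^{[1,1,1]}=h^3(\mathcal P_h[v_1,\pi_1]-F^{\mathrm{eq}}(1,0))$, use linearity of the residual operators, and apply Lemma~\ref{lem:pred_error_inhom} to $(v_1,\pi_1)$. This gives an $O(h^9)$ interior correction and an $h^4\mathcal C_\pm g_1^\pm+O(h^5)$ boundary correction, which combine with \eqref{eq:def-g1-beta} exactly as in the paper. The regularity and compatibility caveats you flag are the same ones the paper takes for granted.
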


\begin{proof}
    By~\eqref{eq:def-f211} or~\eqref{eq:def-f211-gamma_0}, we have
    \begin{equation}\label{eq:f211-minus-f111}
        \hat f_h^{[2,1,1]}
        -
        \hat f_h^{[1,1,1]}
        =
        \mathcal P_h[u^{[2]},p^{[2]}]
        -
        \mathcal P_h[u^{[1]},p^{[1]}].
    \end{equation}
    Since
    \[
        u^{[2]}-u^{[1]}=h^3v_1,
        \qquad
        p^{[2]}-p^{[1]}=h^3\pi_1,
    \]
    we have
    \begin{equation}\label{eq:f211-affine-difference}
        \hat f_h^{[2,1,1]}
        -
        \hat f_h^{[1,1,1]}
        =
        h^3
        \left(
            \mathcal P_h[v_1,\pi_1]
            -
            F^{\mathrm{eq}}(1,0)
        \right).
    \end{equation}
    Since the constant equilibrium $F^{\mathrm{eq}}(1,0)$ has zero interior and boundary residuals, the linearity of the residual operators $r_h$ and $R_{h}^{\pm}$ gives
    \begin{align}
        \hat r_h^{[2,1,1]}
        -
        \hat r_h^{[1,1,1]}
        &=
        h^3 r_h[\mathcal{P}_h[v_1,\pi_1]],
        \label{eq:interior-difference-[2,1,1]}
        \\
        \widehat R_h^{\pm,[2,1,1]}
        -
        \widehat R_h^{\pm,[1,1,1]}
        &=
        h^3 R_h^\pm[\mathcal{P}_h[v_1,\pi_1]].
        \label{eq:boundary-difference-[2,1,1]}
    \end{align}
    By Lemma~\ref{lem:pred_error_inhom}, we have
    \[
        r_h [\mathcal{P}_h[v_1,\pi_1]]=O(h^6),
    \]
    which implies
    \[
        \hat r_h^{[2,1,1]}
        =
        \hat r_h^{[1,1,1]}
        +
        O(h^9).
    \]
    Combining this with~\eqref{eq:interior-[1,1,1]}, we obtain~\eqref{eq:interior-[2,1,1]}. On the other hand, Lemma~\ref{lem:pred_error_inhom} yields
    \begin{equation}
        R_h^\pm [\mathcal{P}_h[v_1,\pi_1]]
        =
        h\mathcal C_\pm g_1^\pm
        +
        O(h^2),
    \end{equation}
    which implies
    \begin{equation}
        \widehat{R}_h^{\pm,[2,1,1]}
        =
        \widehat{R}_h^{\pm,[1,1,1]}
        +
        h^4 \mathcal{C}_{\pm} g_{1}^{\pm}
        +
        O(h^5).
    \end{equation}
    Combining this with~\eqref{eq:boundary-[1,1,1]}, we obtain
    \begin{align}
        \widehat R_h^{\pm,[2,1,1]}
        &=
        h^4
        \left(
            R^{\pm,[1,1,1],(4)}
            +
            \mathcal C_\pm g_1^\pm
        \right)
        +
        O(h^5)
        \\
        &=
        h^4\beta_\pm\kappa_\pm
        +
        O(h^5),
    \end{align}
    where the last equality follows from~\eqref{eq:def-g1-beta}. This proves~\eqref{eq:boundary-[2,1,1]}.
\end{proof}

We next repeat the construction of the Knudsen layer correctors. Let
\begin{equation}\label{eq:def-Phi1-pm}
    \Phi_1^\pm(t,x,m)
    \coloneqq
    -\beta_\pm(t,x)\gamma^m\tilde\kappa_\pm,
    \qquad
    (t,x,m)\in[0,T]\times\T\times\mathbb{N}_0.
\end{equation}
We remind the reader that we are using the convention $0^0=1$. Their lattice realizations are defined by
\begin{equation}\label{eq:def-Phi1h-pm}
    \Phi_{1,h}^\pm(t,z_{j,\ell})
    \coloneqq
    \Phi_1^\pm
    \left(
        t,jh,m_\pm(\ell)
    \right),
    \qquad
    (t,j,\ell) \in [0,T] \times \mathbb{Z}/M\mathbb{Z} \times \{ 0,\ldots,M-1 \}.
\end{equation}
We have
\begin{equation}
    \left|
        \Phi_{1,h}^\pm(t,z_{j,\ell})
    \right|
    \leq
    C_T\Gamma^{m_\pm(\ell)}
\end{equation}
for some constant $C_T>0$ depending only on $T$. The following propositions give the interior and the boundary residuals of $\Phi_{1,h}^{\pm}$. We omit their proofs since they are identical to those of Proposition~\ref{prop:boundary-Phi0} and Proposition~\ref{prop:interior-Phi0}.

\begin{proposition}
\label{prop:boundary-Phi1}
We have
\begin{equation}\label{eq:boundary-Phi1}
    R_h^\pm[\Phi_{1,h}^\pm]
    =
    -\beta_\pm\kappa_\pm
    +
    O(h^2)
\end{equation}
uniformly in $[0,T-h^2] \times \T_h$.
\end{proposition}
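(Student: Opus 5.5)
The plan is to follow the proof of Proposition~\ref{prop:boundary-Phi0} verbatim, with $\alpha_\pm$ replaced by $\beta_\pm$. I treat the lower wall in detail; the upper wall is identical, with $\tilde\kappa_+$ in place of $\tilde\kappa_-$. Fix $0\le t\le T-h^2$, $i\in\{0,\ldots,8\}$, and $z_{j,\ell}\in\Omega_h$ with $z_{j,\ell}+c_ih\notin\Omega_h$. By definition of the boundary residual,
\[
R_{h,i}[\Phi_{1,h}^-](t,z_{j,\ell})
=\Phi_{1,h,i^*}^-(t+h^2,z_{j,\ell})-(\mathscr C\Phi_{1,h}^-(t,z_{j,\ell}))_i .
\]

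The first key step is to evaluate the collision term. By~\eqref{eq:collision-kappa}, $\mathscr C\tilde\kappa_-=\gamma\tilde\kappa_-$, so
\[
(\mathscr C\Phi_{1,h}^-(t,z_{j,\ell}))_i=-\beta_-(t,jh)\gamma^{m_-(\ell)+1}\tilde\kappa_{-,i}.
\]
I claim this vanishes whenever $z_{j,\ell}+c_ih\notin\Omega_h$. Indeed, $\tilde\kappa_{-,i}\neq0$ only for $i\in\{N,NE,NW\}$, for which $c_{iy}=1$. Leaving the domain upward then forces $\ell=M-1$, where $m_-(\ell)=M-1$. So the term is either exactly zero or $O(\Gamma^{M})=O(\Gamma^{1/h})$, which is exponentially small and absorbed into $O(h^2)$.

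The second step is the streamed term. Only the rows relevant to $R_h^-$ matter, namely $i^*\in\{N,NE,NW\}$ at $z_x^-$, i.e.\ $\ell=0$ and $m_-(0)=0$. Using the convention $0^0=1$, we get $\Phi_{1,h,i^*}^-(t+h^2,z_x^-)=-\beta_-(t+h^2,x)\tilde\kappa_{-,i^*}$. By~\eqref{eq:full_to_partial_residual} this yields
\[
R_h^-[\Phi_{1,h}^-](t,x)=-\beta_-(t+h^2,x)\kappa_-+O(\Gamma^{1/h}).
\]
Taylor's theorem in $t$ then gives $\beta_-(t+h^2,x)=\beta_-(t,x)+O(h^2)$, uniformly in $(t,x)$.

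The only point requiring care is that this time expansion needs bounded $\partial_t\beta_\pm$ on $[0,T]\times\T$. Here $\beta_\pm$ are defined through~\eqref{eq:def-g1-beta} by a fixed linear projection, via the direct sum~\eqref{eq:boundary-direct-sum}, of the smooth functions $R^{\pm,[1,1,1],(4)}$. These inherit smoothness from $u,p,w,q,v_0,\pi_0,\alpha_\pm$ under the $C^5$ assumption and the compatibility conditions on initial data. This gives a uniform $C_T$, which completes the proof.
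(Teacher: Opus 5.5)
Your proof is correct and matches the paper's argument: the paper omits this proof as identical to that of Proposition~\ref{prop:boundary-Phi0}, which uses $\mathscr C\tilde\kappa_\pm=\gamma\tilde\kappa_\pm$ to show that the collision term vanishes on the outgoing rows and then Taylor-expands $\beta_\pm(t+h^2,x)$ in time. Your treatment of the $\gamma^{m_\pm(\ell)}$ factors is more careful than the paper's displayed computation, and the exponentially small opposite-wall contribution you track never enters $R_h^\pm$ anyway, since $R_h^\pm$ only samples the outgoing rows at $z_x^\pm$.
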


\begin{proposition}
\label{prop:interior-Phi1}
For $i \in \{0,\ldots,8\}$ and $z_{j,\ell}\in\Omega_h$ with $z_{j,\ell}+c_i h \in \Omega_h$, we have
\begin{align}
\label{eq:interior-Phi1}
    \begin{aligned}
        r_{h,i}[\Phi_{1,h}^\pm](t,z_{j,\ell})
        &=
        -
        \left\{
            \beta_\pm
            \left(
                t+h^2,jh+c_{ix}h
            \right)
            -
            \beta_\pm(t,jh)
        \right\}
        \gamma^{m_\pm(\ell)+1}
        \tilde\kappa_{\pm,i}
        \\
        &=
        -h\partial_x\beta_\pm(t,jh)
        \gamma^{m_\pm(\ell)+1}
        \left(
            V_x\tilde\kappa_\pm
        \right)_i
        +
        O\left(
            h^2\Gamma^{m_\pm(\ell)+1}
        \right)
    \end{aligned}
\end{align}
uniformly for $0\leq t\leq T-h^2$. Here, $V_x$ is defined by~\eqref{eq:def-Vx}.
\end{proposition}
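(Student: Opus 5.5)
The plan is to repeat the proof of Proposition~\ref{prop:interior-Phi0} with $\alpha_\pm$ replaced by $\beta_\pm$, treating only $\Phi_{1,h}^{-}$ in detail. For $\Phi_{1,h}^{+}$ we either repeat the computation with $c_{iy}=-1$ on the support of $\tilde\kappa_+$, or use the reflection $\mathscr R_y$ as in the proof of Lemma~\ref{lem:tangential-equation}. Fix $i$ and $z_{j,\ell}$ with $z_{j,\ell}+c_ih\in\Omega_h$. By definition,
\[
r_{h,i}[\Phi_{1,h}^-](t,z_{j,\ell})=\Phi_{1,h,i}^-(t+h^2,z_{j,\ell}+c_ih)-(\mathscr C\Phi_{1,h}^-(t,z_{j,\ell}))_i .
\]

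First, the collision term. By \eqref{eq:collision-kappa} and \eqref{eq:def-Phi1-pm}, we get $(\mathscr C\Phi_{1,h}^-(t,z_{j,\ell}))_i=-\beta_-(t,jh)\gamma^{m_-(\ell)+1}\tilde\kappa_{-,i}$. This holds for every $i$, including the convention $0^0=1$ when $\gamma=0$, since $\mathscr C\tilde\kappa_-=\gamma\tilde\kappa_-$ is an identity of vectors.

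Second, the streamed term. If $\tilde\kappa_{-,i}=0$, both sides of the claimed identity vanish. Otherwise $i\in\{N,NE,NW\}$, so $c_{iy}=1$, and the shifted site is $z_{j+c_{ix},\ell+1}$ with $m_-(\ell+1)=m_-(\ell)+1$. The $x$-coordinate $jh+c_{ix}h$ is read modulo $1$, which is harmless because $\beta_-$ is periodic. Hence
\[
\Phi_{1,h,i}^-(t+h^2,z_{j,\ell}+c_ih)=-\beta_-(t+h^2,jh+c_{ix}h)\gamma^{m_-(\ell)+1}\tilde\kappa_{-,i}.
\]
Subtracting gives the first (exact) equality in \eqref{eq:interior-Phi1}.

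For the second equality, Taylor-expand $\beta_-(t+h^2,jh+c_{ix}h)-\beta_-(t,jh)=hc_{ix}\partial_x\beta_-(t,jh)+O(h^2)$. The remainder is uniform in $(t,x)\in[0,T]\times\T$ because $\beta_-$ is smooth, so its derivatives up to order two are bounded by $C_T$. Then use $c_{ix}\tilde\kappa_{-,i}=(V_x\tilde\kappa_-)_i$ and $|\gamma^{m+1}|=\Gamma^{m+1}$, which yields the error $O(h^2\Gamma^{m_-(\ell)+1})$.

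There is no real obstacle. The one point needing care is the smoothness of $\beta_\pm$. They are defined through \eqref{eq:def-g1-beta} by a fixed linear projection applied to $R^{\pm,[1,1,1],(4)}$, which are smooth by Proposition~\ref{prop:asymptotics-[1,1,1]}. This gives the required bounded derivatives, and the constants depend only on $T$.
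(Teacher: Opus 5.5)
Your proposal is correct and follows the paper's argument. The paper omits this proof as identical to that of Proposition~\ref{prop:interior-Phi0} with $\alpha_\pm$ replaced by $\beta_\pm$. That is exactly your computation: the collision term via $\mathscr C\tilde\kappa_\pm=\gamma\tilde\kappa_\pm$, the streamed term via $m_\pm$ increasing by one on the support of $\tilde\kappa_\pm$, and then Taylor's theorem in $(t,x)$.
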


Using the correctors $\Phi_{1,h}^\pm$, we finally define the prediction function that we shall use in the proof of Theorem~\ref{thm:optimal_convergence} as follows:
\begin{align}\label{eq:corrected_pred_final}
    \begin{aligned}
        \hat f_h^{[2,2,1]}
        &\coloneqq
        \hat f_h^{[2,1,1]}
        +
        h^4
        \left(
            \Phi_{1,h}^-+\Phi_{1,h}^+
        \right)
        \\
        &=
        \mathcal P_h[u^{[2]},p^{[2]}]
        +
        h^3
        \left(
            \Phi_{0,h}^-+\Phi_{0,h}^+
        \right)
        \\
        &\qquad
        +
        h^4
        \left(
            \Phi_{1,\mathrm{tan},h}^-
            +
            \Phi_{1,\mathrm{tan},h}^+
            +
            \Phi_{1,h}^-
            +
            \Phi_{1,h}^+
        \right)
    \end{aligned}
\end{align}
when $\gamma \in (-1,1) \setminus \{ 0 \}$. In the case of $\gamma=0$, let
\begin{align}\label{eq:corrected_pred_final_gamma-0}
    \begin{aligned}
        \hat f_h^{[2,2,1]}
        &\coloneqq
        \hat f_h^{[2,1,1]}
        +
        h^4
        \left(
            \Phi_{1,h}^-+\Phi_{1,h}^+
        \right)
        \\
        &=
        \mathcal P_h[u^{[2]},p^{[2]}]
        +
        h^3
        \left(
            \Phi_{0,h}^-+\Phi_{0,h}^+
        \right)
        +
        h^4
        \left(
            \Phi_{1,h}^-
            +
            \Phi_{1,h}^+
        \right).
    \end{aligned}
\end{align}
Applying Propositions~\ref{prop:asymptotics-[2,1,1]}--\ref{prop:interior-Phi1} to~\eqref{eq:corrected_pred_final} or~\eqref{eq:corrected_pred_final_gamma-0}, we immediately obtain the following proposition. Note that the contribution of $h^4 \Phi_{1,h}^{\mp}$ to $\widehat{R}_{h}^{\pm,[2,1,1]}$ is $O(h^4 \Gamma^{M-1})$ and is exponentially small (note that $M=h^{-1}$).

\begin{proposition}\label{prop:asymptotics-[2,2,1]}
Let
\begin{equation}
    \hat r_h^{[2,2,1]}
    \coloneqq
    r_h[\hat f_h^{[2,2,1]}],
    \qquad
    \widehat R_h^{\pm,[2,2,1]}
    \coloneqq
    R_h^\pm[\hat f_h^{[2,2,1]}].
\end{equation}
Then the interior residual $\hat{r}_{h}^{[2,2,1]}$ satisfies
\begin{equation}\label{eq:interior-[2,2,1]}
    \hat r_h^{[2,2,1]}(t,z_{j,\ell})
    =
    O(h^6)
    +
    \sum_{\sigma=+,-}
    O\left(
        h^5\Gamma^{m_\sigma(\ell)}
    \right)
\end{equation}
uniformly in $(t,z_{j,\ell}) \in [0,T-h^2] \times \Omega_h$. Moreover, the boundary residuals $\widehat{R}_{h}^{\pm,[2,2,1]}$ satisfy
\begin{equation}\label{eq:boundary-[2,2,1]}
    \widehat R_h^{\pm,[2,2,1]}
    =
    O(h^5)
\end{equation}
uniformly in $[0,T-h^2] \times \T_h$.
\end{proposition}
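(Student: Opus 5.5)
The plan is to use linearity of the residual operators $r_h$ and $R_h^{\pm}$ together with the decomposition
\[
    \hat f_h^{[2,2,1]}
    =
    \hat f_h^{[2,1,1]}
    +
    h^4\left(\Phi_{1,h}^-+\Phi_{1,h}^+\right),
\]
which holds for both $\gamma\neq0$ and $\gamma=0$. Since the residual operators are linear in $F$,
\[
    \hat r_h^{[2,2,1]}
    =
    \hat r_h^{[2,1,1]}
    +
    h^4\left(r_h[\Phi_{1,h}^-]+r_h[\Phi_{1,h}^+]\right),
\]
and the boundary residual $\widehat R_h^{\pm,[2,2,1]}$ decomposes in the same way. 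I will treat the interior and boundary residuals separately.

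For the interior residual, Proposition~\ref{prop:interior-Phi1} gives
\[
    r_{h,i}[\Phi_{1,h}^\pm](t,z_{j,\ell})
    =
    O\left(h\Gamma^{m_\pm(\ell)+1}\right),
\]
since $\partial_x\beta_\pm$ is bounded on $[0,T]\times\T$; indeed, the first exact identity there already shows the difference of $\beta_\pm$ is $O(h)$. Multiplying by $h^4$ yields $O(h^5\Gamma^{m_\pm(\ell)})$. This is exactly the form allowed in~\eqref{eq:interior-[2,2,1]}, so no tangential corrector is needed at this order. Adding this to~\eqref{eq:interior-[2,1,1]} from Proposition~\ref{prop:asymptotics-[2,1,1]} gives~\eqref{eq:interior-[2,2,1]}. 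For components $i$ with $z+c_ih\notin\Omega_h$, $r_{h,i}$ vanishes by definition, so there is nothing to check there.

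For the boundary residual at the lower wall, I will split the added term into the contributions of $\Phi_{1,h}^-$ and $\Phi_{1,h}^+$. By Proposition~\ref{prop:boundary-Phi1}, $h^4R_h^-[\Phi_{1,h}^-]=-h^4\beta_-\kappa_-+O(h^6)$, which cancels the leading term $h^4\beta_-\kappa_-$ in~\eqref{eq:boundary-[2,1,1]}. For the contribution of $\Phi_{1,h}^+$ at $z_x^-$ (so $\ell=0$), the residual components $R_{h,i^*}$ with $i\in\{N,NE,NW\}$ involve only values of $\Phi_{1,h}^+$ at $\ell=0$, where $m_+(0)=M-1$. These values are bounded by $C_T\Gamma^{M-1}$, and since $M=h^{-1}$, the quantity $\Gamma^{1/h-1}$ decays faster than any power of $h$; hence it is $O(h)$ and the whole contribution is $O(h^5)$. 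If $\Gamma=0$, it vanishes identically because $M-1\geq1$. The upper wall is handled symmetrically, or by the reflection $\mathscr R_y$ as in Lemma~\ref{lem:tangential-equation}.

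The only mildly delicate point is this cross-wall term: one must verify that $R_h^\pm$ at the first and last lattice rows only sees the opposite-wall profile at distance $M-1$ lattice layers, and that $\sup_{h}h^{-1}\Gamma^{1/h-1}<\infty$. Both follow immediately from the definitions. The uniformity in $t\in[0,T-h^2]$ is inherited from the uniform bounds in the cited propositions, whose constants depend only on $T$ through the smoothness of $\beta_\pm$.
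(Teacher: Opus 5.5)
Your proposal is correct and follows the paper's own argument. The paper likewise applies linearity to $\hat f_h^{[2,2,1]}=\hat f_h^{[2,1,1]}+h^4(\Phi_{1,h}^-+\Phi_{1,h}^+)$ and invokes Propositions~\ref{prop:asymptotics-[2,1,1]}--\ref{prop:interior-Phi1} for both the interior bound and the cancellation of $h^4\beta_\pm\kappa_\pm$; it also notes that the opposite-wall profile contributes only $O(h^4\Gamma^{M-1})$, which is exponentially small since $M=h^{-1}$. Your explicit checks fill in details the paper leaves implicit: that the bounce-back residual only sees values at $\ell=0$ (resp.\ $\ell=M-1$), and how the case $\Gamma=0$ is handled.
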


\section{Proof of the optimal error estimates}\label{sec:convergence}

Having defined the prediction function $\hat{f}_{h}^{[2,2,1]}$, we are now ready to prove Theorem~\ref{thm:optimal_convergence}. We first prove the following.

\begin{proposition}\label{prop:refined_error}
    There exists a constant $C_T>0$ depending only on $T$ such that the prediction function $\hat{f}_h^{[2,2,1]}$ satisfies
    \begin{equation}\label{eq:residual_final}
        \widetilde{S}\hat{f}_{h}^{[2,2,1]}(t_{n+1})
        =
        \mathscr{C}\hat{f}_{h}^{[2,2,1]}(t_n)
        +
        \hat{r}_{h}^{[2,2,1],n}
        +
        \widehat{R}_{h}^{[2,2,1],n}
    \end{equation}
    for some $\hat{r}_{h}^{[2,2,1],n},\widehat{R}_{h}^{[2,2,1],n} \in X_h$ satisfying for $z_{j,\ell} \in \Omega_h$ and $0 \leq n \leq N_h-1$,
    \begin{equation}\label{eq:rR_bounds_final}
        \| \hat{r}_{h}^{[2,2,1],n}(z_{j,\ell}) \| \leq C_T
        \left(
            h^6+h^5 \sum_{\sigma=+,-}\Gamma^{m_{\sigma}(\ell)}
        \right),
        \qquad
        \| \widehat{R}_{h}^{[2,2,1],n}(z_{j,\ell}) \| \leq C_T h^{5}
    \end{equation}
    and
    \begin{equation}\label{eq:support_final}
        \widehat{R}_{h,i}^{[2,2,1],n}(z_{j,\ell})=0 \qquad (\text{for $i \in \{ 0,\ldots,8 \}$ with $z_{j,\ell}+c_i h \in \Omega_h$}).
    \end{equation}
    In particular,
    \begin{equation}\label{eq:final_residual_order_sum}
        \| \hat{r}_{h}^{[2,2,1],n} \|_{X_h}
        +
        \| \widehat{R}_{h}^{[2,2,1],n} \|_{X_h} \leq C_T h^{11/2}.
    \end{equation}
\end{proposition}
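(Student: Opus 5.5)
The plan is to define the two residual pieces directly from the residual operators of Section~\ref{sec:correctors}, evaluated at $t=t_n$:
\begin{equation}
    \hat{r}_{h}^{[2,2,1],n}(z)\coloneqq r_h[\hat f_h^{[2,2,1]}](t_n,z),
    \qquad
    \widehat{R}_{h}^{[2,2,1],n}(z)\coloneqq R_h[\hat f_h^{[2,2,1]}](t_n,z).
\end{equation}
Since $t_n\leq T-h^2$ for $0\leq n\leq N_h-1$, these are well defined. The identity~\eqref{eq:residual_final} is then just the definition~\eqref{def:E} of $\mathcal{E}_h$ together with the splitting~\eqref{eq:E_rR}. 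The support property~\eqref{eq:support_final} is immediate from the definition~\eqref{def:boundary-residual} of $R_h$.

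For the pointwise bounds, I will take the interior estimate directly from~\eqref{eq:interior-[2,2,1]}, using that the norm $\|\cdot\|$ is equivalent to the Euclidean norm. For the boundary term, I first note that a site $z_{j,\ell}$ can have $z_{j,\ell}+c_ih\notin\Omega_h$ only if $\ell=0$ or $\ell=M-1$. Since $M\geq2$, these two layers are distinct, and at such sites the nonzero components of $R_h$ coincide, via the relabeling~\eqref{eq:full_to_partial_residual}, with the components of $R_h^{\pm}$. The bound $\|\widehat R_h^{[2,2,1],n}(z_{j,\ell})\|\leq C_Th^5$ then follows from~\eqref{eq:boundary-[2,2,1]}. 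I should check that the $O(\cdot)$ constants are uniform in $t\in[0,T-h^2]$ and in $x$, which is guaranteed by the uniformity statements in Proposition~\ref{prop:asymptotics-[2,2,1]}. These constants depend only on $T$ (and the fixed $\tau$), through the smoothness of $u,p,v_0,\pi_0,v_1,\pi_1,\alpha_\pm,\beta_\pm$ on $[0,T]$.

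The main step is the $X_h$-norm summation~\eqref{eq:final_residual_order_sum}. For the boundary part, only the $2M=2h^{-1}$ sites in the two wall layers contribute, which gives
\begin{equation}
    \|\widehat R_h^{[2,2,1],n}\|_{X_h}^2\leq 2Mh^2C_T^2h^{10}=2C_T^2h^{11},
\end{equation}
that is, a bound of order $h^{11/2}$. For the interior part, I square the pointwise bound and use $(a+b+c)^2\leq 3(a^2+b^2+c^2)$, so that
\begin{equation}
    \|\hat r_h^{[2,2,1],n}\|_{X_h}^2\leq 3C_T^2\sum_{j,\ell}h^2\Bigl(h^{12}+h^{10}\Gamma^{2m_-(\ell)}+h^{10}\Gamma^{2m_+(\ell)}\Bigr).
\end{equation}
The first term sums to $h^{12}$, since there are $M^2=h^{-2}$ sites. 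For the layer terms, summing over $j$ contributes a factor $M=h^{-1}$, and summing over $\ell$ gives $\sum_{\ell}\Gamma^{2m_\pm(\ell)}\leq (1-\Gamma^2)^{-1}$ because $\Gamma<1$. With the convention $0^0=1$ this also covers the case $\Gamma=0$. The layer terms are therefore bounded by $C h^{11}$.

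Taking square roots gives $\|\hat r_h^{[2,2,1],n}\|_{X_h}\leq C_T(h^6+h^{11/2})\leq C_Th^{11/2}$ for $h\leq1$. Adding the two pieces yields~\eqref{eq:final_residual_order_sum}. I do not expect any genuine obstacle here, since all the work was done in Proposition~\ref{prop:asymptotics-[2,2,1]}. The only point needing care is that the exponentially decaying Knudsen-layer contributions are summed using the geometric series, rather than a crude count of $h^{-2}$ sites, which would only give $h^{4}$.
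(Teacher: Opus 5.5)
Your proposal is correct and follows the paper's proof essentially line by line. The residual pieces are defined as $r_h[\hat f_h^{[2,2,1]}](t_n)$ and $R_h[\hat f_h^{[2,2,1]}](t_n)$, the pointwise bounds are read off from Proposition~\ref{prop:asymptotics-[2,2,1]}, the Knudsen-layer terms are summed with the geometric series $\sum_\ell \Gamma^{2m_\pm(\ell)}\leq (1-\Gamma^2)^{-1}$, and the boundary term uses that only the $2M$ wall sites contribute.

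One small slip in your closing remark: a crude count over all $h^{-2}$ sites gives $\|\hat r_h^{[2,2,1],n}\|_{X_h}=O(h^5)$, not $O(h^4)$. After summing $O(h^{-2})$ time steps this would still yield the $O(h^3)$ population error used later, so the geometric series is needed only for the sharper $h^{11/2}$ stated here.
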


\begin{proof}
    Let
    \[
        \hat r_h^{[2,2,1],n}
        \coloneqq
        r_h[\hat{f}_{h}^{[2,2,1]}](t_n),
        \qquad
        \widehat R_h^{[2,2,1],n}
        \coloneqq
        R_h[\hat f_h^{[2,2,1]}](t_n),
    \] 
    where the operators $r_h$ and $R_h$ are defined by~\eqref{def:interior-residual} and~\eqref{def:boundary-residual}. Then~\eqref{eq:residual_final} follows from~\eqref{def:E} and~\eqref{eq:E_rR}. The support condition~\eqref{eq:support_final} is automatic from the definition~\eqref{def:boundary-residual}. The first inequality in~\eqref{eq:rR_bounds_final} follows from~\eqref{eq:interior-[2,2,1]}. Since $M=h^{-1}$ and $0 \leq \Gamma<1$, we have
    \[
        \sum_{\ell=0}^{M-1}\Gamma^{2m_\pm(\ell)}
        \leq
        \frac{1}{1-\Gamma^2}.
    \]
    Hence
    \begin{align}\label{eq:final_residual_r}
        \|
            \hat r_h^{[2,2,1],n}
        \|_{X_h}
        & \leq
        C_T h^6
        +
        C_T h^5
        \left(
            h^2 M
            \sum_{\ell=0}^{M-1}
            \Gamma^{2\ell}
        \right)^{1/2}
        \\
        & \leq
        C_T h^{11/2}.
    \end{align}
    On the other hand, the second inequality in~\eqref{eq:rR_bounds_final} follows from~\eqref{eq:boundary-[2,2,1]}. Since $\widehat{R}_h^{[2,2,1]}$ is supported only on the first and the last lattice layers, which contain $O(h^{-1})$ lattice sites, we obtain
    \begin{align}\label{eq:final_residual_R}
        \|
            \widehat R_h^{[2,2,1],n}
        \|_{X_h}
        \leq
        C_T h^{11/2}.
    \end{align}
    Combining~\eqref{eq:final_residual_r} and~\eqref{eq:final_residual_R}, we obtain~\eqref{eq:final_residual_order_sum}.
\end{proof}

Combining Proposition~\ref{prop:refined_error} and the $L^2$-stability estimate in Proposition~\ref{prop:stability}, we obtain the following.

\begin{proposition}\label{prop:population-error-refined}
    Let $f_h^n$ be the solution to the LBM scheme
    \eqref{eq:LBM-compact}. Then there exists a constant $C_T>0$ depending only on $T$ such that
    \begin{equation}\label{eq:population-error-refined}
        \|
            f_h^n-\hat f_h^{[2,2,1]}(t_n)
        \|_{X_h}
        \leq
        \|
            f_h^0-\hat f_h^{[2,2,1]}(0)
        \|_{X_h}
        +
        C_T h^{7/2}
    \end{equation}
    for $0\leq n\leq N_h$. In particular, if $f_h^0$ is a third-order initialization in the sense of Definition~\ref{def:3rd-initialization}, then
    \begin{equation}\label{eq:population-error-refined-h3}
        \|
            f_h^n-\hat f_h^{[2,2,1]}(t_n)
        \|_{X_h}
        \leq
        C_T h^3
    \end{equation}
    for $0\leq n\leq N_h$.
\end{proposition}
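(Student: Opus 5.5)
The plan follows the proof of Theorem~\ref{thm:Junk_convergence} verbatim, now using Proposition~\ref{prop:refined_error} in place of Proposition~\ref{prop:Junk_error}. Define $e_h^n \coloneqq f_h^n-\hat f_h^{[2,2,1]}(t_n)$ and $A_h\coloneqq \widetilde S^{-1}\mathscr C$. By Proposition~\ref{prop:stability}, $\|\widetilde S^{-1}\|_{X_h\to X_h}=1$ and $\|A_h\|_{X_h\to X_h}\le 1$. We have $f_h^{n+1}=A_hf_h^n$ by~\eqref{eq:LBM-compact}. Applying $\widetilde S^{-1}$ to~\eqref{eq:residual_final} gives
\[
\hat f_h^{[2,2,1]}(t_{n+1})=A_h\hat f_h^{[2,2,1]}(t_n)+\widetilde S^{-1}\bigl(\hat r_h^{[2,2,1],n}+\widehat R_h^{[2,2,1],n}\bigr).
\]
Subtracting yields $e_h^{n+1}=A_he_h^n-\widetilde S^{-1}(\hat r_h^{[2,2,1],n}+\widehat R_h^{[2,2,1],n})$. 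Duhamel's formula and the contraction bounds then give
\[
\|e_h^n\|_{X_h}\le \|e_h^0\|_{X_h}+\sum_{k=0}^{n-1}\bigl(\|\hat r_h^{[2,2,1],k}\|_{X_h}+\|\widehat R_h^{[2,2,1],k}\|_{X_h}\bigr).
\]
By~\eqref{eq:final_residual_order_sum}, each summand is at most $C_Th^{11/2}$. Since $n\le N_h\le T h^{-2}$, the sum is at most $C_T T h^{7/2}$, which proves~\eqref{eq:population-error-refined}.

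For~\eqref{eq:population-error-refined-h3}, the triangle inequality gives
\[
\|f_h^0-\hat f_h^{[2,2,1]}(0)\|_{X_h}\le \|f_h^0-\hat f_h(0)\|_{X_h}+\|\hat f_h(0)-\hat f_h^{[2,2,1]}(0)\|_{X_h}.
\]
The first term is $O(h^3)$ by Definition~\ref{def:3rd-initialization}. For the second term, I expand the difference using~\eqref{eq:corrected_pred_final}. Affine linearity of $\mathcal P_h$ gives
\[
\hat f_h^{[2,2,1]}-\hat f_h=h^2(\mathcal P_h[v_0,\pi_0]-F^{\mathrm{eq}}(1,0))+h^3(\mathcal P_h[v_1,\pi_1]-F^{\mathrm{eq}}(1,0))+h^3(\Phi_{0,h}^-+\Phi_{0,h}^+)+h^4(\cdots).
\]
Here $\mathcal P_h[v,\pi]-F^{\mathrm{eq}}(1,0)=hF^{\mathrm{eq}}(0,v)+O(h^2)$ pointwise, so the Stokes corrector terms are $O(h^3)$ pointwise. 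The Knudsen terms are $O(h^3\Gamma^{m_\pm(\ell)})$ pointwise, so their $X_h$-norm is even smaller, namely $O(h^{7/2})$ by the geometric sum used in~\eqref{eq:final_residual_r}. Since $\|\cdot\|_{X_h}$ of a pointwise $O(h^3)$ function is $O(h^3)$ (the total weight $h^2\cdot h^{-2}$ is one), the second term is $O(h^3)$. Combined with $h^{7/2}\le h^3$, this gives~\eqref{eq:population-error-refined-h3}.

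The only point needing care, and the main potential obstacle, is the uniform pointwise boundedness of the correctors at $t=0$. This needs the $C^k$ bounds on $(v_0,\pi_0)$, $(v_1,\pi_1)$ and the derived $(W,Q)$ fields entering $\mathcal P_h$, which rely on the compatibility assumptions imposed on their initial data. I will take these smoothness bounds as given from the construction in Section~\ref{sec:correctors}, so that all constants depend only on $T$.
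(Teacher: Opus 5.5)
Your proposal is correct and follows the paper's proof essentially step for step. You use the same error recursion $e_h^{n+1}=A_he_h^n-\widetilde S^{-1}(\hat r_h^{[2,2,1],n}+\widehat R_h^{[2,2,1],n})$, the same Duhamel summation of the $C_Th^{11/2}$ residuals over $O(h^{-2})$ steps, and the same triangle inequality with the expansion of $\hat f_h^{[2,2,1]}(0)-\hat f_h(0)$ into Stokes-corrector and Knudsen-layer contributions that are pointwise $O(h^3)$. Your sharper observation that the Knudsen terms contribute only $O(h^{7/2})$ in $X_h$ is valid but not needed.
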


\begin{proof}
    Define $e_{h}^{n} \in X_h$ for $0 \leq n \leq N_h$ by
    \begin{equation}
        e_{h}^{n}
        \coloneqq
        f_{h}^{n}-\hat{f}_{h}^{[2,2,1]}(t_n).
    \end{equation}
    By abuse of notation, we shall use the same symbol $e_{h}^{n}$ as in the proof of Theorem~\ref{thm:Junk_convergence}. This shall cause no confusion. Recall that we write
    \begin{equation}
        A_h
        \coloneqq
        \widetilde{S}^{-1} \mathscr C.
    \end{equation}
    By Proposition~\ref{prop:stability}, we have
    \begin{equation}\label{eq:stability_our_theorem}
        \| \widetilde{S}^{-1} \|_{X_h \to X_h}=1,
        \qquad
        \| A_h \|_{X_h \to X_h} \leq 1.
    \end{equation}
    By~\eqref{eq:LBM-compact} and~\eqref{eq:residual_final}, we have
    \begin{equation}
        e_{h}^{n+1}=A_h e_{h}^{n}
        -
        \widetilde S^{-1} \hat{r}_{h}^{[2,2,1],n}
        -
        \widetilde S^{-1} \widehat{R}_{h}^{[2,2,1],n}.
    \end{equation}
    Hence by Duhamel's principle,
    \begin{equation}
        e_{h}^{n}
        =
        (A_h)^n e_{h}^{0}
        -
        \sum_{k=0}^{n-1}(A_{h})^{n-k-1}\widetilde{S}^{-1}\hat{r}_{h}^{[2,2,1],k}
        -
        \sum_{k=0}^{n-1}(A_{h})^{n-k-1}\widetilde{S}^{-1}\widehat{R}_{h}^{[2,2,1],k}
    \end{equation}
    for $1 \leq n \leq N_h$. From~\eqref{eq:final_residual_order_sum} and~\eqref{eq:stability_our_theorem}, it follows that
    \begin{align}
        \| e_{h}^{n} \|_{X_h}
        & \leq
        \| e_{h}^{0} \|_{X_h}
        +
        \sum_{k=0}^{n-1}\| \hat{r}_{h}^{[2,2,1],k} \|_{X_h}
        +
        \sum_{k=0}^{n-1}\| \widehat{R}_{h}^{[2,2,1],k} \|_{X_h} \\
        & \leq 
        \| f_{h}^{0}-\hat{f}_{h}^{[2,2,1]}(0) \|_{X_h}
        +
        \sum_{k=0}^{n-1}C_T h^{11/2} \\
        & \leq
        \| f_{h}^{0}-\hat{f}_{h}^{[2,2,1]}(0) \|_{X_h}
        +C_T h^{7/2}.
    \end{align}
    This proves~\eqref{eq:population-error-refined}.
    
    It remains to estimate the initial error. Let us assume $\gamma \neq 0$ as the case of $\gamma=0$ is easier. By~\eqref{eq:corrected_pred_final}, we have
    \begin{align}
        \hat{f}_{h}^{[2,2,1]}
        -
        \hat{f}_{h}
        & =
        \mathcal P_h[u^{[2]},p^{[2]}] - \mathcal P_h[u,p]
        +
        h^3
        \left(
            \Phi_{0,h}^-+\Phi_{0,h}^+
        \right)
        \\
        & \qquad
        +
        h^4
        \left(
            \Phi_{1,\mathrm{tan},h}^-
            +
            \Phi_{1,\mathrm{tan},h}^+
            +
            \Phi_{1,h}^-
            +
            \Phi_{1,h}^+
        \right)
        \\
        & =
        h^2 \left( \mathcal P_h[v_0+hv_1,\pi_0+h\pi_1] - F^{\mathrm{eq}}(1,0) \right)
        +
        h^3
        \left(
            \Phi_{0,h}^-+\Phi_{0,h}^+
        \right)
        \\
        & \qquad
        +
        h^4
        \left(
            \Phi_{1,\mathrm{tan},h}^-
            +
            \Phi_{1,\mathrm{tan},h}^+
            +
            \Phi_{1,h}^-
            +
            \Phi_{1,h}^+
        \right).
    \end{align}
    Hence, noting that the prediction operator $\mathcal{P}_h$ is defined by~\eqref{eq:Junk_prediction} and~\eqref{def:P}, we obtain
    \begin{equation}
        \| \hat{f}_{h}^{[2,2,1]}(0) - \hat{f}_{h}(0) \|_{X_h}
        \leq
        C_T h^3.
    \end{equation}
    Therefore, if $f_{h}^{0}$ is a third-order initialization,
    \begin{equation}
        \| f_{h}^{0}-\hat{f}_{h}^{[2,2,1]}(0) \|_{X_h}
        \leq 
        \| f_{h}^{0}-\hat{f}_{h}(0) \|_{X_h}
        +
        \| \hat{f}_{h}(0)-\hat{f}_{h}^{[2,2,1]}(0) \|_{X_h}
        \leq
        C_T h^3.
    \end{equation}
    Combining this with~\eqref{eq:population-error-refined} proves~\eqref{eq:population-error-refined-h3}.
\end{proof}

We finally prove Theorem~\ref{thm:optimal_convergence}.

\begin{proof}[Proof of Theorem~\ref{thm:optimal_convergence}]
    Set
    \begin{equation}
        \widehat{U}_{h}^{[2,2,1]}
        \coloneqq
        \frac{1}{h}j_{\hat{f}_{h}^{[2,2,1]}},
        \qquad
        \widehat{P}_{h}^{[2,2,1]}
        \coloneqq
        \frac{1}{3h^2}
        \left(
            \rho_{\hat{f}_{h}^{[2,2,1]}} - 1
        \right).
    \end{equation}
    Let us assume $\gamma \neq 0$ as the case of $\gamma = 0$ is easier. By~\eqref{eq:corrected_pred_final}, we have
    \begin{equation}
        \widehat{U}_{h}^{[2,2,1]}
        =
        \frac{1}{h} j_{\mathcal{P}_{h}[u^{[2]},p^{[2]}]}
        +
        O(h^2),
        \qquad
        \widehat{P}_{h}^{[2,2,1]}
        =
        \frac{1}{3h^2}
        \left(
            \rho_{\mathcal{P}_{h}[u^{[2]},p^{[2]}]} - 1
        \right)
        +
        O(h).
    \end{equation}
    Applying~\eqref{eq:Ph_moments} to the pair $(u^{[2]},p^{[2]})$, we obtain
    \begin{align}
        \frac{1}{h} j_{\mathcal{P}_{h}[u^{[2]},p^{[2]}]}
        & =
        u^{[2]}
        +
        O(h^2)
        =
        u
        +
        O(h^2),
        \\
        \frac{1}{3h^2}
        \left(
            \rho_{\mathcal{P}_{h}[u^{[2]},p^{[2]}]} - 1
        \right)
        & =
        p^{[2]}
        +
        O(h^2)
        =
        p
        +
        O(h^2).
    \end{align}
    From these, it follows that
    \begin{equation}\label{eq:corrected-moment-error}
        \| \widehat{U}^{[2,2,1]}(t) - u(t) \|_{\ell_{h}^{2}}
        \leq
        C_T h^2,
        \qquad
        \| \widehat{P}^{[2,2,1]}(t) - p(t) \|_{\ell_{h}^{2}}
        \leq
        C_T h.
    \end{equation}
    for $t \in [0,T]$. Note that by the Cauchy--Schwarz inequality and the definition of the weighted norm in~\eqref{def:f-norm}, there exists a constant $C>0$ such that
    \begin{equation}
        \| j_f \|_{\ell_{h}^{2}} \leq C \| f \|_{X_h},
        \qquad
        \| \rho_f \|_{\ell_{h}^{2}} \leq C \| f \|_{X_h}
    \end{equation}
    for every $f \in X_h$. Applying these inequalities to
    \begin{equation}
        F = f_{h}^{n} - \hat{f}_{h}^{[2,2,1]}(t_n)
    \end{equation}
    and using Proposition~\ref{prop:population-error-refined}, we obtain
    \begin{equation}\label{eq:numerical-corrected-error}
        \| U_{h}^{n} - \widehat{U}_{h}^{[2,2,1]}(t_n) \|_{\ell_{h}^{2}}
        \leq
        C_T h^2,
        \qquad
        \| P_{h}^{n} - \widehat{P}_{h}^{[2,2,1]}(t_n) \|_{\ell_{h}^{2}}
        \leq
        C_T h
    \end{equation}
    for $0 \leq n \leq N_h$. Here, we used the fact that
    \begin{equation}
        \rho_F
        =
        \rho_{f_{h}^{n}} - \rho_{\hat{f}_{h}^{[2,2,1]}(t_n)},
        \qquad
        j_F
        =
        j_{f_{h}^{n}} - j_{\hat{f}_{h}^{[2,2,1]}(t_n)}.
    \end{equation}
    Combining~\eqref{eq:corrected-moment-error} and~\eqref{eq:numerical-corrected-error}, we conclude that
    \begin{equation}
        \| U_{h}^{n} - u(t_n) \|_{\ell_{h}^{2}}
        \leq
        C_T h^2,
        \qquad
        \| P_{h}^{n} - p(t_n) \|_{\ell_{h}^{2}}
        \leq
        C_T h
    \end{equation}
    for $0 \leq n \leq N_h$. This proves Theorem~\ref{thm:optimal_convergence}.
\end{proof}

\section{Concluding remarks}\label{sec:conclusion}

We have proved optimal convergence rates for the D2Q9 BGK lattice Boltzmann method with the half-way bounce-back rule for the incompressible Stokes equations in a flat channel. The convergence rates are second-order and first-order for the velocity and the pressure, in agreement with formal asymptotic analyses and numerical experiments. The main point of the analysis is the refinement of the prediction function constructed in previous works by adding Stokes correctors and Knudsen layers. The correctors are designed to absorb the leading boundary consistency errors of the previously used prediction function. The refined prediction function has a pointwise boundary consistency error of $O(h^5)$, while the interior consistency error is of sufficiently high order. Combined with the weighted $L^2$-stability estimate, these consistency estimates yield the optimal convergence rates.

Several extensions of the present result seem natural. The first is to consider non-half-way configurations. In this case, formal asymptotic analysis predicts first-order accuracy for the velocity~\cite{JunkYang2005}, whereas the convergence analysis of Junk and Yang~\cite{JunkYang2008,JunkYang2009} does not establish convergence for generic non-half-way configurations. The technique developed here suggests a possible route to prove the optimal first-order convergence for the velocity. In the non-half-way case, the leading-order terms in the asymptotic expansion of the boundary errors appear at a lower order than in the half-way case, and their macroscopic and kinetic components have to be identified to construct Stokes and Knudsen layer correctors.

Another direction is to extend our technique to more general collision operators and to the nonlinear Navier--Stokes setting. The convergence theorems in~\cite{JunkYang2008,JunkYang2009} include collision operators of general relaxation type and~\cite{JunkYang2009} considers the Navier--Stokes equations. For collision operators of general relaxation type, the construction of Stokes and Knudsen layer correctors would become more involved and require further analysis. For the nonlinear setting, it seems plausible that an extension of the present corrector construction combined with the perturbative argument in~\cite{JunkYang2009} could yield optimal convergence estimates for the nonlinear Navier--Stokes equations. This is left for future work.

Finally, extending the present analysis to domains with curved boundaries is also of great importance. We expect that the construction of the Knudsen layer correctors needs to incorporate the curvature of the boundaries as in the higher-order Knudsen layers in the kinetic theory of gases; see for example~\cite{Sone2002,HattoriTakata2015b}. Thus understanding the effect of boundary geometry on the bounce-back rule is an interesting future direction.

\backmatter

\bmhead{Acknowledgements}
The author has been supported by JSPS KAKENHI Grant Number 25K07077.

\section*{Statements and Declarations}

\noindent \textbf{Competing interests.}
The author declares no competing interests.

\noindent \textbf{Data availability.}
No datasets were generated or analyzed during the current study.

\noindent \textbf{Use of generative AI.}
During the development of this work, the author used OpenAI's ChatGPT as an interactive research tool for exploring and refining proof strategies, checking intermediate arguments and calculations, assisting with literature searches, and drafting or revising parts of the exposition. Mathematical content arising from these interactions was critically examined, independently verified, and revised by the author before inclusion in the manuscript. The author takes full responsibility for all mathematical claims and for the final form of the manuscript.

\bibliography{main.bib}

\end{document}